\newcommand{\klockan}{\the\hours:{\ifnum\minutes<10 0\fi}\the\minutes}
\newcommand{\tid}{\today\ \klockan}
\newcommand{\prtid}{\smash{\raise 10mm \hbox{\LaTeX ed \tid}}}
\renewcommand{\prtid}{}
\def\sectionmark#1{} 
\def\subsectionmark#1{}
\newcommand{\sectnr}{\ifnum \c@secnumdepth >\z@
                 \thesection.\hskip 1em\relax \fi}
\def\@evenhead{\footnotesize\rm\thepage\hfil\leftmark\hfil\llap{\prtid}}
\def\@oddhead{\footnotesize\rm\rlap{\prtid}\hfil\rightmark\hfil\thepage}
\def\tableofcontents{\section*{Contents} 
 \@starttoc{toc}}
\def\@biblabel#1{#1.}
\let\Thebibliography=\thebibliography
\renewcommand{\thebibliography}[1]{\def\@mkboth##1##2{}\Thebibliography{#1}
\addcontentsline{toc}{section}{References}
\frenchspacing 
\setlength{\@topsep}{0pt}
\setlength{\itemsep}{0pt}%
\setlength{\parskip}{0pt plus 2pt}%
}
\def\mdots@{\mathinner.\nonscript\!.%
 \ifx\next,.\else\ifx\next;.\else\ifx\next..\else
 \nonscript\!\mathinner.\fi\fi\fi}
\let\ldots\mdots@
\let\cdots\mdots@
\let\dotso\mdots@
\let\dotsb\mdots@
\let\dotsm\mdots@
\let\dotsc\mdots@
\def\vdots{\vbox{\baselineskip2.8\p@ \lineskiplimit\z@
    \kern6\p@\hbox{.}\hbox{.}\hbox{.}\kern3\p@}}
\def\ddots{\mathinner{\mkern1mu\raise8.6\p@\vbox{\kern7\p@\hbox{.}}%
    \raise5.8\p@\hbox{.}\raise3\p@\hbox{.}\mkern1mu}}
\let\Enumerate=\enumerate
\renewcommand{\enumerate}{\Enumerate%
\setlength{\@topsep}{0pt}
\setlength{\itemsep}{0pt}%
\setlength{\parskip}{0pt plus 1pt}%
\renewcommand{\theenumi}{\textup{(\alph{enumi})}}%
\renewcommand{\labelenumi}{\theenumi}%
}
\let\endEnumerate=\endenumerate
\renewcommand{\endenumerate}{\endEnumerate\unskip}
\def\@seccntformat#1{\csname the#1\endcsname.\quad}
\newcommand{\authortitle}[2]{\author{#1}\title{#2}\markboth{#1}{#2}}
\newcommand{\auth}[2]{{#1, #2.}}
\newcommand{\art}[6]{{\sc #1, \rm #2, \it #3 \bf #4 \rm (#5), \mbox{#6}.}}
\newcommand{\artprep}[3]{{\sc #1, \rm #2, \rm #3.}}
\newcommand{\artin}[3]{{\sc #1, \rm #2,  in #3.}}
\newcommand{\book}[3]{{\sc #1, \it #2, \rm #3.}}
\newcommand{\AND}{{\rm and }}
\newtheoremstyle{descriptive}%
  {\topsep}   
  {\topsep}   
  {\rmfamily} 
  {}          
  {\bfseries} 
  {.}         
  { }         
  {}          
\newtheoremstyle{propositional}%
  {\topsep}   
  {\topsep}   
  {\itshape}  
  {}          
  {\bfseries} 
  {.}         
  { }         
  {}          
\theoremstyle{propositional}
\newtheorem{thm}{Theorem}[section]
\newtheorem{prop}[thm]{Proposition}
\newtheorem{lem}[thm]{Lemma}
\newtheorem{cor}[thm]{Corollary}
\theoremstyle{descriptive}
\newtheorem{deff}[thm]{Definition}
\newtheorem{example}[thm]{Example}
\newtheorem{remark}[thm]{Remark}
\renewenvironment{proof}[1][\proofname]{\par
  \pushQED{\qed}%
  \normalfont 
  \trivlist
  \item[\hskip\labelsep
        \itshape
    #1\@addpunct{.}]\ignorespaces
}{%
  \popQED\endtrivlist\@endpefalse
}
\newcommand{\setm}{\setminus}
\renewcommand{\subsetneq}{\varsubsetneq}
\renewcommand{\emptyset}{\varnothing}
\def\vint{\mathop{\mathchoice%
          {\setbox0\hbox{$\displaystyle\intop$}\kern 0.22\wd0%
           \vcenter{\hrule width 0.6\wd0}\kern -0.82\wd0}%
          {\setbox0\hbox{$\textstyle\intop$}\kern 0.2\wd0%
           \vcenter{\hrule width 0.6\wd0}\kern -0.8\wd0}%
          {\setbox0\hbox{$\scriptstyle\intop$}\kern 0.2\wd0%
           \vcenter{\hrule width 0.6\wd0}\kern -0.8\wd0}%
          {\setbox0\hbox{$\scriptscriptstyle\intop$}\kern 0.2\wd0%
           \vcenter{\hrule width 0.6\wd0}\kern -0.8\wd0}}%
          \mathopen{}\int}
\newcommand{\Cp}{{C_p}}
\newcommand{\CpXeps}{C_p^{\clXeps}}
\newcommand{\CpXone}{C_p^{\clXone}}
\DeclareMathOperator{\diam}{diam}
\DeclareMathOperator{\capp}{cap}
\DeclareMathOperator{\Capp}{Cap}
\newcommand{\cp}{\capp_p}
\newcommand{\cpY}{\capp_p^Y}
\newcommand{\cpXeps}{\capp_p^{\clXeps}}
\DeclareMathOperator{\dist}{dist}
\DeclareMathOperator{\Euc}{Euc}
\DeclareMathOperator{\sgn}{sign}
\DeclareMathOperator{\inner}{in}
\DeclareMathOperator{\Lip}{Lip}
\DeclareMathOperator{\spt}{supp}
\newcommand{\supp}{\spt}
\DeclareMathOperator*{\osc}{osc}
\newcommand{\bdry}{\partial}
\newcommand{\bdy}{\bdry}
\newcommand{\loc}{_{\rm loc}}
\newcommand{\Lone}{\mathcal{L}^1}
\newcommand{\Ltwo}{\mathcal{L}^2}
\newcommand{\be}{\beta}
\newcommand{\simge}{\gtrsim}
\newcommand{\simle}{\lesssim}
\newcommand{\CPI}{C_{\rm PI}}
{\catcode`p =12 \catcode`t =12 \gdef\eeaa#1pt{#1}}      
\def\accentadjtext#1{\setbox0\hbox{$#1$}\kern   
                \expandafter\eeaa\the\fontdimen1\textfont1 \ht0 }
\def\accentadjscript#1{\setbox0\hbox{$#1$}\kern 
                \expandafter\eeaa\the\fontdimen1\scriptfont1 \ht0 }
\def\accentadjscriptscript#1{\setbox0\hbox{$#1$}\kern   
                \expandafter\eeaa\the\fontdimen1\scriptscriptfont1 \ht0 }
\def\accentadjtextback#1{\setbox0\hbox{$#1$}\kern       
                -\expandafter\eeaa\the\fontdimen1\textfont1 \ht0 }
\def\accentadjscriptback#1{\setbox0\hbox{$#1$}\kern     
                -\expandafter\eeaa\the\fontdimen1\scriptfont1 \ht0 }
\def\accentadjscriptscriptback#1{\setbox0\hbox{$#1$}\kern 
                -\expandafter\eeaa\the\fontdimen1\scriptscriptfont1 \ht0 }
\def\itoverline#1{{\mathsurround0pt\mathchoice
        {\rlap{$\accentadjtext{\displaystyle #1}
                \accentadjtext{\vrule height1.593pt}
                \overline{\phantom{\displaystyle #1}
                \accentadjtextback{\displaystyle #1}}$}{#1}}
        {\rlap{$\accentadjtext{\textstyle #1}
                \accentadjtext{\vrule height1.593pt}
                \overline{\phantom{\textstyle #1}
                \accentadjtextback{\textstyle #1}}$}{#1}}
        {\rlap{$\accentadjscript{\scriptstyle #1}
                \accentadjscript{\vrule height1.593pt}
                \overline{\phantom{\scriptstyle #1}
                \accentadjscriptback{\scriptstyle #1}}$}{#1}}
        {\rlap{$\accentadjscriptscript{\scriptscriptstyle #1}
                \accentadjscriptscript{\vrule height1.593pt}
                \overline{\phantom{\scriptscriptstyle #1}
                \accentadjscriptscriptback{\scriptscriptstyle #1}}$}{#1}}}}
\newcommand{\al}{\alpha}
\newcommand{\alp}{\alpha}
\newcommand{\ga}{\gamma}
\newcommand{\de}{\delta}
\newcommand{\eps}{\varepsilon}
\newcommand{\la}{\lambda}
\newcommand{\La}{\Lambda}
\newcommand{\sig}{\sigma}
\newcommand{\Om}{\Omega}
\newcommand{\gah}{\hat{\ga}}
\newcommand{\xhat}{\hat{x}}
\newcommand{\yhat}{\hat{y}}
\newcommand{\clOm}{{\overline{\Om}}}
\renewcommand{\phi}{\varphi}
\newcommand{\p}{{$p\mspace{1mu}$}}   
\newcommand{\xij}{x_{i,j}}
\newcommand{\Bij}{{B_{i,j}}}
\newcommand{\Bio}{{B_{i,0}}}
\newcommand{\Bxi}{{B_{x_i}}}
\newcommand{\Bxip}{{B'_{x_i}}}
\newcommand{\B}{{\cal B}}
\newcommand{\R}{\mathbf{R}}
\newcommand{\Sphere}{\mathbf{S}}
\newcommand{\clX}{\itoverline{X}}
\newcommand{\limplus}{{\mathchoice{\vcenter{\hbox{$\scriptstyle +$}}}
  {\vcenter{\hbox{$\scriptstyle +$}}}
  {\vcenter{\hbox{$\scriptscriptstyle +$}}}
  {\vcenter{\hbox{$\scriptscriptstyle +$}}}
}}
\newcommand{\limminus}{{\mathchoice{\vcenter{\hbox{$\scriptstyle -$}}}
  {\vcenter{\hbox{$\scriptstyle -$}}}
  {\vcenter{\hbox{$\scriptscriptstyle -$}}}
  {\vcenter{\hbox{$\scriptscriptstyle -$}}}
}}
\newcommand{\limpm}{{\mathchoice{\vcenter{\hbox{$\scriptstyle \pm$}}}
  {\vcenter{\hbox{$\scriptstyle \pm$}}}
  {\vcenter{\hbox{$\scriptscriptstyle \pm$}}}
  {\vcenter{\hbox{$\scriptscriptstyle \pm$}}}
}}
\newcommand{\Np}{N^{1,p}}
\newcommand{\Nploc}{N^{1,p}\loc}
\newcommand{\gat}{\tilde{\ga}}
\newcommand{\dt}{\tilde{d}}
\newcommand{\Omt}{\widetilde{\Om}}
\newcommand{\xt}{\tilde{x}}
\newcommand{\yt}{\tilde{y}}
\newcommand{\din}{d_{\inner}}
\newcommand{\Bin}{B_{\inner}}
\newcommand{\clXeps}{\itoverline{X}_\eps}
\newcommand{\clXone}{\itoverline{X}_1}
\newcommand{\mub}{\mu_\beta}
\newcommand{\Ga}{\Gamma}
\newcommand{\Lploc}{L^p\loc}
\newcommand{\CpY}{{C_p^Y}}
\newcommand{\zpm}{z_\limpm}
\newcommand{\zplus}{z_\limplus}
\newcommand{\zminus}{z_\limminus}
\newcommand{\ghat}{\hat{g}}
\numberwithin{equation}{section}
\newcommand{\imp}{\mathchoice{\quad \Longrightarrow \quad}{\Rightarrow}
                {\Rightarrow}{\Rightarrow}}
\newenvironment{ack}{\medskip{\it Acknowledgement.}}{}
\begin{document}

\authortitle{Anders Bj\"orn, Jana Bj\"orn
	and Nageswari Shanmugalingam}
            {Bounded geometry and \p-harmonic functions under uniformization
            and hyperbolization}
\author{
Anders Bj\"orn \\
\it\small Department of Mathematics, Link\"oping University, \\
\it\small SE-581 83 Link\"oping, Sweden\/{\rm ;}
\it \small anders.bjorn@liu.se
\\
\\
Jana Bj\"orn \\
\it\small Department of Mathematics, Link\"oping University, \\
\it\small SE-581 83 Link\"oping, Sweden\/{\rm ;}
\it \small jana.bjorn@liu.se
\\
\\
Nageswari Shanmugalingam 
\\
\it \small  Department of Mathematical Sciences, University of Cincinnati, \\
\it \small  P.O.\ Box 210025, Cincinnati, OH 45221-0025, U.S.A.\/{\rm ;}
\it \small shanmun@uc.edu
}

\date{}
\maketitle

\begin{center}
Dedicated to Pekka Koskela and the late Juha Heinonen on their 59th birthdays.
\end{center}

\noindent{\small
{\bf Abstract}. 
The uniformization and hyperbolization transformations formulated by
Bonk, Heinonen and Koskela in
\emph{``Uniformizing Gromov Hyperbolic Spaces''},
    Ast\'erisque {\bf 270} (2001), 
dealt with geometric properties of metric spaces. 
In this paper we
consider metric measure spaces  and construct a parallel
transformation of measures 
under the uniformization and hyperbolization procedures.
We show that if a locally compact roughly starlike Gromov hyperbolic space
is equipped with a measure that is uniformly locally doubling and
supports a uniformly local \p-Poincar\'e inequality, then
the transformed measure  is globally
doubling and supports a global \p-Poincar\'e inequality
on the corresponding uniformized space. 
In the opposite direction, we show that such global properties 
on bounded locally compact uniform spaces
yield similar uniformly local properties 
for the transformed measures on the corresponding hyperbolized spaces.

We use the above results on uniformization  of measures to
characterize when a Gromov hyperbolic space, equipped 
with a uniformly locally doubling measure supporting a
uniformly local \p-Poincar\'e
inequality, carries  nonconstant globally defined \p-harmonic functions 
with finite \p-energy.

We also study some geometric properties of Gromov hyperbolic and
uniform spaces.
While the
Cartesian product of two Gromov hyperbolic spaces need not be Gromov
hyperbolic, we construct an indirect product of 
such spaces that does result in a Gromov hyperbolic space.
This is done by first showing
that the Cartesian product of two bounded uniform domains is a uniform
domain. 

}

\bigskip
\noindent
    {\small \emph{Key words and phrases}:
      bounded geometry,
      doubling measure,
      finite-energy Liouville theorem,
  Gromov hyperbolic space,
  hyperbolization,
  metric space,
  \p-harmonic function,
  Poincar\'e inequality,
quasihyperbolic metric,
  uniform space,
  uniformization.
}

\medskip
\noindent
    {\small Mathematics Subject Classification (2000):
      Primary: 53C23; Secondary: 30F10, 30L10, 30L99, 31E05.
}

\section{Introduction}

Studies of metric space geometry usually consider two types of
synthetic (i.e.\ axiomatic) negative curvature conditions: Alexandrov
curvature (known as CAT($-1$) spaces) and Gromov hyperbolicity.  While
the Alexandrov condition governs both small and large scale
behavior of triangles,
the Gromov hyperbolicity
governs only the large scale behavior. 
As such, the Gromov hyperbolicity 
was eminently
suited to the study of hyperbolic groups, see e.g.\ 
Gromov~\cite{Gro}, Coornaert--Delzant--Papa\-do\-pou\-los~\cite{CDP} and
Ghys--de la Harpe~\cite{GdH}, while Bridson--Haefliger~\cite{BridHaef}
gives an excellent overview of both notions of curvature. 

Since the
ground-breaking work of Gromov~\cite{Gro}, the notion of Gromov
hyperbolicity has found applications in other parts of metric space
analysis as well.  
In~\cite[Theorem~1.1]{BHK-Unif}, Bonk, Heinonen and Koskela gave
a link between quasiisometry classes of locally compact roughly
starlike Gromov hyperbolic spaces and
quasisimilarity classes of locally compact bounded uniform spaces. 
In Buyalo--Schroeder~\cite{BuSch} it was shown that every complete
bounded doubling metric space is the visual boundary of a Gromov
hyperbolic space, see also Bonk--Saksman~\cite{BS}.

While none of the above mentioned studies, involving Gromov hyperbolic spaces
and uniform domains, considered how measures transform on such spaces
(see also e.g.\  Buckley--Herron--Xie~\cite{BHX} and
Herron--Shanmugalingam--Xie~\cite{HSX}),
analytic studies on metric spaces require measures as well. 
Although~\cite{BS} does consider function
spaces on certain Gromov hyperbolic spaces, called hyperbolic
fillings, these function spaces are associated with just the counting
measure on the vertices of such hyperbolic fillings and so do not lend
themselves 
to more general Gromov hyperbolic spaces.
Similar studies were undertaken in
Bonk--Saksman--Soto~\cite{BSS} and
Bj\"orn--Bj\"orn--Gill--Shanmugalingam~\cite{BBGS}.

In this paper we
seek to remedy this gap in the literature on analysis in Gromov
hyperbolic spaces.  Thus the primary focus of this paper
is to construct transformations of measures under the uniformization and
hyperbolization procedures, and to demonstrate how analytic
properties of the measure 
are preserved by them.
The analytic properties of
interest here are the doubling property and the Poincar\'e inequality,
assumed either globally on the uniform spaces, or uniformly locally
(i.e.\ for balls up to some fixed radius) on
the Gromov hyperbolic spaces.
As trees are the quintessential
models of Gromov hyperbolic spaces, the results in this paper are
motivated in part by the results in~\cite{BBGS}.

The following is our main result,
combining Theorems~\ref{thm-comp-largest-Whitney}
and~\ref{thm-PI-Xeps}.
Here, $z_0\in X$ is a fixed uniformization center and $\eps_0(\de)$ is
as in Bonk--Heinonen--Koskela~\cite{BHK-Unif}, see
later sections for relevant definitions.

\begin{thm} \label{thm-main}
Assume that
$(X,d)$ is a locally compact roughly starlike Gromov $\de$-hyperbolic space
  equipped with a measure
$\mu$ which is doubling on $X$ for balls of radii at most $R_0$,
with a doubling constant $C_d$.
Let $X_\eps=(X,d_\eps)$ be the uniformization of $X$ given for
  $0<\eps\le\eps_0(\de)$ by
\[
d_\eps(x,y) = \inf_\ga \int_\ga e^{-\eps d(\cdot,z_0)}\,ds,
\]
with the infimum taken over all
rectifiable curves $\ga$ in $X$ joining $x$ to $y$.
Also let
\[
\be > \frac{17 \log C_d}{3R_0} \quad \text{and} \quad
d\mu_\be = e^{-\be d(\cdot,z_0)}\,d\mu.
\]
Then the following are true\/\textup{:}
\begin{enumerate}
\item \label{i-a}
$\mu_\be$ is globally doubling both on $X_\eps$ and its completion $\clXeps$.
\item \label{i-b}
If $\mu$ supports a  \p-Poincar\'e inequality for balls of radii at most $R_0$,
then $\mu_\be$ supports a global \p-Poincar\'e inequality
both on $X_\eps$ and $\clXeps$.
\end{enumerate} 
\end{thm}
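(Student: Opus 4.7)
The plan is to exploit the bi-Lipschitz-up-to-scale comparison between $(X,d)$ and $(X_\eps,d_\eps)$ at scales smaller than a fixed fraction of the distance to the visual boundary (the ``Whitney scale''), and then to pass from uniformly local properties on $X$ to global properties on $X_\eps$ by a covering/chaining argument. The exponential weight $e^{-\be d(\cdot,z_0)}$ plays a dual role in this scheme: on Whitney-scale balls it is essentially constant and disappears from all estimates up to a bounded multiplicative factor, while on large scales it supplies the decay needed to offset the iterated doubling growth of $\mu$.

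First I would record the basic conformal computation from Bonk--Heinonen--Koskela: for $x,y\in X$ at $d$-distance comparable to or smaller than a fixed fraction of the distance to the boundary, one has $d_\eps(x,y)\asymp e^{-\eps d(x,z_0)}\,d(x,y)$. This identifies, for each $x$, a Whitney scale $r(x)\asymp e^{-\eps d(x,z_0)}R_0$ such that $d_\eps$-balls around $x$ of radius $\lesssim r(x)$ correspond to $d$-balls of radius $\lesssim R_0$. On such sets the factor $e^{-\be d(\cdot,z_0)}$ varies by at most a bounded multiplicative constant, so the uniformly local doubling of $\mu$ translates directly into doubling of $\mu_\be$ for $d_\eps$-balls at the Whitney scale; the same comparison converts the local \p-Poincar\'e inequality for $\mu$ into a \p-Poincar\'e inequality on each such Whitney ball in $X_\eps$.

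The heart of the matter is passing from these Whitney-scale statements to global doubling and a global PI on all of $X_\eps$. Since $X_\eps$ is a bounded uniform space (by the BHK uniformization theorem), any $d_\eps$-ball admits a decomposition into generations of Whitney balls whose base points lie along hyperbolic rays from $z_0$; in particular, $B_\eps(x,2r)$ is covered by a tree of Whitney balls in which $B_\eps(x,r)$ already captures a representative of the innermost generation. Iterating local doubling of $\mu$ over $k$ generations produces a factor $C_d^{k}$, but the weight $e^{-\be d(\cdot,z_0)}$ contributes a competing decay $e^{-c\be R_0 k}$, where $c$ is a geometric constant supplied by rough starlikeness and $\de$-hyperbolicity. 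The quantitative threshold $\be>\tfrac{17}{3}\tfrac{\log C_d}{R_0}$ arises as the smallest $\be$ that makes this geometric series over generations converge with uniform constants, the precise coefficient $17/3$ reflecting the bookkeeping of expansion factors in the BHK Whitney decomposition. For part~\ref{i-b}, with global doubling of $\mu_\be$ in hand, I would apply a Haj\l{}asz--Koskela type chaining argument along Whitney chains provided by the uniform structure of $X_\eps$, telescoping the local PI estimates from Step~1 and absorbing overlap counts by doubling. Extension from $X_\eps$ to its completion $\clXeps$ is then routine: the added boundary is $\mu_\be$-null and both doubling and PI pass to closures by approximation.

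I expect the hardest step to be the quantitative bookkeeping in the Whitney chaining argument, since it is precisely what determines the sharp threshold $\be>17\log C_d/(3R_0)$. Too-generous estimates on generation count or overlap multiplicities would produce a larger coefficient than $17/3$, while the requirement that doubling hold uniformly down to arbitrarily small $d_\eps$-balls (including near the uniform-domain boundary) means there is no slack left in the exponential budget. By contrast, the Poincar\'e inequality and the extension to $\clXeps$ are essentially mechanical once the correct doubling threshold has been pinned down, so the bulk of the work really lies in the doubling half and in the geometric comparison supporting it.
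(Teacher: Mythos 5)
Your proposal is correct and follows essentially the same route as the paper: the subWhitney-scale biLipschitz comparison with essentially constant weight (Theorem~\ref{thm-subWhitney-balls}, Lemma~\ref{lem-mu-be-x}), the decomposition of boundary-centered balls into layers whose iterated doubling growth $C_d^{n/\eps R_0}$ is beaten by the weight's decay $e^{-n\be/\eps}$ precisely when $\be>17\log C_d/3R_0$ (Proposition~\ref{prop-mu-be-xi}), and a telescoping chain argument in the bounded uniform space $X_\eps$ to globalize the Poincar\'e inequality (Proposition~\ref{prop-PI-to-global}). The only minor imprecision is that the threshold $17/3$ is produced entirely by the doubling-layer bookkeeping, not by the Poincar\'e chaining, and the paper does not claim it is sharp.
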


Along the way, we also show that if the assumptions hold with some
value of $R_0$ then they  hold for any value of $R_0$
at the cost of enlarging $C_d$,
  see Proposition~\ref{prop-doubling-R0-quasiconvex}
  and Theorem~\ref{thm-PI-arb-R0}.

We also obtain the following corresponding result for the
hyperbolization procedure, see
Propositions~\ref{prop:doubling-mu-alph} and~\ref{prop:PI-mu-alpha}.

\begin{thm}\label{thm-main2}
Let $(\Om,d)$ be a
locally compact bounded uniform space, equipped with
  a globally doubling measure $\mu$.
Let $k$ be the quasihyperbolic metric on $\Om$, given by
\begin{equation} \label{eq-qhyp-metric}
  k(x,y)=\inf_\gamma\int_\gamma \frac{ds}{d_\Om(\,\cdot\,)},
\end{equation}
where  $d_\Om(x)=\dist(x,\bdy \Om)$
and the infimum is taken
over all rectifiable curves $\gamma$ in $\Om$ connecting $x$ to $y$.
For $\alpha>0$ we equip
the corresponding Gromov hyperbolic space
$(\Om,k)$ with the measure
$\mu^\alpha$ given by $d\mu^\alpha=d_\Om(\,\cdot\,)^{-\alpha}\,d\mu$.
Let $R_0 >0$.

Then the following are true\/\textup{:}
\begin{enumerate}
\item \label{ii-a}
$\mu^\alpha$ is doubling on $(\Om,k)$ for balls of radii at most  $R_0$. 
\item \label{ii-b}
  If $\mu$ supports a  global \p-Poincar\'e inequality, then
$\mu^\alpha$ supports a \p-Poincar\'e inequality for balls of radii at most $R_0$.
\end{enumerate} 
\end{thm}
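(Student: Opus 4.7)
The guiding philosophy of the proof is that on any $k$-ball of radius at most $R_0$, the distance-to-boundary function $d_\Om(\cdot)$ is essentially constant, so the measure $\mu^\alp$ differs from a constant multiple of $\mu$ by a bounded factor, while $k$-balls differ from Euclidean balls only by a rescaling. Both doubling and Poincar\'e can then be pulled back from the global $d$-properties of $\mu$.

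The plan is to proceed via three preliminary comparisons. First, I would verify that whenever $k(z,y)\le R_0$ one has $e^{-R_0}d_\Om(z)\le d_\Om(y)\le e^{R_0}d_\Om(z)$; this follows directly from \eqref{eq-qhyp-metric} since $|\log d_\Om(y)-\log d_\Om(z)|\le k(z,y)$. Consequently, on any $k$-ball $B_k(z,r)$ with $r\le R_0$, the densities $d_\Om(\cdot)^{-\alp}$ and $d_\Om(z)^{-\alp}$ agree up to the multiplicative constant $e^{\alp R_0}$, so $\mu^\alp\asymp d_\Om(z)^{-\alp}\mu$ with constants depending only on $R_0$ and $\alp$. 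Second, I would establish the two-sided inclusion
\[
B_d(z,c_1 r d_\Om(z))\cap\Om\subset B_k(z,r)\subset B_d(z,c_2 r d_\Om(z)),
\qquad 0<r\le R_0,
\]
for suitable $c_1,c_2>0$ depending on $R_0$ and the uniformity constant. The right inclusion is immediate from the first step (along a near-minimizing $k$-curve, $d_\Om$ is bounded above by $e^{R_0}d_\Om(z)$, so $d$-length $\le e^{R_0}d_\Om(z)\cdot k$). The left inclusion is where uniformity of $\Om$ enters: given $y$ with $d(z,y)\le c_1 r d_\Om(z)$, a uniform curve from $z$ to $y$ has $d$-length $\lesssim d(z,y)$ and stays at $d_\Om$-distance $\gtrsim d_\Om(z)$, which gives $k(z,y)\lesssim c_1 r$. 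Third, since the identity $(\Om,d)\to(\Om,k)$ rescales arclength by $1/d_\Om$, a function $g$ is an upper gradient for $u$ in $(\Om,k)$ if and only if $g/d_\Om$ is one in $(\Om,d)$.

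Given these three ingredients, part \eqref{ii-a} is a direct computation: for $r\le R_0$,
\[
\mu^\alp(B_k(z,2r))\le e^{\alp R_0}d_\Om(z)^{-\alp}\mu(B_d(z,2c_2 r d_\Om(z)))
\le C\, d_\Om(z)^{-\alp}\mu(B_d(z,c_1 r d_\Om(z)))
\le C'\mu^\alp(B_k(z,r)),
\]
where the middle step applies the global doubling of $\mu$ a bounded number of times (depending on $\log(c_2/c_1)$) and the last step uses the inclusion $B_d(z,c_1 r d_\Om(z))\cap\Om\subset B_k(z,r)$ together with the $\mu^\alp\asymp d_\Om(z)^{-\alp}\mu$ comparison. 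For part \eqref{ii-b}, fix $u$ with $k$-upper gradient $g$ and set $B=B_k(z,r)$. Apply the global $p$-Poincar\'e inequality for $\mu$ on the Euclidean ball $B'=B_d(z,c_2 r d_\Om(z))\supset B$, obtaining
\[
\vint_{B'}|u-u_{B',\mu}|\,d\mu\le C_{\mathrm{PI}}\cdot c_2 r d_\Om(z)
\biggl(\vint_{\la B'}(g/d_\Om)^p\,d\mu\biggr)^{1/p}.
\]
Convert the $d\mu$-averages to $d\mu^\alp$-averages using the comparison $\mu^\alp\asymp d_\Om(z)^{-\alp}\mu$ (which remains valid on $\la B'\subset B_k(z,r')$ for some $r'\le C(R_0,\la)$, chosen by possibly shrinking the admissible $r$, or by noting the constants may depend on $R_0$), and cancel the factor $d_\Om(z)$ on the right against the $1/d_\Om$ hidden in the upper gradient conversion. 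The resulting inequality, with a slightly larger dilation constant, is the desired local $p$-Poincar\'e inequality for $\mu^\alp$ on $(\Om,k)$.

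The main obstacle is the left inclusion in the second preparatory step: in an arbitrary domain, small $d$-distance does not control $k$-distance because the only connecting curves may be forced close to $\bdy\Om$. It is precisely the uniformity hypothesis on $\Om$ that furnishes a controlled curve between two nearby points, and this is where the assumption is essentially used; the rest of the argument is a bookkeeping exercise in tracking how the constants depend on $R_0$, $\alp$, the doubling constant of $\mu$, the uniformity constant, and (for part \eqref{ii-b}) the Poincar\'e constants of $\mu$.
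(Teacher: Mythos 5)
Your overall strategy is the same as the paper's (the ball inclusions of Lemma~\ref{lem-k-est}, the comparability of $d_\Om(\,\cdot\,)$ with $d_\Om(z)$ on small $k$-balls, and the conversion $\ghat\mapsto\ghat/d_\Om$ of upper gradients), and your treatment of part~\ref{ii-a} is correct: the inclusions and the density comparison hold for all $r\le R_0$ with constants depending on $R_0$, $L$ and $\alp$, so the doubling transfer goes through directly. One small remark: only the quasiconvexity of $\Om$ (which every uniform space has) is used here, not the full uniformity hypothesis.

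There is, however, a genuine gap in part~\ref{ii-b}, at exactly the point you flag parenthetically. Your argument needs the comparison $\mu^\alp\asymp d_\Om(z)^{-\alp}\mu$ (equivalently $d_\Om\simeq d_\Om(z)$) on the dilated $d$-ball $\la B'=B_d(z,\la c_2 r\,d_\Om(z))$ in order to convert both the averages and the factor $d_\Om^{-p}$ coming from the upper gradient. This comparison holds only while $\la c_2 r<1$: as soon as $\la c_2 r\ge 1$ the ball $\la B'$ reaches $\bdy\Om$, where $d_\Om\to 0$ and $d_\Om^{-\alp}\to\infty$, and $\la B'$ is not contained in any $k$-ball of finite radius. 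So the fallback ``the constants may depend on $R_0$'' does not rescue the computation, and the other option, shrinking the admissible $r$, proves the Poincar\'e inequality only for $k$-balls of radius at most some $R_0'=R_0'(L,\la)$, which is strictly weaker than the statement for an arbitrary prescribed $R_0$. What is missing is the local-to-semilocal upgrade: since $(\Om,k)$ is a length space on which $\mu^\alp$ is (by part~\ref{ii-a}) doubling up to any fixed scale, a $p$-Poincar\'e inequality for balls of radii at most $R_0'$ implies one for balls of radii at most $R_0$ by a chaining/covering argument. This is not mere bookkeeping; the paper isolates it as Theorem~\ref{thm-PI-arb-R0} (whose proof relies on a nontrivial chaining construction, and whose quasiconvexity hypothesis cannot be dropped). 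Your proof of part~\ref{ii-b} becomes complete once you first establish the inequality for sufficiently small radii and then invoke that extension result.
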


We use Theorem~\ref{thm-main}  to study potential
theory on locally compact roughly starlike
Gromov hyperbolic spaces, equipped with a locally uniformly doubling measure
supporting a uniformly local Poincar\'e inequality.
In particular, we characterize when the finite-energy
Liouville theorem  holds on such spaces, i.e.\
when there exist no nonconstant globally defined
\p-harmonic functions with finite \p-energy. 
The characterization is given in terms of
the nonexistence of two disjoint compact
sets of positive \p-capacity in the
boundary 
of the uniformized space, see Theorem~\ref{thm-nonconst-energy-iff-pos-cap}.
This characterization complements
our results in \cite{BBSliouville}.

As already mentioned, an in-depth study of locally compact roughly starlike Gromov
hyperbolic spaces, as well as
links between them and bounded locally compact uniform domains,
was undertaken in the seminal work Bonk--Heinonen--Koskela~\cite{BHK-Unif}.
They showed \cite[the discussion before Proposition~4.5]{BHK-Unif}
that the operations of uniformization and hyperbolization are mutually
opposite:
\begin{itemize}
\item 
  A uniformization followed by a hyperbolization takes a given
  locally compact roughly starlike Gromov
hyperbolic  space $X$ to a roughly
starlike Gromov hyperbolic  space which is biLipschitz equivalent to $X$,
  see \cite[Proposition~4.37]{BHK-Unif}.
  (Note that in \cite{BHK-Unif} ``quasiisometric'' means biLipschitz.)
\item
A hyperbolization of a bounded locally compact uniform space $\Om$, followed by a
uniformization, returns a bounded uniform space which is quasisimilar
to $\Om$, see \cite[Proposition~4.28]{BHK-Unif}.
\end{itemize}

Here, a homeomorphism
$\Phi:X\to Y$ between two noncomplete metric spaces 
is \emph{quasisimilar} if it is
$C_x$-biLipschitz on every ball $B(x,c_0 \dist(x,\partial X))$, for some
$0<c_0<1$ independent of $x$, and there exists a
homeomorphism $\eta:[0,\infty)\to[0,\infty)$ 
such that for each
distinct triple of points $x,y,z\in X$, 
\[
\frac{d_Y(\Phi(x),\Phi(y))}{d_Y(\Phi(x),\Phi(z))} \le
\eta\biggl(\frac{d_X(x,y)}{d_X(x,z)}\biggr).
\]

It was also shown in~\cite[Theorem~4.36]{BHK-Unif} that two roughly
starlike Gromov hyperbolic spaces are biLipschitz equivalent if and
only if any two of their uniformizations are quasisimilar.

We continue the study of Gromov hyperbolic spaces in this spirit by
considering pairs of Gromov hyperbolic spaces in
Section~\ref{unif-gromov-skewproduct}. 
Note that the Cartesian product of two Gromov hyperbolic spaces need
not be Gromov hyperbolic, as demonstrated by $\R\times\R$, which is
\emph{not} a Gromov hyperbolic space even though $\R$ is.
On the other
hand, in Section~\ref{unif-gromov-skewproduct} we obtain the following
result, see Proposition~\ref{prop-unif-product} for a more
  precise result.

\begin{prop}
Let $(\Om,d)$ and $(\Om',d')$ be two bounded uniform spaces.
Then $\Om \times\Om'$ is a bounded uniform space with respect
to the metric
\[
\dt((x,x'),(y,y')) = d(x,y) + d'(x',y').
\]
\end{prop}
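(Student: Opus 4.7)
The plan is to construct, for any two points $p=(x,x')$ and $q=(y,y')$ of $\Omt:=\Om\times\Om'$, a rectifiable curve $\gat$ in $\Omt$ joining them which is quasiconvex and satisfies the twisted double-cone (cigar) condition. A preliminary observation driving the whole argument is that, since the completion of $\Omt$ in the sum metric is $\clOm\times\overline{\Om'}$, the distance from any $(u,v)\in\Omt$ to $\bdy\Omt$ equals $\min(d_\Om(u),d_{\Om'}(v))$. Thus an admissible uniform curve must keep \emph{both} coordinates away from their respective boundaries simultaneously in its middle portion.

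Starting from $A$-uniform curves $\gamma_1\colon[0,\ell_1]\to\Om$ from $x$ to $y$ and $\gamma_2\colon[0,\ell_2]\to\Om'$ from $x'$ to $y'$, parametrized by arclength, I would first try the product curve $\gat(s)=(\gamma_1(s\ell_1/L),\gamma_2(s\ell_2/L))$, $s\in[0,L]$, with $L=\ell_1+\ell_2$. Quasiconvexity is immediate from $L\le A(d(x,y)+d'(x',y'))=A\,\dt(p,q)$, and since $\gat$ has unit speed in the sum metric the cigar inequality at arclength $s$ reduces to $\min(s,L-s)\le\tilde A\min(d_\Om(\gamma_1(s\ell_1/L)),d_{\Om'}(\gamma_2(s\ell_2/L)))$. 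Combining the two individual uniformity bounds yields $\min(d_\Om,d_{\Om'})\ge\min(\ell_1,\ell_2)\min(s,L-s)/(AL)$, which is satisfactory precisely when $\ell_1$ and $\ell_2$ are comparable.

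When $\ell_1$ and $\ell_2$ are badly imbalanced --- for instance $x'=y'$ sits very close to $\bdy\Om'$ while $\alpha:=d(x,y)$ is large --- the product curve fails, and in fact no curve whose second coordinate stays near $x'$ can succeed, since $\min(d_\Om,d_{\Om'})$ is then pinned down by the small value $d_{\Om'}(x')$. The remedy is a detour: choose a ``safe altitude'' $o'\in\Om'$ with $d_{\Om'}(o')\gtrsim\min(\ell_1,\diam\Om')$, and replace $\gamma_2$ by the concatenation of uniform curves in $\Om'$ from $x'$ to $o'$ and from $o'$ to $y'$, running this detour in parallel with $\gamma_1$. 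Such $o'$ is obtained either as a John center of the bounded uniform space $\Om'$ (which exists because uniform spaces are John and bounded John domains have John centers with $d_{\Om'}(o')\gtrsim\diam\Om'$), or, when $\ell_1\ll\diam\Om'$, as an intermediate point along a uniform curve from $x'$ followed for arclength of order $\ell_1$. The symmetric detour in $\Om$ handles the case when $\ell_2$ dominates.

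The main obstacle I expect is verifying the cigar inequality uniformly across the pieces of this concatenation, especially at the junctions where the curve transitions from ``lifting'' in one factor to ``sliding'' in the other and the binding coordinate in $\min(d_\Om,d_{\Om'})$ flips; each subinterval has its own formulas for arclength, for the two coordinatewise distances to the boundaries, and for which of them is active. A secondary bookkeeping task is to track how the final constant $\tilde A$ depends on $A$, $A'$, and on the ratio $\diam\Om/\diam\Om'$ and its reciprocal --- a dependence that is unavoidable, because if $\diam\Om'\ll\diam\Om$ then every point of $\Omt$ has $d_{\Omt}\le\tfrac12\diam\Om'$, while joining points far apart in the first factor requires curves of length comparable to $\diam\Om$, forcing $\tilde A\gtrsim\diam\Om/\diam\Om'$.
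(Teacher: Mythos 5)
Your plan is essentially the paper's proof: the paper's Lemma~\ref{lem-long-banana} is precisely your ``detour to a safe altitude'' device (for any prescribed length $L$ with $d(x,y)\le L\le\diam\Om$ it produces a curve of length comparable to $L$ satisfying a cigar condition, by routing toward a near-deepest point $x_0$ with $d_\Om(x_0)\ge\tfrac45\sup_z d_\Om(z)$ and, when that curve is still too short, adding a loop at that altitude), and the product curve is then the parallel run of two such curves whose lengths are balanced by taking $L=\La D$ and $L'=\La D'$ with $\La=\max\{d(x,y)/D,\,d'(x',y')/D'\}$. Your observations that $d_{\Omt}=\min(d_\Om,d_{\Om'})$ and that the constant must degenerate with the ratio of the diameters both match the paper, whose final constant is $\tilde A=80[(A+1)D+(A'+1)D']/\min\{D/A^3,D'/(A')^3\}$.
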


We use this, together with the results of~\cite{BHK-Unif}, to construct an
indirect product $X\times_\eps Y$ of two Gromov hyperbolic spaces which 
is also Gromov hyperbolic, see Section~\ref{unif-gromov-skewproduct}. 
In  this section we also study
properties of such product hyperbolic spaces.
For a fixed Gromov
$\delta$-hyperbolic space $X$, there is a whole family of
uniformizations $X_\eps$, one for each $0<\eps\le \eps_0(\delta)$.
As mentioned above, $X_{\eps}$ is quasisimilar to $X_{\eps'}$ when
$0<\eps,\eps'\le \eps_0(\delta)$.

On the other hand, we show in Proposition~\ref{prop-canonical-Lip}
that the canonical identity mapping between
$X\times_\eps Y$ and $X\times_{\eps'} Y$ is never biLipschitz
if $\eps\ne\eps'$, and it is even possible that the two indirect
products 
are not even quasiisometric.
Here, a map $\Phi:Z\to W$
is a \emph{quasiisometry} (also called, perhaps more accurately,
 rough quasiisometry as in \cite{BHK-Unif}
and \cite{BBGS}) if there are $C>0$ and $L\ge 1$
such that the $C$-neighborhood of $\Phi(Z)$ contains $W$ and for all $z,z'\in Z$,
\[
\frac{d(z,z')}{L}-C \le d(\Phi(z),\Phi(z'))\le Ld(z,z')+C.
\]
It is not difficult to show that visual boundaries of quasiisometric
locally compact roughly starlike Gromov hyperbolic spaces are quasisymmetric,
see e.g.\ Bridson--Haefliger~\cite[Theorem~3.22]{BridHaef}.
We take advantage of this to show the quasiisometric nonequivalence
of two indirect products
of the hyperbolic disk and $\R$, see Example~\ref{ex:non-qs}.

The broad organization of the paper is as follows.
Background definitions and preliminary results are
given in Sections~\ref{sect-Gromov} and~\ref{sect-doubling}, while the
definition of Poincar\'e inequalities is
given in Section~\ref{sect-defn-of-PI-ug}.  
The main aims in
Sections~\ref{sect-doubling-of-mubeta} and~\ref{sect-PI-for-mubeta}
are to deduce parts~\ref{i-a} and \ref{i-b}, respectively,
of Theorem~\ref{thm-main}.
The dual transformation of hyperbolization,
via the quasihyperbolic metric \eqref{eq-qhyp-metric}, is discussed in
Section~\ref{sect-hyperbolization}, where
also Theorem~\ref{thm-main2} is shown.
The above sections fulfill the
main goal of this paper, and form a basis for comparing the potential
theories on Gromov hyperbolic spaces 
and on uniform spaces.

The remaining sections are devoted to applications of the results
obtained in the preceding 
sections.
In Section~\ref{unif-gromov-skewproduct} we construct and study
the indirect product, 
providing a family of new Gromov hyperbolic
spaces from a pair of Gromov hyperbolic spaces.
The subsequent sections are devoted to the
impact of uniformization and hyperbolization procedures on nonlinear
potential theory.
In Section~\ref{sect-N1p-p-harm} we discuss
Newton--Sobolev spaces and \p-harmonic functions, and
then in Section~\ref{p-harm-appl} we show that under certain natural
conditions, the class of \p-harmonic functions is preserved under the
uniformization and hyperbolization procedures.
In this final section, we also characterize which Gromov hyperbolic spaces
with bounded geometry support the finite-energy Liouville theorem
for \p-harmonic functions.

In the beginning of each section,
we list the
standing assumptions for that section in italicized text;
in Sections~\ref{sect-Gromov} and~\ref{sect-doubling-of-mubeta}
these assumptions are given a little later.

\begin{ack}
The discussions leading to this paper started in 2013, while
the authors were visiting Institut Mittag-Leffler.
They continued during the parts of 
2017 and 2018 when N.~S. was a guest 
professor at Link\"oping University,
partially
funded by the Knut and Alice Wallenberg Foundation,
and during the parts of 2019 when A.~B. and J.~B. were Taft Scholars
at the University of Cincinnati.
The authors would like to thank these institutions for 
their kind support and hospitality.

A.~B. and J.~B. were partially supported by the Swedish Research Council
grants 2016-03424 and 621-2014-3974, respectively. 
N.~S. was partially supported by the NSF grants DMS-1500440
and DMS-1800161.

The authors would also like to thank Xining Li for a fruitful
discussion on annular quasiconvexity
leading to  Lemma~\ref{lem-annular}.
\end{ack}

\section{Gromov hyperbolic spaces}
\label{sect-Gromov}

A \emph{curve} is a continuous mapping from an interval.
Unless stated otherwise,
we will only consider curves which are defined on compact intervals.
We denote the length of a curve $\ga$  by $l_\ga=l(\ga)$,
and a curve is \emph{rectifiable} if it has finite length.
Rectifiable curves can be parametrized by arc length $ds$.

A metric space $X=(X,d)$ is \emph{$L$-quasiconvex} if 
for each $x,y\in X$ there is a curve $\gamma$ with end points $x$ and
$y$ and length $l_\gamma\le L d(x,y)$.
$X$ is a \emph{geodesic} space if it is $1$-quasiconvex,
    and $\ga$ is then a \emph{geodesic}
from $x$ to $y$.
We will consider a related metric, called
the  \emph{inner metric}, given by
\begin{equation} \label{eq-din}
\din(x,y):=\inf_\ga l_\ga 
\quad \text{for all } x,y \in X,
\end{equation}
where the infimum is taken over all curves $\ga$
from $x$ to $y$.
If $(X,d)$ is quasiconvex, then $d$ and $\din$ are
biLipschitz equivalent metrics on $X$. The space
$X$ is a \emph{length space} if
$\din(x,y)=d(x,y)$ for all $x,y\in X$.
  By Lemma~4.43 in \cite{BBbook}, arc length is the same
  with respect to $d$ and $\din$, 
and thus $(X,\din)$ is a length space.
A metric space is \emph{proper} if all closed bounded sets are compact.
A proper length space
is necessarily a geodesic space,
  by  Ascoli's theorem or the Hopf--Rinow theorem below.
 To avoid pathological situations, all metric spaces in
    this paper are assumed to contain at least two points.

We denote balls in $X$ by $B(x,r)=\{y \in X: d(y,x) <r\}$ and the
scaled concentric ball by
$\la B(x,r)=B(x,\la r)$. 
In metric spaces it can happen that
balls with different centers and/or radii denote the same set. 
We will however adopt the convention that a ball comes with
a predetermined center and radius. 
Similarly, when we say that $x \in \ga$ we mean that
  $x=\ga(t)$ for some $t$.
  If $\ga$ is noninjective, this $t$ may not be unique, but we are
always implicitly referring to
  a  specific such $t$.

\begin{thm} \label{thm-Hopf-Rinow}
  \textup{(Hopf--Rinow theorem)}
  If $X$ is a complete locally compact length  space,
  then it is proper and geodesic.
\end{thm}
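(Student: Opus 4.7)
The plan is to first establish properness of $X$, and then derive the geodesic property as a consequence via Arzel\`a--Ascoli. For properness, I would fix a basepoint $x\in X$ and set
\[
R := \sup\{r\ge 0 : \overline{B}(x,r) \text{ is compact}\},
\]
which is positive by local compactness. The goal is to show $R=\infty$, whence every closed bounded set, lying in some closed ball, is compact. Arguing by contradiction, suppose $R<\infty$.

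\emph{Step 1: $\overline{B}(x,R)$ is compact.} Being closed in the complete space $X$, it suffices to verify total boundedness. Given $\eps>0$, pick $r\in(R-\eps/3,R)$ with $\overline{B}(x,r)$ compact, and choose a finite $\eps/3$-net $\{y_1,\dots,y_n\}$ for $\overline{B}(x,r)$. For any $z\in\overline{B}(x,R)$, the length-space hypothesis produces a rectifiable curve from $x$ to $z$ of length less than $R+\eps/3$; the point $z'$ on this curve at arc length $r$ from $x$ satisfies $d(x,z')\le r$ and $d(z',z)<2\eps/3$. Hence some $y_i$ lies within $\eps$ of $z$, so the $y_i$ form an $\eps$-net for $\overline{B}(x,R)$.

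\emph{Step 2: Some larger ball is compact, giving a contradiction.} Using compactness of $\overline{B}(x,R)$ together with local compactness at each of its points, I would extract a uniform $\rho>0$ such that $\overline{B}(y,3\rho)$ is compact for every $y\in\overline{B}(x,R)$, via a finite subcover. Cover $\overline{B}(x,R)$ by finitely many balls $B(y_i,\rho/2)$. For any $z$ with $d(x,z)<R+\rho/2$, the length-space property yields a curve from $x$ to $z$ of length less than $R+\rho$, whose point at arc length $R$ lies in $\overline{B}(x,R)$ and hence in some $B(y_i,\rho/2)$; consequently $z\in\overline{B}(y_i,2\rho)$. Thus $\overline{B}(x,R+\rho/2)$ is covered by finitely many compact sets, hence compact, contradicting the definition of $R$.

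For the geodesic property, given $x,y\in X$, pick curves $\ga_n$ from $x$ to $y$ with $l_{\ga_n}\to d(x,y)$, and reparametrize each by constant speed on $[0,1]$. The resulting family is equi-Lipschitz with constant tending to $d(x,y)$ and takes values in the compact set $\overline{B}(x,d(x,y)+1)$, available thanks to the properness established above. Arzel\`a--Ascoli furnishes a uniformly convergent subsequence whose limit $\ga$ joins $x$ to $y$, and lower semicontinuity of length under uniform convergence gives $l_\ga\le d(x,y)$, so $\ga$ is a geodesic.

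The main obstacle is Step~1: without the length-space hypothesis one has no way to control the distance from a generic $z\in\overline{B}(x,R)$ to the compact ball $\overline{B}(x,r)$ using only $d(x,z)\le R$, since approximating $z$ requires producing an intermediate point along a near-minimizing curve. The remaining steps are comparatively routine, resting on the interplay between completeness, total boundedness and local compactness.
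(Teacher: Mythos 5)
Your argument is correct. Note, however, that the paper does not prove this theorem at all: it states it and refers to Gromov's book for a proof, so there is no "paper proof" to compare against. What you have written is the standard proof of the Hopf--Rinow--Cohn-Vossen theorem (as in Gromov, or Burago--Burago--Ivanov): properness via the supremum $R$ of radii of compact closed balls, with the length-space hypothesis used exactly where you flag it --- to push an $\eps/3$-net of $\overline{B}(x,r)$ out to an $\eps$-net of $\overline{B}(x,R)$ by cutting a near-minimizing curve at arc length $r$, and then to show that a slightly larger ball sits inside a finite union of compact balls; and then existence of geodesics by Arzel\`a--Ascoli plus lower semicontinuity of length. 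All steps check out: closed balls of radius $r<R$ are compact as closed subsets of compact balls; the uniform $\rho$ in Step~2 comes from a routine Lebesgue-number argument using local compactness on the compact set $\overline{B}(x,R)$; and in both steps the degenerate case where the curve is shorter than the prescribed arc-length parameter (so that no cut point exists) is harmless, since then $z$ already lies in the smaller ball --- worth a sentence if you write this up formally. Your proposal thus supplies a complete, self-contained proof of a result the paper only cites.
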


This version is a generalization of the original theorem,
see e.g.\ Gromov~\cite[p.~9]{GromovBook} for a proof.

\begin{deff}
A complete unbounded geodesic metric space $X$ is
\emph{Gromov hyperbolic}
if
there is  a \emph{hyperbolicity constant} $\delta\ge0$ such that
whenever 
$[x,y]$, $[y,z]$ and $[z,x]$ are geodesics in $X$,
every point $w\in[x,y]$ lies within a distance
$\delta$ of $[y,z]\cup[z,x]$.
\end{deff}

The ideal Gromov hyperbolic space is a metric tree, which 
is Gromov hyperbolic with $\de=0$.
A \emph{metric tree} is a tree where each edge is considered  to be a geodesic of unit length.

\begin{deff}
An unbounded metric space  $X$ is \emph{roughly starlike}
if there are some
$z_0\in X$ and $M>0$ such that whenever $x\in X$ there is a geodesic ray
$\gamma$ in $X$, starting from $z_0$, such that $\dist(x,\gamma)\le M$.
A \emph{geodesic ray} is a curve $\ga:[0,\infty) \to X$ with infinite length
  such that $\ga|_{[0,t]}$ is a geodesic for each $t > 0$.
\end{deff}
  
If $X$ is a roughly starlike Gromov hyperbolic space, then
the roughly starlike condition holds for every choice of $z_0$,
although $M$ may change.

\begin{deff}
A nonempty open set $\Om\subsetneq X$ in a metric space $X$
is an \emph{$A$-uniform domain}, with $A\ge1$,  
if for every pair $x,y\in\Om$
there is a rectifiable arc length parametrized
curve $\ga: [0,l_\ga] \to \Om$ with $\ga(0)=x$ and
$\ga(l_\ga)=y$ such that
$l_\ga \le A d(x,y)$ and
\[
d_\Om(\ga(t)) \ge \frac{1}{A} \min\{t, l_\ga-t\}
    \quad \text{for } 0 \le t \le l_\ga,
\]
where
\[ 
  d_\Om(z)=\dist(z,X \setm \Om),
  \quad z \in \Om.
\] 
The curve $\ga$ is said to be an \emph{$A$-uniform curve}.
A noncomplete
metric space $(\Om,d)$ is \emph{$A$-uniform} if it is an $A$-uniform
domain in its completion.

A ball $B(x,r)$ in a uniform space $\Om$
  is a \emph{subWhitney ball} if
  $r \le c_0 d_\Om(x)$, where $0<c_0<1$ is a predetermined constant.
We will primarily use $c_0=\frac12$.
\end{deff}

The completion of a locally compact uniform space is always proper,
by Proposition~2.20 in
Bonk--Heinonen--Koskela~\cite{BHK-Unif}.
Unlike the definition used in \cite{BHK-Unif},
we do not require uniform spaces to be locally compact.

It follows directly from the definition that an $A$-uniform space is
$A$-quasiconvex.
One might ask if the uniformity assumption in 
Proposition~2.20 in~\cite{BHK-Unif} can be replaced by a quasiconvexity
assumption, i.e.\
if the completion of a locally compact quasiconvex
space is always proper, however the following
example shows that this can fail even if the original space is geodesic.
Thus the uniformity assumption in
Proposition~2.20 in~\cite{BHK-Unif} is really crucial.

\begin{example}
Let
\[
X=\biggl\{\{x_j\}_{j=1}^\infty :
\sum_{j=1}^\infty |x_j|  \le 1, \ 0 < x_1 \le 1, \text{ and } x_n=0 \text{ if } x_1>\frac{1}{n},
\ n=2,3,\ldots \biggr\},
\]
equipped with the $\ell^1$-metric.
Then $X$ is a bounded locally compact geodesic space which is
not totally bounded, and thus has a nonproper completion.
\end{example}

\medskip

\emph{We assume from now on that $X$ is a
  locally compact roughly starlike Gromov $\de$-hyperbolic space.
  We also fix a point $z_0 \in X$ and let
$M$ be the constant in the roughly starlike condition with respect to
$z_0$.}

\medskip

By the Hopf--Rinow Theorem~\ref{thm-Hopf-Rinow},  $X$ is proper.
The point $z_0$ will serve as a center for the uniformization $X_\eps$ of $X$.
Following Bonk--Heinonen--Koskela~\cite{BHK-Unif},
we define
\[
(x|y)_{z_0}:=\tfrac12[d(x,z_0)+d(y,z_0)-d(x,y)],
\quad x,y \in X,
\]
and, for a fixed $\eps>0$,
 the \emph{uniformized metric} $d_\eps$ on $X$
as
\[ 
  d_\eps(x,y) = \inf_\ga \int_\ga \rho_\eps\,ds,
  \quad \text{where } \rho_\eps(x)=e^{-\eps d(x,z_0)}
\] 
and the infimum is taken over all
rectifiable curves $\ga$ in $X$ joining $x$ to $y$.
Note that if $\ga$ is a compact curve in $X$,
then  $\rho_\eps$ is bounded from above and away
from $0$ on $\ga$, and in particular $\ga$ is rectifiable with respect to $d_\eps$
if and only if it is rectifiable with respect to $d$.

The set $X$, equipped with the metric $d_\eps$, is denoted by $X_\eps$.
We let $\clXeps$ be the completion of $X_\eps$, and let
$\partial_\eps X = \clXeps\setminus X_\eps$.
When writing e.g.\ $B_\eps$, $\diam_\eps$ and $\dist_\eps$
the $\eps$ indicates that these notions are taken with respect to
$(\clXeps,d_\eps)$.
The length of the curve $\ga$ with respect to $d_\eps$
is denoted by $l_\eps(\ga)$,
and arc length $ds_\eps$ with respect to $d_\eps$ satisfies
\[ 
  ds_\eps = \rho_\eps\,ds.
\] 
It follows that $X_\eps$ is a length space, and thus also $\clXeps$
is a length space.
By a direct calculation (or \cite[(4.3)]{BHK-Unif}),
$\diam_\eps \clXeps= \diam_\eps X_\eps  \le 2/\eps$.
Note that as a set, $\partial_\eps X$ is
independent of $\eps$ and depends only on the Gromov hyperbolic
structure of $X$, see e.g.\ 
\cite[Section~3]{BHK-Unif}.
The notation adopted in~\cite{BHK-Unif} is $\partial_G X$.

The following important theorem is due to Bonk--Heinonen--Koskela~\cite{BHK-Unif}.

\begin{thm} \label{thm-eps0}
  There is a constant $\eps_0=\eps_0(\de)>0$ only depending on $\de$ such that
  if $0 < \eps \le \eps_0(\de)$, then
  $X_\eps$ is 
  an $A$-uniform space for some $A$ depending only on $\de$,
  and $\clXeps$ is a compact geodesic space.

  If $\de=0$, then $\eps_0(0)$ can be chosen arbitrarily large.
\end{thm}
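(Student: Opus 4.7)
The plan is to reproduce the proof of Bonk--Heinonen--Koskela in \cite{BHK-Unif}. The backbone of the argument is the comparison estimate
\[
d_\eps(x,y) \simeq e^{-\eps(x|y)_{z_0}} \min\biggl\{\frac{1}{\eps},\, d(x,y)\biggr\},
\quad x,y \in X,
\]
which I would derive by integrating $\rho_\eps$ along a $d$-geodesic $[x,y]$ and using that, by Gromov $\de$-hyperbolicity, the quantity $d(z,z_0)$ for $z\in[x,y]$ near the tripod center equals $(x|y)_{z_0}$ up to an additive error controlled by $\de$. The lower bound requires that any competing rectifiable curve joining $x$ to $y$ must pass near a point at distance $(x|y)_{z_0}+O(\de)$ from $z_0$, so $\rho_\eps$ is essentially maximal with value $\simeq e^{-\eps(x|y)_{z_0}}$ on a stretch of $d$-length $\simge d(x,y)$.

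With this in hand, I would first show that for $\eps$ small enough depending only on $\de$, every $d$-geodesic is a $(1+C\eps\de)$-quasi-geodesic in $X_\eps$, so that the stability of quasi-geodesics in Gromov hyperbolic spaces confines $d_\eps$-length-minimizing curves to a bounded Hausdorff neighborhood of $d$-geodesics. Next I would verify the $A$-uniform conditions. Fix $x,y\in X_\eps$ and take a $d$-geodesic $\ga\colon[0,l]\to X$ with $\ga(0)=x$ and $\ga(l)=y$. The comparison estimate gives $l_\eps(\ga)\le A\,d_\eps(x,y)$ for the length bound. For the twisted cone condition I would use that $\dist_\eps(\ga(t),\partial_\eps X) \simge e^{-\eps d(\ga(t),z_0)}/\eps$, combined with the fact that along $\ga$ the quantity $d(\ga(t),z_0)$ decreases roughly linearly towards the Gromov product point and then grows linearly, so that
\[
\dist_\eps(\ga(t),\partial_\eps X) \simge \min\{l_\eps(\ga|_{[0,t]}),\, l_\eps(\ga|_{[t,l]})\},
\]
with constants depending only on $\de$. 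This is where the roughly starlike hypothesis enters, since it guarantees enough room ``beyond'' $z_0$ to make the estimate for $\dist_\eps(\ga(t),\partial_\eps X)$ work at interior points.

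For the statement that $\clXeps$ is compact and geodesic, the uniformized diameter $\diam_\eps X_\eps\le 2/\eps$ follows by direct integration of $\rho_\eps$ along a geodesic ray from $z_0$, which exists by rough starlikeness. Local compactness of $(X,d)$ transfers to precompactness of $d_\eps$-balls at bounded $d$-distance from $z_0$, while the exponential weight squeezes the ``ends at infinity'' into arbitrarily small $d_\eps$-balls; together these give total boundedness of $X_\eps$. Completeness of $\clXeps$ is by construction, and the length-space property passes to the completion, so the Hopf--Rinow Theorem~\ref{thm-Hopf-Rinow} yields that $\clXeps$ is proper and geodesic.

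The hard part is the quantitative choice of $\eps_0(\de)$: it must be small enough for the comparison estimate, the quasi-geodesic stability result, and the cone-condition constants all to depend only on $\de$. In the tree case $\de=0$, geodesics are unique and the Gromov product identities become exact equalities, so the comparison estimate has constants independent of $\eps$ and any $\eps>0$ works; this yields the final assertion that $\eps_0(0)$ can be chosen arbitrarily large.
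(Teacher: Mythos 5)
Your plan diverges from the paper's actual proof, which is essentially a citation: the paper invokes Proposition~4.5 of \cite{BHK-Unif} for the $A(\de)$-uniformity of $X_\eps$, Proposition~2.20 of \cite{BHK-Unif} for the properness (hence compactness) of the completion of a bounded locally compact uniform space, and Ascoli/Hopf--Rinow for geodesicity; the only genuinely new content is the remark about $\de=0$. You instead sketch a re-derivation of the BHK machinery. That is legitimate in principle, and your treatment of compactness (total boundedness from the exponential decay of $\rho_\eps$ together with properness of $(X,d)$) and of the case $\de=0$ are in the right spirit; the paper's cleaner formulation of the latter is that in a tree every curve from $x$ to $y$ contains the unique geodesic $[x,y]$ as a subcurve, so the Gehring--Hayman inequality holds with constant $1$ for every $\eps>0$, which is exactly why no upper bound on $\eps$ is needed.

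There is, however, a real gap at the heart of your sketch. The lower bound in the comparison estimate --- equivalently, the length condition $l_\eps(\ga)\le A\, d_\eps(x,y)$ for a $d$-geodesic $\ga$ --- is precisely the Gehring--Hayman theorem \cite[Theorem~5.1]{BHK-Unif}, and it is the only place where the restriction $\eps\le\eps_0(\de)$ enters. You propose to obtain it from stability of quasi-geodesics, but an arbitrary competitor curve, or even a $d_\eps$-length-minimizing one, is not a priori a quasigeodesic of $(X,d)$, so stability does not apply to it; a competitor could in principle wander far from $z_0$ where $\rho_\eps$ is tiny, and ruling this out is the technical content of BHK's Section~5, not a formal consequence of hyperbolicity. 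A second, smaller inaccuracy: rough starlikeness is not what makes the cone condition work at interior points. That condition needs a \emph{lower} bound on $\dist_\eps(\cdot,\partial_\eps X)$, namely $d_\eps(x)\ge e^{-\eps d(x,z_0)}/e\eps$, which holds without rough starlikeness; it is the \emph{upper} bound $d_\eps(x)\le C_0\, e^{-\eps d(x,z_0)}/\eps$ of Lemma~\ref{lem:dist-to-eps-bdry} that uses the constant $M$.
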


In the proof below we recall the relevant references from
\cite{BHK-Unif} and specify the dependence on $\delta$.

\begin{proof}
  By Proposition~4.5 in  \cite{BHK-Unif} there is $\eps_0(\de)>0$ such that if
  $0< \eps \le \eps_0(\de)$, then $X_\eps$ is 
an $A$-uniform space for some $A$ depending only on $\de$.
As $X_\eps$ is bounded, it follows from  Proposition~2.20 in \cite{BHK-Unif}
that  $\clXeps$ is a compact length space,
which by
Ascoli's theorem or
the Hopf--Rinow Theorem~\ref{thm-Hopf-Rinow} is geodesic.

The bound $\eps_0(\de)$ in Proposition~4.5 in \cite{BHK-Unif} is only needed for
the Gehring--Hayman lemma to be true, see \cite[Theorem~5.1]{BHK-Unif}.
If $\de=0$, then any curve from $x$ to $y$ contains  the unique geodesic $[x,y]$ as a subcurve.
From this the Gehring--Hayman lemma follows directly without any bound on $\eps$.
Note that in this case it also follows that a curve in
$X=X_\eps$ is simultaneously a geodesic
with respect to $d$ and $d_\eps$.
\end{proof}  

We recall, for further reference, the following key estimates
from \cite{BHK-Unif}.

\begin{lem}   \label{lem-BHK-dist-rho}
{\rm (\cite[Lemma~4.10]{BHK-Unif})}
There is a constant $C(\de)\ge1$
such that for all $0<\eps\le\eps_0=\eps_0(\de)$ and all $x,y\in X$,
\begin{equation}   \label{eq-BHK-dist}
\frac{1}{C(\de)} d_\eps(x,y) 
\le \frac{\exp(-\eps(x|y)_{z_0})}{\eps} \min\{1,\eps d(x,y)\} 
\le C(\de) d_\eps(x,y).
\end{equation}
\end{lem}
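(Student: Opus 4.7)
I would prove the two sides of \eqref{eq-BHK-dist} separately. For the upper bound on $d_\eps(x,y)$, I would use the geodesic $\ga_0=[x,y]$ in $X$ as a competitor in the defining infimum. The tripod comparison of the geodesic triangle $\{x,y,z_0\}$, which is the combinatorial heart of Gromov $\de$-hyperbolicity, ensures that $t\mapsto d(\ga_0(t),z_0)$ is approximated within $O(\de)$ by the V-shaped function
\[
t\mapsto\max\{d(x,z_0)-t,\,(x|y)_{z_0},\,d(y,z_0)-(d(x,y)-t)\}.
\]
Integrating $\exp(-\eps\cdot)$ against this profile along $[x,y]$ is an elementary computation, producing $\eps^{-1}(2e^{-\eps(x|y)_{z_0}}-e^{-\eps d(x,z_0)}-e^{-\eps d(y,z_0)})$, which by the standard estimates $u/2\le 1-e^{-u}\le u$ (for $0\le u\le 1$) and $1-e^{-u}\ge 1-e^{-1/2}$ (for $u\ge 1/2$) is comparable to $\eps^{-1}e^{-\eps(x|y)_{z_0}}\min\{1,\eps d(x,y)\}$. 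The $O(\de)$ approximation error enters as a multiplicative factor $e^{\eps O(\de)}$, bounded since $\eps\le\eps_0(\de)$, and is absorbed into $C(\de)$.

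For the lower bound, consider any rectifiable curve $\ga\colon[0,l_\ga]\to X$ from $x$ to $y$, parametrized by arc length. The four-point condition for Gromov $\de$-hyperbolicity applied to $x,y,z_0,\ga(t)$ gives
\[
d(x,y)+d(z_0,\ga(t))\le\max\{d(x,z_0)+d(y,\ga(t)),\,d(y,z_0)+d(x,\ga(t))\}+2\de,
\]
and combined with the bounds $d(x,\ga(t))\le t$ and $d(y,\ga(t))\le l_\ga-t$, minimization over $t\in[0,l_\ga]$ produces a point $t^*$ such that
\[
d(\ga(t^*),z_0)\le(x|y)_{z_0}+\tfrac12(l_\ga-d(x,y))+2\de.
\]
Since $t\mapsto d(\ga(t),z_0)$ is $1$-Lipschitz, the integrand $\phi(t):=e^{-\eps d(\ga(t),z_0)}$ satisfies $\phi(t)\ge\phi(t^*)e^{-\eps|t-t^*|}$, so integration over $[0,l_\ga]$ yields
\[
\int_\ga\phi\,ds\ge\phi(t^*)\cdot\frac{2-e^{-\eps t^*}-e^{-\eps(l_\ga-t^*)}}{\eps}.
\]
A case split on $\eps d(x,y)\lessgtr 1$ and on whether $t^*$ is interior or near an endpoint then delivers the required lower bound --- \emph{provided} $l_\ga$ does not greatly exceed $d(x,y)+1/\eps$.

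The hard part is controlling very long curves, where $l_\ga\gg d(x,y)+1/\eps$: the correction $\tfrac12(l_\ga-d(x,y))$ then dominates the upper bound for $d(\ga(t^*),z_0)$, and the naive estimate on $\phi(t^*)$ becomes useless. The standard fix is to iterate the four-point argument across the dyadic level sets $\{d(\cdot,z_0)=(x|y)_{z_0}+n/\eps\}_{n\ge 0}$, locating a low point on each subarc of $\ga$ produced by these crossings and summing the resulting contributions as a geometric series. The convergence of this geometric sum is exactly what forces the bound $\eps\le\eps_0(\de)$ from Theorem~\ref{thm-eps0}: at larger $\eps$ the exponential weight decays faster than the additive $O(\de)$-losses can be compensated at successive scales. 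This iteration step is where Gromov $\de$-hyperbolicity is used in its full strength, not merely through a single application of the four-point inequality.
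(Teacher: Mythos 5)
First, a point of reference: the paper does not prove this lemma at all --- it is quoted verbatim from Bonk--Heinonen--Koskela \cite[Lemma~4.10]{BHK-Unif} --- so the only question is whether your argument is a complete proof of that result. Your first half is sound. For the inequality $d_\eps(x,y)\le C(\de)\cdot$(middle term) the tripod computation along $[x,y]$ is correct, and in fact only the triangle-inequality half $d(\ga_0(t),z_0)\ge\max\{d(x,z_0)-t,\,d(y,z_0)-(d(x,y)-t)\}$ is needed there, so no $\de$-loss occurs in that direction. Your single application of the four-point condition to an arbitrary curve is also correct and, carried out, yields
\[
\int_\ga\rho_\eps\,ds\;\ge\;c\,e^{-2\eps\de}\,e^{-\eps(l_\ga-d(x,y))/2}\,
\frac{e^{-\eps(x|y)_{z_0}}}{\eps}\min\{1,\eps d(x,y)\},
\]
and you rightly flag that the factor $e^{-\eps(l_\ga-d(x,y))/2}$ destroys the estimate for long curves and that this is exactly where the hypothesis $\eps\le\eps_0(\de)$ must be used.

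The gap is in the proposed repair, which is where the entire difficulty of the lemma lives. Iterating the four-point argument on the subarcs of $\ga$ cut out by the level sets $\{d(\cdot,z_0)=(x|y)_{z_0}+n/\eps\}$ reproduces on each subarc $\sigma$, with endpoints $u,v$, exactly the same loss $e^{-\eps(l(\sigma)-d(u,v))/2}$ in terms of that subarc's own length, so nothing is gained by iteration alone; moreover ``locating a low point on each subarc'' gives an \emph{upper} bound for the distance of one point to $z_0$, whereas what the geometric series requires is a \emph{lower} bound for how much length the curve must expend at each level. The ingredient that actually makes the sum converge is the quantitative exponential divergence of geodesics in a $\de$-hyperbolic space: if $p\in[x,y]$ and a curve joining $x$ to $y$ avoids $B(p,R)$, then its length is at least $2^{(R-1)/\de}$ (see Bridson--Haefliger~\cite{BridHaef}; this is also what the Gehring--Hayman theorem of \cite[Theorem~5.1]{BHK-Unif} encapsulates, and the paper itself notes that $\eps_0(\de)$ is needed precisely for Gehring--Hayman). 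It is the competition between this growth $2^{R/\de}$ and the decay $e^{-\eps R}$ of $\rho_\eps$ that produces the constraint $\eps\de\lesssim\log 2$; the four-point inequality, however often applied, does not produce it. A correct completion would dichotomize on $h=\dist(z_0,\ga)$: if $h\le(x|y)_{z_0}+O(\de)+1/\eps$, an arc of length $\asymp1/\eps$ around the closest point already gives the bound; otherwise the divergence lemma forces $l_\ga\gtrsim2^{(h-(x|y)_{z_0})/\de}$ and one sums the contributions of the annuli $\{(x|y)_{z_0}+n/\eps\le d(\cdot,z_0)<(x|y)_{z_0}+(n+1)/\eps\}$ using that same lower bound on the length spent in each. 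Without this step the proof is not complete.
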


\begin{lem}\label{lem:dist-to-eps-bdry}
\textup{(\cite[Lemma~4.16]{BHK-Unif})}
Let $\eps>0$. If $x\in X$, then
\begin{equation}\label{eq-BHK-d-rho}
  \frac{e^{-\eps d(x,z_0)}}{e\eps}\le \dist_\eps(x,\partial_\eps X)=:d_\eps(x)
   \le C_0 \, \frac{e^{-\eps d(x,z_0)}}{\eps},
\end{equation}
where $C_0=2e^{\eps M}-1$. In particular, $\eps d_\eps(x) \simeq \rho_\eps(x)$, and
$x\to\bdy_\eps X$ with respect to $d_\eps$ if and only if $d(x,z_0)\to\infty$.
\end{lem}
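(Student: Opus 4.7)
The plan is to prove (\ref{eq-BHK-d-rho}) by exhibiting a single curve from $x$ to $\partial_\eps X$ with controlled $d_\eps$-length for the upper bound, and by estimating $\rho_\eps$ along any rectifiable curve via the triangle inequality for the lower bound. Throughout, the properness of $(X,d)$ guaranteed by the Hopf--Rinow Theorem~\ref{thm-Hopf-Rinow} is essential: it forces every sequence converging to $\partial_\eps X$ in $d_\eps$ to satisfy $d(\,\cdot\,,z_0)\to\infty$, since any $d$-bounded subsequence would admit a $d$-convergent (and hence $d_\eps$-convergent) subsequence with limit already in $X_\eps$.

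For the upper bound, rough starlikeness provides a geodesic ray $\gamma:[0,\infty)\to X$ from $z_0$ with $\dist(x,\gamma)\le M$; pick $x^*=\gamma(s^*)$ with $d(x,x^*)\le M$. The triangle inequality applied at $x^*$ gives $s^*=d(x^*,z_0)\ge d(x,z_0)-M$. Concatenate a geodesic $\alpha$ from $x$ to $x^*$ with the tail $\gamma|_{[s^*,\infty)}$. On $\alpha$ parametrized by $d$-arc length, $d(\alpha(t),z_0)\ge d(x,z_0)-t$, so
\[
l_\eps(\alpha)\le \rho_\eps(x)\int_0^M e^{\eps t}\,dt=\frac{e^{\eps M}-1}{\eps}\rho_\eps(x),
\]
while directly
\[
l_\eps(\gamma|_{[s^*,\infty)})=\int_{s^*}^\infty e^{-\eps t}\,dt=\frac{e^{-\eps s^*}}{\eps}\le \frac{e^{\eps M}}{\eps}\rho_\eps(x).
\]
Summing yields the asserted bound with $C_0=2e^{\eps M}-1$. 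The tail has $d(\gamma(t),z_0)\to\infty$, so by properness it has no $d$-accumulation point in $X$; combined with its finite $d_\eps$-length this makes it a Cauchy curve in $d_\eps$, and its endpoint in $\clXeps$ therefore lies in $\partial_\eps X$.

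For the lower bound, let $y\in X$ and let $\gamma$ be any rectifiable curve from $x$ to $y$, parametrized by $d$-arc length on $[0,\ell]$. Since $d(\gamma(t),z_0)\le d(x,z_0)+t$,
\[
l_\eps(\gamma)\ge \rho_\eps(x)\int_0^\ell e^{-\eps t}\,dt=\frac{\rho_\eps(x)}{\eps}(1-e^{-\eps \ell})\ge \frac{\rho_\eps(x)}{\eps}(1-e^{-\eps d(x,y)}).
\]
Taking the infimum gives $d_\eps(x,y)\ge(\rho_\eps(x)/\eps)(1-e^{-\eps d(x,y)})$. Choosing $y_n\in X$ with $d_\eps(x,y_n)\to d_\eps(x,\partial_\eps X)$ and invoking properness as above forces $d(x,y_n)\to\infty$; passing to the limit yields the lower bound with constant $1/\eps$, which is in fact stronger than the asserted $1/(e\eps)$. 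The equivalence $\eps d_\eps(x)\simeq\rho_\eps(x)$ and the characterization $x\to\partial_\eps X\Leftrightarrow d(x,z_0)\to\infty$ are then immediate consequences.

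The only mildly delicate point is securing the sharp constant $C_0=2e^{\eps M}-1$: one must exploit $s^*\ge d(x,z_0)-M$ to bound $e^{-\eps s^*}\le e^{\eps M}\rho_\eps(x)$, and the triangle inequality at $x^*$ is what supplies this cleanly. Apart from this piece of bookkeeping, every step is a direct computation with $\rho_\eps$ and the triangle inequality, and no feature of Gromov hyperbolicity beyond rough starlikeness is used.
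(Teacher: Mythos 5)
Your proof is correct. The paper itself gives no proof of this lemma --- it is quoted directly from Bonk--Heinonen--Koskela \cite[Lemma~4.16]{BHK-Unif} --- and your argument is essentially the standard one from that source: the upper bound by concatenating a geodesic from $x$ to the ray provided by rough starlikeness with the ray's tail (whose endpoint lies in $\partial_\eps X$ by properness), and the lower bound by integrating $\rho_\eps(\ga(t))\ge \rho_\eps(x)e^{-\eps t}$ along an arbitrary curve. Your bookkeeping in the lower bound, evaluating $\int_0^\ell e^{-\eps t}\,dt$ exactly rather than crudely bounding $\rho_\eps\ge e^{-1}\rho_\eps(x)$ on $B(x,1/\eps)$, even yields the sharper constant $1/\eps$ in place of the stated $1/(e\eps)$.
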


Note that one may
choose
$C_0 = 2e^{\eps_0     M}-1$
for it to  be independent of $\eps$, provided that $0<\eps\le \eps_0$.

\begin{cor}  \label{cor-comp-d-deps}
Assume that $0 < \eps \le \eps_0(\de)$.
Let $x,y\in X$. 
If $\eps d(x,y)\ge1$ then 
\begin{equation*}    
\exp(\eps d(x,y)) \simeq \frac{d_\eps(x,y)^2}{d_\eps(x)\,d_\eps(y)},
\end{equation*}
where the comparison constants depend only on $\de$, $M$ and $\eps_0$.
\end{cor}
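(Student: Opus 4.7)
The plan is to combine the two preceding lemmas in a straightforward algebraic way; the hypothesis $\eps d(x,y) \ge 1$ exists precisely to kill the $\min\{1,\eps d(x,y)\}$ term in Lemma~\ref{lem-BHK-dist-rho}, leaving a clean expression in terms of the Gromov product.

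First, under the assumption $\eps d(x,y) \ge 1$, Lemma~\ref{lem-BHK-dist-rho} reduces to
\[
d_\eps(x,y) \simeq \frac{\exp(-\eps(x|y)_{z_0})}{\eps},
\]
with comparison constant depending only on $\de$. Next, I would expand the Gromov product using its definition $(x|y)_{z_0} = \tfrac12[d(x,z_0)+d(y,z_0)-d(x,y)]$ to write
\[
\exp(-\eps(x|y)_{z_0}) = \rho_\eps(x)^{1/2}\,\rho_\eps(y)^{1/2}\,\exp\bigl(\tfrac{\eps}{2}d(x,y)\bigr),
\]
where $\rho_\eps(\cdot) = e^{-\eps d(\cdot,z_0)}$. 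Squaring the displayed estimate for $d_\eps(x,y)$ then gives
\[
d_\eps(x,y)^2 \simeq \frac{\rho_\eps(x)\,\rho_\eps(y)}{\eps^2}\,\exp(\eps d(x,y)).
\]

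Finally, I would apply Lemma~\ref{lem:dist-to-eps-bdry}, which gives $\rho_\eps(x) \simeq \eps d_\eps(x)$ and $\rho_\eps(y) \simeq \eps d_\eps(y)$ with constants depending only on $\eps_0$ and $M$ (the factor $C_0 = 2e^{\eps_0 M}-1$ is uniform in $\eps \in (0,\eps_0]$). Substituting yields
\[
d_\eps(x,y)^2 \simeq d_\eps(x)\,d_\eps(y)\,\exp(\eps d(x,y)),
\]
and rearranging gives the claimed comparison. All comparison constants track through the three applications and depend only on $\de$, $M$ and $\eps_0$, as stated.

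There is no real obstacle here; the only subtlety is making sure that the constant $C_0$ from Lemma~\ref{lem:dist-to-eps-bdry} is chosen uniformly in $\eps$ (as the remark after that lemma explicitly points out), so that the final comparison constants do not degenerate as $\eps \to 0$.
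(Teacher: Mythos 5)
Your proposal is correct and follows essentially the same route as the paper: kill the $\min$ term in Lemma~\ref{lem-BHK-dist-rho} using $\eps d(x,y)\ge 1$, square, expand the Gromov product, and divide by the two estimates $\rho_\eps(x)\simeq\eps d_\eps(x)$ and $\rho_\eps(y)\simeq\eps d_\eps(y)$ from Lemma~\ref{lem:dist-to-eps-bdry}. The remark about choosing $C_0$ uniformly in $\eps\le\eps_0$ is exactly the point the paper also makes.
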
   

\begin{proof}
Since $\eps d(x,y)\ge1$, \eqref{eq-BHK-dist} can be written as
\begin{equation}   \label{eq-dist-(x,y)2}
\exp(-2\eps(x|y)_{z_0}) \simeq (\eps d_\eps(x,y))^2,
\end{equation}
where the comparison constants depend only on $\de$.
Moreover, \eqref{eq-BHK-d-rho} gives
\[
\exp(-\eps d(x,z_0)) \simeq \eps d_\eps(x)
\quad \text{and} \quad
\exp(-\eps d(y,z_0)) \simeq \eps d_\eps(y)
\]
with comparison constants depending only on $M$ and $\eps_0$.
Dividing \eqref{eq-dist-(x,y)2} by the last two formulas, and using the
definition of $(x|y)_{z_0}$ concludes the proof.
\end{proof}

We now wish to show that subWhitney balls
in the uniformization
$X_\eps$
are contained in balls of a fixed radius with respect to
the Gromov hyperbolic metric $d$ of~$X$.

\begin{thm}  \label{thm-subWhitney-balls}
For all $0<\eps\le \eps_0(\de)$, 
$x\in X$ and $0<r\le \tfrac12 d_\eps(x)$, we have
\[
B\biggl(x,\frac{C_1r}{\rho_\eps(x)}\biggr)
\subset B_\eps(x,r)
\subset B\biggl(x,\frac{C_2r}{\rho_\eps(x)}\biggr),
\]
where $C_1=e^{-(1+\eps M)}$ and $C_2=2e(2e^{\eps M}-1)$.
If $d_\eps(x,y)< C_1d_\eps(x)/2C_2$, then
\[
\frac{\rho_\eps(x)}{C_2}d(x,y)<d_\eps(x,y)\le e^{1/e}\rho_\eps(x)d(x,y).
\]
\end{thm}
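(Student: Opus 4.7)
The plan is to derive both inclusions by controlling how much $\rho_\eps$ can vary along near-minimizing curves connecting $x$ to a nearby point, then integrate the identity $ds_\eps=\rho_\eps\,ds$. The subWhitney hypothesis $r\le\tfrac12 d_\eps(x)$ is exactly what forces every candidate curve to stay in a region where $\rho_\eps$ is comparable to $\rho_\eps(x)$.

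First I would note that on $B_\eps(x,\tfrac12 d_\eps(x))$ the function $\rho_\eps$ is bounded below by $\rho_\eps(x)/(2eC_0)$. Indeed, any $z$ in that ball satisfies $d_\eps(z)\ge\tfrac12 d_\eps(x)$, and combining the two-sided estimates in Lemma~\ref{lem:dist-to-eps-bdry} applied at $z$ and at $x$ gives
\[
\rho_\eps(z)\ge \frac{\eps d_\eps(z)}{C_0}\ge\frac{\eps d_\eps(x)}{2C_0}\ge\frac{\rho_\eps(x)}{2eC_0}.
\]
For the right inclusion $B_\eps(x,r)\subset B(x,C_2r/\rho_\eps(x))$, I would take $y\in B_\eps(x,r)$ and a curve $\ga\subset X$ from $x$ to $y$ with $l_\eps(\ga)<r\le\tfrac12 d_\eps(x)$. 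Every point of $\ga$ lies in $B_\eps(x,r)\subset B_\eps(x,\tfrac12 d_\eps(x))$, so the bound above is available; integrating yields
\[
l(\ga)\le \frac{2eC_0\,l_\eps(\ga)}{\rho_\eps(x)}<\frac{C_2 r}{\rho_\eps(x)},
\]
which bounds $d(x,y)$ as required.

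For the left inclusion $B(x,C_1r/\rho_\eps(x))\subset B_\eps(x,r)$, I would take $y$ with $d(x,y)<C_1r/\rho_\eps(x)$ and a curve $\ga\subset X$ from $x$ to $y$ of $d$-length strictly less than $C_1r/\rho_\eps(x)$. For every $z\in\ga$ the reverse triangle inequality in the exponent gives $\rho_\eps(z)\le e^{\eps d(x,z)}\rho_\eps(x)$. The hypothesis $r\le\tfrac12 d_\eps(x)$ combined with the upper estimate $d_\eps(x)\le C_0\rho_\eps(x)/\eps$ yields $\eps r/\rho_\eps(x)\le C_0/2$, hence $\eps d(x,z)<C_1C_0/2\le 1/e$, so $\rho_\eps(z)<e^{1/e}\rho_\eps(x)$ along $\ga$. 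Integrating gives $l_\eps(\ga)<e^{1/e}C_1r=e^{1/e-1-\eps M}r<r$, which bounds $d_\eps(x,y)$.

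For the refined estimate under $d_\eps(x,y)<C_1d_\eps(x)/(2C_2)$: the lower inequality follows at once from the right-inclusion argument applied with $r\downarrow d_\eps(x,y)$, which is legitimate because the hypothesis ensures $d_\eps(x,y)<\tfrac12 d_\eps(x)$. For the upper inequality, I would first combine the lower inequality with the hypothesis and with $d_\eps(x)\le C_0\rho_\eps(x)/\eps$ to deduce $d(x,y)<C_1C_0/(2\eps)<1/(e\eps)$; then any near-minimizing $d$-curve from $x$ to $y$ lies in $B(x,1/(e\eps))$, and on that ball the same reverse-triangle computation gives $\rho_\eps\le e^{1/e}\rho_\eps(x)$, so direct integration yields $d_\eps(x,y)\le e^{1/e}\rho_\eps(x)d(x,y)$. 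The only delicate point is bookkeeping the constants so that the exponent $C_1=e^{-(1+\eps M)}$ exactly absorbs the worst-case expansion $e^{1/e}$ and still leaves $e^{1/e}C_1<1$; no genuine obstacle beyond this is anticipated.
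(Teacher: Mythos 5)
Your argument is correct and follows essentially the same route as the paper's proof: both rest on Lemma~\ref{lem:dist-to-eps-bdry} to show that $\rho_\eps$ is comparable to $\rho_\eps(x)$ (with the explicit constants $C_1$, $C_2$) along any competitor curve confined to a subWhitney ball, and then integrate $ds_\eps=\rho_\eps\,ds$; the only structural difference is that the paper integrates along a $d_\eps$-geodesic of $\clXeps$ where you use near-minimizing curves in $X$. The one blemish is that your limiting argument $r\downarrow d_\eps(x,y)$ delivers only the non-strict bound $\rho_\eps(x)d(x,y)/C_2\le d_\eps(x,y)$ in the final display; to get the strict inequality of the statement, integrate along a single fixed curve (e.g.\ a geodesic, as in the paper) on which the pointwise estimate $\rho_\eps(z)>\rho_\eps(x)/C_2$ is strict.
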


\begin{remark}
As in Lemma~\ref{lem:dist-to-eps-bdry}, the constants $C_1$ and $C_2$
obtained for $\eps_0$ will do for $\eps<\eps_0$ as well.
The proof also shows that the
condition $0<r\le \tfrac12 d_\eps(x)$
  can be replaced by $0<r\le c_0 d_\eps(x)$
  for any fixed $0<c_0<1$,
but then $C_1$ and $C_2$ also
depend on $c_0$ and get progressively worse as $c_0$ approaches 1.
\end{remark}

\begin{proof}
Assume that $y\in B(x,C_1r/\rho_\eps(x))$ and 
let $\ga$ be a $d$-geodesic from $x$ to $y$.
The assumption $r\le\tfrac12 d_\eps(x)$ and \eqref{eq-BHK-d-rho}
then imply that for all $z\in\ga$,
\begin{equation}   \label{eq-def-C0}
d(x,z) \le d(x,y) <      \frac{C_1 r}{\rho_\eps(x)} \le
    \frac{C_1 d_\eps(x)}{2\rho_\eps(x)} \le 
    \frac{C_1 (2e^{\eps M}-1)}{2\eps} < \frac{C_1 e^{\eps M}}{\eps} =
    \frac{1}{\eps e}. 
\end{equation}
The triangle inequality then yields
$d(z,z_0)\ge d(x,z_0)-d(x,z) \ge d(x,z_0)-1/\eps e$ and hence
\[
\rho_\eps(z) = e^{-\eps d(z,z_0)} \le e^{1/e} \rho_\eps(x).
\]
From this and \eqref{eq-def-C0} it readily follows that
\begin{equation}\label{eq:Ural1}
d_\eps(x,y) \le \int_\ga \rho_\eps\,ds \le e^{1/e} \rho_\eps(x) d(x,y)
< C_1 e^{1/e} r < r.
\end{equation}

To see the other inclusion, assume that $d_\eps(x,y)<r \le \tfrac12 d_\eps(x)$ 
and let $\ga_\eps$ be 
a geodesic curve in $\clXeps$
connecting $x$ to $y$.
Then for all $z\in \ga_\eps$, we have by the triangle inequality that
\[
d_\eps(z) \ge d_\eps(x) - d_\eps(x,z) \ge d_\eps(x) - d_\eps(x,y)
> \tfrac12 d_\eps(x), 
\]
in particular $\ga_\eps\subset X_\eps$.
It now follows from~\eqref{eq-BHK-d-rho} that
\[ 
\rho_\eps(z) \ge \frac{\eps d_\eps(z)}{2e^{\eps M}-1} 
> \frac{\eps d_\eps(x)}{2(2e^{\eps M}-1)}
\ge \frac{\rho_\eps(x)}{C_2}, 
\] 
where $C_2$ is as in the statement of the theorem. This implies that
\begin{equation}\label{eq:Ammi1}
r > d_\eps(x,y) = \int_{\ga_\eps} \rho_\eps\,ds > \frac{\rho_\eps(x)}{C_2}d(x,y),
\end{equation}
and hence $B_\eps(x,r) \subset B(x,C_2r/\rho_\eps(x))$.

Finally, if $d_\eps(x,y)<C_1d_\eps(x)/2C_2$,
then from the last inclusion we see that 
$y\in B(x,C_1s/\rho_\eps(x))$ with $s=\tfrac12d_\eps(x)$.
Therefore we can apply~\eqref{eq:Ural1} and~\eqref{eq:Ammi1} to
obtain the last claim of the lemma.
\end{proof}

In this paper, the  letter $C$ will denote various positive
constants whose values may change
even within a line.
We 
write $Y \simle Z$ if there is an implicit
constant $C>0$ such that $Y \le CZ$,
and analogously
$Y \simge Z$ if $Z \simle Y$.
We also use the notation
$Y \simeq Z$ to mean
$Y \simle Z \simle Y$.
We will 
point out how the comparison constants
depend on various other constants related to
the metric measure spaces under study.

\section{Doubling property}
\label{sect-doubling}

\emph{In the rest of this paper, we will continue to assume that
$X$ is a locally compact roughly starlike Gromov hyperbolic space.
For general definitions and some results,
we will assume that $Y$ is a metric space equipped
with a Borel  measure $\nu$.}

\medskip

Just as for $X$, we will denote the metric on $Y$ by $d$, and balls in $Y$ by $B(x,r)$,
but it should always be clear from the context in which space
these concepts are taken.

\begin{deff}  
A Borel measure $\nu$, defined on a metric space $Y$, 
is \emph{globally doubling} if
\[
0 <\nu(B(x,2r))\le C_d \nu(B(x,r))<\infty
\]
whenever $x\in Y$ and $r>0$.
If this holds only for balls of radii $\le R_0$,
then we say that $\nu$
is \emph{doubling for balls of radii at most} $R_0$,
and also that $\nu$ is \emph{uniformly locally doubling}.
\end{deff}

The following result shows that the last condition
is independent of $R_0$, provided that $Y$ is quasiconvex.
Without assuming quasiconvexity this is not true as shown
by Example~\ref{ex-doubl-nonquasiconvex} below.

\begin{prop} \label{prop-doubling-R0-quasiconvex}
Assume that $Y$ is $L$-quasiconvex
and that $\nu$ is doubling on $Y$ for balls 
of radii at most $R_0$, with a doubling constant $C_d$.
Let $R_1>0$.
Then $\nu$ is doubling on $Y$ for balls 
of radii at most $R_1$
with a 
doubling constant depending only on $R_1/R_0$, $L$ and $C_d$.
\end{prop}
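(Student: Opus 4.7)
My plan is to handle the trivial case $r\le R_0$ directly from the hypothesis and reserve the real work for $R_0<r\le R_1$, where I would bootstrap the small-scale doubling by chaining along $L$-quasiconvex curves. The main obstacle will be counting the net required for the final cover without falling into circular reasoning.

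I would fix a small scale $r_0=R_0/(4L)$ so that $2r_0\le R_0$ and the hypothesis applies to all balls of radius $\le 2r_0$. For any $y\in B(x,2r)$, $L$-quasiconvexity yields a curve from $x$ to $y$ of length $\le 2Lr\le 2LR_1$; sampling it at arc-length increments of $r_0$ produces a chain $x=x_0,x_1,\ldots,x_N=y$ with $d(x_{i-1},x_i)\le r_0$ and $N\le N_0:=\lceil 8L^2R_1/R_0\rceil$. Since $B(x_i,r_0)\subset B(x_{i-1},2r_0)$, iterating the doubling hypothesis along the chain yields
\[
\nu(B(y,r_0))\le C_d^{N_0}\nu(B(x,r_0))\le C_d^{N_0}\nu(B(x,r)),
\]
where the last step uses $r_0\le R_0<r$.

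Next I would cover $B(x,2r)$ by the balls $B(z_i,r_0)$, $i\in I$, coming from a maximal $r_0$-separated net in $B(x,2r)$, so that
\[
\nu(B(x,2r))\le\sum_{i\in I}\nu(B(z_i,r_0))\le |I|\cdot C_d^{N_0}\nu(B(x,r)).
\]
The hard part is controlling $|I|$: a naive volume bound on $|I|$ would require an upper estimate on $\nu(B(x,3r))$, which is exactly the sort of inequality we are trying to prove, and so is circular. The remedy is to extract a purely \emph{metric} doubling at small scales from the measure doubling hypothesis: comparing disjoint $(r_0/2)$-balls inside $B(z,2r_0)$, whose measure is controlled by $C_d\nu(B(z,r_0))$, shows that any ball of radius $r_0$ contains at most $M=C_d^{O(1)}$ points that are $r_0$-separated. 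Combining this with the chain structure, since every $z_i$ lies in the image of a chain of depth $\le N_0$ starting at $x$, and at each step there are at most $M$ choices of successor net point, one obtains $|I|\le M^{N_0}$.

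Putting the three estimates together yields the desired inequality
\[
\nu(B(x,2r))\le (MC_d)^{N_0}\nu(B(x,r)),
\]
with $(MC_d)^{N_0}$ depending only on $R_1/R_0$, $L$ and $C_d$, as claimed.
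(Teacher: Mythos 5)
Your proof is correct, and it takes a genuinely different route from the paper's in the one place where the argument has real content, namely the covering-number bound at scales above $R_0$. The paper reduces to the case of a length space by passing to the inner metric $\din$ (using $\Bin(x,r)\subset B(x,r)\subset\Bin(x,Lr)$) and then bounds the covering function $\phi(n)$ of a ball of radius $nr$ by balls of radius $r$ through the inductive inequality $\phi(n+6)\le C_d^{7}\phi(n)$, which exploits the fact that in a length space the dilated balls $7B_j$ of a cover of $B(x,nr)$ cover $B(x,(n+6)r)$. You instead stay in the original metric and count net points directly by encoding each one as a chain of length $\le N_0$ in a bounded-degree graph of net points, which gives $|I|\le M^{N_0}$; your explicit identification of the circularity trap (a naive volume bound on $|I|$ would presuppose the conclusion) and your extraction of small-scale \emph{metric} doubling from the measure hypothesis are exactly the right moves, and match in spirit the paper's Lemma on covering by $C_d^7$ balls. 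Two small points to tighten: the chain points sampled from an $L$-quasiconvex curve live in $B(x,2Lr)$ rather than $B(x,2r)$, so the maximal separated net should be taken in the larger ball before projecting chain points onto it; and after projecting, consecutive net points are within $3r_0$ of each other, so the successor count $M$ should be the number of $r_0$-separated points in a ball of radius $3r_0$ (still $C_d^{O(1)}$, since $4r_0\le R_0$). Neither affects the shape of the final constant, which depends only on $R_1/R_0$, $L$ and $C_d$ as required. Your approach is more self-contained; the paper's yields the sharper intermediate statement (Lemma~\ref{lem-cover-B-nr}) for length spaces with better control of the constant, which it reuses elsewhere.
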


\begin{example} \label{ex-doubl-nonquasiconvex}
Let $X=([0,\infty) \times \{0,1\}) \cup (\{0\} \times [0,1])$
  equipped with the Euclidean distance and the measure
  $d\mu=w\, d\Lone$, where $\Lone$ is the Lebesgue measure and
\[
w(x,y)=\begin{cases}
1, & \text{if } y < 1, \\
e^x, & \text{if } y=1.
\end{cases}
\]
Then $X$ is a connected nonquasiconvex space and
$\mu$ is doubling for balls of radii at most $R_0$ if and only if $R_0 \le \frac12$.
This shows that the quasiconvexity
  assumption in Proposition~\ref{prop-doubling-R0-quasiconvex}
  cannot be dropped.
\end{example}

Before proving Proposition~\ref{prop-doubling-R0-quasiconvex}
we deduce the following lemmas.
In particular, Lemma~\ref{lem-cover-B-nr} covers
Proposition~\ref{prop-doubling-R0-quasiconvex}
under the extra assumption that $Y$ is a length space,
but with better control of the doubling constant than what is possible
in general quasiconvex spaces.

\begin{lem}   \label{lem-mu-doubl-X-doubl}
Assume that $\nu$ is doubling on $Y$ for
balls of radii at most $R_0$, with a doubling constant $C_d$.
Then every ball $B$ of radius $r\le \tfrac74R_0$ can be covered by at
most $C_d^7$ balls with centers in $B$ and radius $\frac17 r$.
\end{lem}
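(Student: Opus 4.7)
The plan is a standard Vitali-type counting, tuned so that every invocation of the local doubling property is legitimate within the range $r\le \tfrac74 R_0$. I begin by choosing a maximal $(r/7)$-separated set $\{x_1,\ldots,x_m\}\subset B$, so that the balls $B(x_i,r/7)$ cover $B$ while the balls $B(x_i,r/14)$ are pairwise disjoint and contained in $B(x,15r/14)\subset B(x,8r/7)$. Fixing an index $i^*$ minimizing $\nu(B(x_i,r/14))$, disjointness gives $m\,\nu(B(x_{i^*},r/14))\le \nu(B(x,8r/7))$, and the task reduces to showing $\nu(B(x,8r/7))\le C_d^{\,7}\,\nu(B(x_{i^*},r/14))$.

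The key observation is that the four radii $r/14,\ r/7,\ 2r/7,\ 4r/7$ all lie below $R_0$: this is equivalent to $4r/7\le R_0$, which in turn is exactly the hypothesis $r\le \tfrac74 R_0$. Thus four consecutive doublings, each with the smaller radius in the admissible range, are legitimate at any center. Applying them at $x_{i^*}$ yields $\nu(B(x_{i^*},8r/7))\le C_d^{\,4}\,\nu(B(x_{i^*},r/14))$, and since $x_{i^*}\in B(x,r)$ the triangle inequality gives $B(x,r/7)\subset B(x_{i^*},8r/7)$. Hence $\nu(B(x,r/7))\le C_d^{\,4}\,\nu(B(x_{i^*},r/14))$.

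To finish, I apply doubling three more times, now at the center $x$, along the admissible chain $r/7\to 2r/7\to 4r/7\to 8r/7$, obtaining $\nu(B(x,8r/7))\le C_d^{\,3}\,\nu(B(x,r/7))$. Chaining the two inequalities and dividing by $\nu(B(x_{i^*},r/14))$ produces $m\le C_d^{\,4+3}=C_d^{\,7}$, as desired. The only delicate point is the matching of chain length to threshold: a further doubling step would require $8r/7\le R_0$, which fails once $r>\tfrac78 R_0$, so the restriction $r\le\tfrac74R_0$ and the exponent $7=4+3$ are tight within this Vitali approach. No additional structural assumption (such as quasiconvexity) is needed, since the entire argument is a combinatorial packing estimate at two different centers ($x$ and $x_{i^*}$) using only the local doubling inequality at radii $\le R_0$.
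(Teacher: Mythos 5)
Your proof is correct and follows essentially the same route as the paper's: a maximal $(r/14)$-scale packing with centers in $B$, disjointness, and a chain of $4+3=7$ doublings split between a small-ball center and the center $x$, with the threshold $r\le\tfrac74 R_0$ ensuring every doubling step uses a radius $\le R_0$. The only cosmetic difference is that you pass through $B(x,r/7)$ where the paper uses $\tfrac{15}{112}B$, and you pick a measure-minimizing index $i^*$ where the paper bounds every ball of the packing uniformly from below (which also handles finiteness of the packing without assuming it); both yield $C_d^7$.
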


\begin{proof}  
Find a maximal pairwise disjoint collection of balls $B_j$ with centers in $B$
and radii $\tfrac{1}{14}r$.
Note that  for each $j$, 
\[
B_j \subset \tfrac{15}{14} B \quad \text{and} \quad
\tfrac{15}{112} B \subset \tfrac{127}{112}\cdot 14 B_j \subset 16B_j. 
\]
The doubling property then implies that
\[
\nu(\tfrac{15}{14} B) \le C_d^3 \nu(\tfrac{15}{112} B) \le C_d^7 \nu(B_j).
\]
From this and the pairwise disjointness of all $B_j$ we thus obtain
\[
\nu(\tfrac{15}{14} B) \ge \sum_j \nu(B_j)
  \ge \frac{1}{C_d^7} \nu(\tfrac{15}{14} B)  \sum_j 1,
\]
i.e.\ there are at most $C_d^7$ such balls.
As the balls $2B_j$ cover $B$, we are done.
\end{proof}

\begin{lem}   \label{lem-cover-B-nr}
Assume that $Y$ is a length  space
and that $\nu$ is doubling on $Y$ for balls 
of radii at most $R_0$, with a doubling constant $C_d$.
Let $n$ be a positive integer.
Then the following are true\/\textup{:}
\begin{enumerate}
\item  \label{it-doubl-along-geod}
If $x,x'\in Y$, $0<r\le R_0$ and $d(x,x')< nr$, then
\[
\nu(B(x',r)) \le C_d^n \nu(B(x,r)).
\]
\item \label{it-cover-nB}
Every ball $B$ of radius $nr$, with $r\le \tfrac14 R_0$, can be covered
by at most $C_d^{7(n+4)/6}$ balls of radius $r$, $n=1,2,\ldots$\,.
\end{enumerate}
In particular, for any $R_1>0$,
$\nu$ is doubling on $Y$ for balls of radii at most $R_1$
with a doubling constant depending only on $R_1/R_0$ and $C_d$.
\end{lem}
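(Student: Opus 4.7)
For part~\ref{it-doubl-along-geod}, I use the length-space property of $Y$. Since $d(x,x')<nr$, there is a rectifiable curve $\ga$ from $x$ to $x'$ with $l_\ga<nr$. Parametrizing $\ga$ by arc length and choosing intermediate points $x=x_0,x_1,\ldots,x_n=x'$ along $\ga$ with $d(x_{j-1},x_j)<r$ for each $j$, we get $B(x_j,r)\subset B(x_{j-1},2r)$. Since $r\le R_0$, the doubling hypothesis gives $\nu(B(x_j,r))\le\nu(B(x_{j-1},2r))\le C_d\nu(B(x_{j-1},r))$, and iterating $n$ times yields $\nu(B(x',r))\le C_d^n\nu(B(x,r))$.

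For part~\ref{it-cover-nB}, the strategy is to iterate Lemma~\ref{lem-mu-doubl-X-doubl}, exploiting the hypothesis $r\le R_0/4$ which ensures that any ball of radius $\le R_0/4$ lies well inside the validity range $(0,7R_0/4]$ of that lemma. Concretely, one application of Lemma~\ref{lem-mu-doubl-X-doubl} replaces a ball of radius $\rho\le 7R_0/4$ by at most $C_d^7$ sub-balls of radius $\rho/7\le R_0/4$, and subsequent iterations are then always valid because the radii only shrink. When $nr\le 7R_0/4$, starting from $B(x_0,nr)$ and iterating $k=\lceil\log_7 n\rceil$ times produces a cover by at most $C_d^{7k}$ balls of radius $nr/7^k\le r$. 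An elementary case analysis verifies $7\lceil\log_7 n\rceil\le 7(n+4)/6$ for all $n\ge 1$, giving the claimed exponent. When $nr>7R_0/4$ the initial ball is too large for a direct application, and I instead take a maximal $r$-separated set $\{y_i\}_{i=1}^N\subset B(x_0,nr)$ so that $\{B(y_i,r)\}$ covers while the disjoint balls $\{B(y_i,r/2)\}$ lie inside $B(x_0,(n+\tfrac12)r)$. Part~\ref{it-doubl-along-geod}, applied with radius $r/2\le R_0$, then gives $\nu(B(y_i,r/2))\ge C_d^{-2n}\nu(B(x_0,r/2))$, so that $N\le C_d^{2n}\nu(B(x_0,(n+\tfrac12)r))/\nu(B(x_0,r/2))$. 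The remaining ratio is bounded from above by combining iterated doublings (valid up to radius $R_0$) with repeated applications of Lemma~\ref{lem-mu-doubl-X-doubl} to sub-balls of radius $\le R_0/4$, and chaining each resulting sub-ball back to $B(x_0,r/2)$ via part~\ref{it-doubl-along-geod}. Collecting the exponents produces the stated estimate.

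The main obstacle is the large-$n$ regime, where the outer ball $B(x_0,nr)$ exceeds the validity range of Lemma~\ref{lem-mu-doubl-X-doubl} and the maximal separated set bound must be carefully coordinated with the chaining from part~\ref{it-doubl-along-geod} to keep the exponent linear in $n$. For the ``in particular'' statement, given any $R_1>0$ and a ball of radius $\rho\le R_1$, I apply part~\ref{it-cover-nB} with $r=R_0/4$ and $n=\lceil 8R_1/R_0\rceil$ to cover $B(x,2\rho)$ by at most $C_d^{7(n+4)/6}$ balls of radius $R_0/4$. Each such ball is compared to $B(x,R_0/4)\subset B(x,\rho)$ by part~\ref{it-doubl-along-geod}, producing a doubling constant for radii up to $R_1$ that depends only on $R_1/R_0$ and $C_d$.
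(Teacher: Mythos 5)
Part~\ref{it-doubl-along-geod} is correct and is essentially the paper's argument (a chain of balls along a curve of length $<nr$, each contained in the double of its predecessor). Your small-radius case of part~\ref{it-cover-nB} (iterating Lemma~\ref{lem-mu-doubl-X-doubl} when $nr\le \tfrac74 R_0$) is also fine and in fact gives a better, logarithmic-in-$n$ exponent in that regime.

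The genuine gap is in the regime $nr>\tfrac74 R_0$, which does occur (e.g.\ $r=\tfrac14 R_0$ and $n\ge 8$). Your maximal $r$-separated set argument reduces the covering number to the bound
$N\le C_d^{2n}\,\nu(B(x_0,(n+\tfrac12)r))/\nu(B(x_0,\tfrac12 r))$,
but you never actually control this volume ratio, and the means you describe cannot do so without circularity. Iterated doubling is only available up to radius $R_0$, so it takes you from $\nu(B(x_0,\tfrac12 r))$ to $\nu(B(x_0,R_0))$ at best; to get from there up to $\nu(B(x_0,(n+\tfrac12)r))$ you must cover the large ball by balls of radius $\le R_0$ and chain each back to $x_0$ via part~\ref{it-doubl-along-geod} --- but the number of balls in such a cover is exactly the covering number the lemma is asserting. (Bounding that count by a packing argument reintroduces the same large-to-small volume ratio.) In a merely uniformly locally doubling space this ratio can a priori grow exponentially in $n$, and establishing that it grows at a controlled exponential rate is essentially equivalent to the conclusion, not a tool available for proving it. The paper breaks this circularity with an additive recursion: if $\phi(n)$ balls $B_j$ of radius $r$ cover $B(x,nr)$, then (by the length-space property) the balls $7B_j$ cover $B(x,(n+6)r)$, and each $7B_j$ has radius $7r\le\tfrac74 R_0$, so Lemma~\ref{lem-mu-doubl-X-doubl} applies to it directly, giving $\phi(n+6)\le C_d^7\phi(n)$ and hence $\phi(n)\le C_d^{7(n+4)/6}$. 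You should replace your large-$n$ argument by this (or an equivalent) induction in which Lemma~\ref{lem-mu-doubl-X-doubl} is only ever applied to balls inside its range of validity and the radius is grown additively.
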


\begin{proof}
\ref{it-doubl-along-geod}
Connect $x$ and $x'$ by a curve of length $l_\ga < nr$.
Along this curve, we can find balls $B_j$ of radius $r$,
$j=0,1,\ldots,n$, such that $B_0=B(x,r)$, $B_n=B(x',r)$ and
$B_j\subset 2B_{j-1}$.
An iteration of the doubling property gives the desired estimate.

\ref{it-cover-nB}
Suppose that $\phi(n)$ is the smallest number  such that each
ball $B(x,nr)$ is covered by $\phi(n)$ balls $B_j$
of radius $r$.
Since $Y$ is a length space, the balls $7B_j$ cover $B(x,(n+6)r)$.
Using Lemma~\ref{lem-mu-doubl-X-doubl}, 
each $7B_j$ can in turn be covered by at most $C_d^7$
balls of radius $r$, which implies that 
$\phi(n+6)\le C_d^{7} \phi(n)$.
  As,  $\phi(1)=1$ and $\phi$ is nondecreasing,
  the statement follows by induction.
\end{proof}

\begin{proof}[Proof of Proposition~\ref{prop-doubling-R0-quasiconvex}]
We will use the inner metric $\din$, defined in \eqref{eq-din},
and denote balls with respect to $\din$ by $\Bin$.
It follows from the inclusions
\begin{equation} \label{eq-Bin}
    \Bin(x,r) \subset B(x,r) \subset \Bin(x,Lr),
\end{equation}
together with a repeated use of the doubling property for metric balls,
that $\nu$ is doubling  for inner balls  of radii at most $R_0$.
As $(X,\din)$ is a length space, it thus follows from Lemma~\ref{lem-cover-B-nr}
that $\nu$ is doubling  for inner balls  of radii at most $LR_1$.
Hence, using the inclusions \eqref{eq-Bin} again,
$\nu$ is doubling  for metric balls  of radii at most $R_1$.
\end{proof}

\section{The measure \texorpdfstring{$\mub$}{mubeta} is
  globally doubling
on \texorpdfstring{$X_\eps$}{X-epsilon}}\label{sect-doubling-of-mubeta}

\emph{Standing assumptions for this
section will be
    given after Example~\ref{ex:I-to-R}.}

  \medskip
  
Given a uniformly locally doubling measure $\mu$ on the Gromov hyperbolic space $X$,
we wish to obtain a globally doubling measure on its uniformization $X_\eps$. We do so as follows.

\begin{deff}  \label{def-muh-beta}
Assume
that $X$ is a
  locally compact roughly starlike Gromov hyperbolic space
  equipped with a Borel measure 
$\mu$, and that $z_0 \in X$.

Fix $\beta>0$, and set $\mu_\beta$ to be the measure on $X=X_\eps$ given by
\[
d\mu_\be = \rho_\be\,d\mu,
\quad \text{where } \rho_\be(x)=e^{-\beta d(x,z_0)}.
\]
We also extend this measure to $\clXeps$ by letting
$\mu_\be(\clXeps \setm X_\eps)=0$.
\end{deff}

Our aim in this section is to show that $\mu_\be$ is
a globally doubling measure on $\clXeps$,
under suitable assumptions
(see Theorem~\ref{thm-comp-largest-Whitney}).

Bonk--Heinonen--Koskela~\cite[Theorem~1.1]{BHK-Unif}
showed that there is a kind of duality between
Gromov hyperbolic spaces and bounded uniform domains,
see the introduction for further details.
Here we also equip these spaces with measures.
The following examples illustrate what happens
in a simple case.

\begin{example}\label{ex:R-to-I}
The Euclidean real line $X=\R$ is Gromov hyperbolic, because it is a
metric tree.
Since $\de=0$, any $\eps>0$ is allowed in the uniformization process,
by Theorem~\ref{thm-eps0}.
Setting $z_0=0$, we now determine what $X_\eps$ is.
For $x,y\in\R$, the uniformized metric is given by
\[
d_\eps(x,y)=\biggl| \int_x^y e^{-\eps|t|}\,dt \biggr| = 
   \begin{cases}   \displaystyle\frac{1}{\eps} |e^{-\eps|x|} - e^{-\eps |y|}|,   
                     &\text{if } xy\ge0, \\[3mm]
                   \displaystyle\frac{1}{\eps} (2 - (e^{-\eps|x|} + e^{-\eps|y|})),   
                     &\text{if }  xy\le0. 
\end{cases}
\]
With $y=0$ we get $d_\eps(x,0)=(1-e^{-\eps|x|})/\eps$. Hence the map 
$\Phi:X_\eps\to(-1/\eps,1/\eps)$ given by 
\[
\Phi(x)=\frac{1}{\eps}(1-e^{-\eps|x|})\sgn x
\]
is an isometry, identifying $X_\eps$ with the open interval $(-1/\eps,1/\eps)$.

However, when $X$ is equipped with the Lebesgue measure $\Lone$, 
the measure $\mu_\beta$ is not the Lebesgue measure on $(-1/\eps,1/\eps)$.
To determine $\mu_\be$, note that it is absolutely continuous with respect to
the Lebesgue measure $\Lone$ on the interval $(-1/\eps,1/\eps)$. 
So we compute the Radon--Nikodym
derivative of $\mu_\be$ with respect to $\Lone$. 
By symmetry, it suffices to consider $x>0$. Then
\[
d\mu_\be(\Phi(x))=e^{-\beta x}J_{\Phi^{-1}(\Phi(x))}\, d\Lone(\Phi(x))
  =e^{(\eps-\be)x}d\Lone(\Phi(x)).
\]
Substituting $\Phi(x)=z$ in the above, we get
\begin{equation}    \label{eq-mu-be-z}
d\mu_\beta(z)=(1-\eps z)^{-1+\beta/\eps}\, d\Lone(z) 
= (\eps d_\eps(z))^{-1+\beta/\eps}\, d\Lone(z),
\end{equation}
where $d_\eps(z)=1/\eps -z$ is the distance from $\Phi(x)=z\ge 0$ to
the boundary $\{\pm 1/\eps\}$ of $\Phi(X_\eps)$.

Similarly, if $X=\R$ is equipped with a weighted measure 
\[ 
d\mu(x)=w(x)\,d\Lone(x),
\] 
then as in~\eqref{eq-mu-be-z},
\begin{equation}   \label{eq-mube-weighted-R}
d\mu_\beta(z)  
  = (\eps d_\eps(z)) ^{-1+\beta/\eps} w(\Phi^{-1}(z))\, d\Lone(z).
\end{equation}
\end{example}

The following example reverses the procedure in
  Example~\ref{ex:R-to-I}.

\begin{example}\label{ex:I-to-R}
  The interval $X=(-1,1)$ is a uniform domain 
  and so, by
  Theorem~3.6 in Bonk--Heinonen--Koskela~\cite{BHK-Unif},
    it becomes a Gromov hyperbolic space when equipped with the
    quasihyperbolic metric $k$.
    The \emph{quasihyperbolic metric} is for $0\le y<z<1$ given by
\[
k(y,z)=\int_y^z\frac{1}{1-t}\, dt=\log\biggl(\frac{1-y}{1-z}\biggr),
\]
cf.\ Section~\ref{sect-hyperbolization}.
With $z_0=0$, by symmetry, we have $k(z,z_0)=\log(1/(1-|z|))$ for $z \in X$.
Hence we consider the map $\Psi:(-1,1)\to\R$ given by
\[
\Psi(z)=(\sgn z) \log \frac{1}{1-|z|},
\]
and see that $\Psi$ is an isometry between the Gromov hyperbolic space 
$(X,k)$ and the Euclidean line $\R$. 
By  Example~\ref{ex:R-to-I} with 
$\eps=1$, the
uniformization of $\R$ gives back the Euclidean interval $(-1,1)$.

We wish to find a measure $\mu$ on $(X,k)=\R$ such that the
weighted measure $\mu_\be$ given by Definition~\ref{def-muh-beta}
becomes the Lebesgue measure on  $(-1,1)$.
In view of \eqref{eq-mube-weighted-R} with $\eps=1$ and $\Phi=\Psi^{-1}$,
$\mu$ is
given by
$d\mu(x)=w(x)\,d\Lone(x)$, where
\[
w(x) = d_1(\Phi(x))^{1-\be} 
= (1-|\Phi(x)|)^{1-\be} = e^{(\be-1)|x|}.
\]
\end{example}

\medskip

\emph{In the rest of this  section, we assume that $X$ is a
  locally compact roughly starlike Gromov $\de$-hyperbolic space
  equipped with a measure 
$\mu$ which is doubling on $X$ for balls of radii at most $R_0$,
with a doubling constant $C_d$. We also fix a point $z_0 \in X$, let $M$ 
be the constant in the roughly starlike condition with respect to
$z_0$, and assume that 
\begin{equation}\label{eq:def-beta-naught}
0 < \eps \le \eps_0(\de)
\quad \text{and} \quad
\be > \be_0 := \frac{17 \log C_d}{3R_0}.
\end{equation}
Finally, we let $X_\eps$ be the uniformization of $X$ with
  uniformization center $z_0$.
}

\medskip 

In specific cases one may want to consider
how to optimally choose $R_0$, and the
corresponding $C_d$,  in the formula for $\beta_0$.
The factor $\frac{17}{3}$ comes from various estimates leading up
to the proof
of Proposition~\ref{prop-mu-be-xi},
and is not likely to be optimal.
The following example shows however that it is not too far from
optimal and that it cannot  be replaced by any constant $<1$.

\begin{example}
Let $X$ be the infinite regular $K$-ary metric tree, equipped
with the Lebesgue measure $\mu$,
as considered in
Bj\"orn--Bj\"orn--Gill--Shanmugalingam~\cite[Section~3]{BBGS}.
As it is a tree, any $\eps>0$ is allowed for uniformization.

If  $C_d(R)$ is the optimal doubling constant
for radii $\le R$, then a straightforward calculation shows
that
\[
     \lim_{R \to \infty} \frac{C_d(R)}{K^R}=1,
\]
and thus we are allowed, in this paper,
to use
any
\[
    \be > \frac{17 \log K^R}{3R} = \frac{17}{3} \log K. 
\]
In this specific case,
it was shown in \cite[Corollary~3.9]{BBGS}
that $\mu_\be$ is
globally
doubling and supports a global $1$-Poincar\'e inequality on $X_\eps$ whenever
$\be > \log K$.
For $\be  \le \log K$, $\mu_\be(X_\eps)= \infty$ and 
$\mu_\be$ cannot possibly be globally doubling on the bounded space $X_\eps$.
\end{example}

The following lemma gives us an estimate of $\mu_\be(B)$ for subWhitney balls
$B$.

\begin{lem}   \label{lem-mu-be-x}
Let $x\in X$ and $0<r\le\tfrac12 d_\eps(x)$.
Then
\[
\mu_\be(B_\eps(x,r)) \simeq \rho_\be(x)
   \mu\biggl(B\biggl(x,\frac{r}{\rho_\eps(x)}\biggr)\biggr)
\]
with comparison constants depending only on
$M$, $\eps$, $C_d$, $R_0$ and $\be$.
\end{lem}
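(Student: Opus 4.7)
\textbf{Proof proposal for Lemma~\ref{lem-mu-be-x}.}
The plan is to sandwich the uniformized ball $B_\eps(x,r)$ between two $d$-balls using Theorem~\ref{thm-subWhitney-balls}, then argue separately that (i) the density $\rho_\be$ is comparable to $\rho_\be(x)$ on these sandwiching balls, and (ii) their $\mu$-measures are all comparable by the doubling property of $\mu$.

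First, since $r\le\tfrac12 d_\eps(x)$, Theorem~\ref{thm-subWhitney-balls} gives
\[
B\biggl(x,\frac{C_1 r}{\rho_\eps(x)}\biggr) \subset B_\eps(x,r)
   \subset B\biggl(x,\frac{C_2 r}{\rho_\eps(x)}\biggr),
\]
where $C_1,C_2$ depend only on $\eps$ and $M$. For any $y$ in the outer ball, combining this with Lemma~\ref{lem:dist-to-eps-bdry}, I would estimate
\[
d(x,y) < \frac{C_2 r}{\rho_\eps(x)} \le \frac{C_2\, d_\eps(x)}{2\rho_\eps(x)}
   \le \frac{C_2 C_0}{2\eps} =: D,
\]
a constant depending only on $\eps$ and $M$. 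Then the triangle inequality $|d(y,z_0)-d(x,z_0)|\le d(x,y)<D$ yields
\[
e^{-\be D} \rho_\be(x) \le \rho_\be(y) \le e^{\be D} \rho_\be(x)
   \quad\text{on }B_\eps(x,r),
\]
so integrating $d\mu_\be=\rho_\be\,d\mu$ over $B_\eps(x,r)$ gives $\mu_\be(B_\eps(x,r))\simeq \rho_\be(x)\,\mu(B_\eps(x,r))$ with constants depending only on $\eps,M,\be$.

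It remains to show $\mu(B_\eps(x,r)) \simeq \mu(B(x,r/\rho_\eps(x)))$. From the sandwich above, this reduces to showing
\[
\mu\bigl(B(x, C_2 r/\rho_\eps(x))\bigr) \le C\, \mu\bigl(B(x, C_1 r/\rho_\eps(x))\bigr)
\]
for a suitable constant $C$, since $C_1<1<C_2$. This follows by iterating the doubling property of $\mu$ roughly $\lceil \log_2(C_2/C_1)\rceil$ times — but only if doubling is available at the scale $C_2 r/\rho_\eps(x)\le D$. Since $X$ is proper and geodesic by the Hopf--Rinow Theorem~\ref{thm-Hopf-Rinow} (hence $1$-quasiconvex), Proposition~\ref{prop-doubling-R0-quasiconvex} upgrades the hypothesis that $\mu$ is doubling at scale $R_0$ to doubling at scale $D$, with a doubling constant depending only on $D/R_0$ and $C_d$ (hence on $\eps,M,C_d,R_0$).

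Combining the two comparisons yields the desired estimate, with constants depending only on $M,\eps,C_d,R_0,\be$ as claimed. The only mildly delicate point is bookkeeping: one must track that $D$ depends only on $\eps$ and $M$ (via $C_0=2e^{\eps M}-1$ and $C_2$ from Theorem~\ref{thm-subWhitney-balls}), so that the enlargement of the doubling scale from $R_0$ to $D$ costs only a constant depending on the stated parameters. No genuine obstacle is expected — the lemma is essentially a change-of-variables statement made precise by the subWhitney sandwich.
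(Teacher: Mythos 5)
Your proposal is correct and follows essentially the same route as the paper: sandwich $B_\eps(x,r)$ between two $d$-balls via Theorem~\ref{thm-subWhitney-balls}, show $\rho_\be\simeq\rho_\be(x)$ there, and compare the radii by iterated doubling. The only (immaterial) differences are that the paper gets the density comparison from $d_\eps(y)\simeq d_\eps(x)$ and Lemma~\ref{lem:dist-to-eps-bdry} rather than from the triangle inequality in $(X,d)$, and it invokes Lemma~\ref{lem-cover-B-nr} directly instead of Proposition~\ref{prop-doubling-R0-quasiconvex} for the doubling at the enlarged scale.
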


\begin{proof}
  By Lemma~\ref{lem:dist-to-eps-bdry},
  we have for all $y\in B_\eps(x,r)$,
\begin{equation}  \label{eq-comp-rho-be-x-y}
\rho_\be(y) = \rho_\eps(y)^{\be/\eps} \simeq (\eps d_\eps(y))^{\be/\eps}
\simeq (\eps d_\eps(x))^{\be/\eps} \simeq \rho_\be(x).
\end{equation}
Moreover, Theorem~\ref{thm-subWhitney-balls} implies that 
\begin{equation}   \label{Beps-comp-B}
  B\biggl(x,\frac{C_1 r}{\rho_\eps(x)}\biggr)
    \subset B_\eps(x,r) \subset B\biggl(x,\frac{C_2 r}{\rho_\eps(x)}\biggr).
\end{equation}
This yields
\[
\mu_\be(B_\eps(x,r)) \simeq \rho_\be(x) \mu(B_\eps(x,r))
\simle \rho_\be(x) \mu\biggl(B\biggl(x,\frac{C_2 r}{\rho_\eps(x)}\biggr)\biggr)
\]
and similarly, 
$\mu_\be(B_\eps(x,r)) \simge \rho_\be(x) \mu(B(x,C_1 r/\rho_\eps(x))$.
Finally, Lemma~\ref{lem-cover-B-nr} shows that the last two balls in
$X$ have measure comparable to $\mu(B(x,r/\rho_\eps(x))$, which
concludes the proof.
\end{proof}

\begin{remark}
Lemma~\ref{lem-mu-be-x} implies that if $\mu_\beta$ is globally
doubling on $X_\eps$ then $\mu$ is uniformly locally doubling on $X$,
  i.e.\ the converse of Theorem~\ref{thm-main}\,\ref{ii-a} holds.
Indeed, if $0<r\le \frac{1}{4e\eps}$ and $x\in X$, then
$2r\rho_\eps(x) \le \tfrac12 d_\eps(x)$, by \eqref{eq-BHK-d-rho}.
Lemma~\ref{lem-mu-be-x}, with $r$ replaced by
$2r\rho_\eps(x)$ and  $r\rho_\eps(x)$, respectively, then gives
\[
\mu(B(x,2r)) \simeq
\frac{\mu_\be(B_\eps(x,2r\rho_\eps(x)))}{\rho_\be(x)}
\simeq \frac{\mu_\be(B_\eps(x,r\rho_\eps(x)))}{\rho_\be(x)}
\simeq \mu(B(x,r)).
\]
Similar arguments, combined with the arguments in the proof of
Lemma~\ref{lem-PI-for-subWhitney}, show that if $\mu_\be$ also supports
a global \p-Poincar\'e inequality on $X_\eps$ or $\clX_\eps$
then $\mu$ supports a
uniformly local \p-Poincar\'e inequality on $X$,
  i.e.\ the converse of Theorem~\ref{thm-main}\,\ref{ii-b} holds.
\end{remark}

We shall now estimate $\mu_\be(B)$ for balls $B$ centered at $\bdy_\eps X$
in terms of the (essentially) largest Whitney ball contained in $B$.
The existence of such balls is given by
Lemma~\ref{lem-ex-a0} below.

\begin{prop}  \label{prop-mu-be-xi}
Let $\xi\in\bdy_\eps X$ and $0<r\le2\diam_\eps X_\eps$.
Assume that $a_0>0$ and $z \in X$ are such that
$B_\eps(z,a_0r)\subset B_\eps(\xi,r)$ and $d_\eps(z)\ge 2a_0 r$.
Then, 
\[
\mu_\be(B_\eps(\xi,r))
\simeq \rho_\be(z) \mu(B(z,R_0)) 
\simeq \rho_\be(z)
\mu\biggl(B\biggl(z,\frac{a_0 r}{\rho_\eps(z)}\biggr)\biggr) 
\quad \text{and} \quad
\rho_\be(z) \simeq (\eps r)^{\be/\eps},
\]
where the comparison constants depend only on
$\de$, $M$, $\eps$, $C_d$,  $R_0$,  $\be$ and $a_0$.
\end{prop}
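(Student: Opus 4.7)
The first equivalence, $\rho_\be(z) \simeq (\eps r)^{\be/\eps}$, is almost immediate from the hypotheses: $2a_0 r \le d_\eps(z) \le d_\eps(z,\xi) \le r$ gives $d_\eps(z) \simeq r$, so Lemma~\ref{lem:dist-to-eps-bdry} yields $\rho_\eps(z) \simeq \eps r$, and raising to the power $\be/\eps$ finishes this. The same estimate shows that $a_0 r/\rho_\eps(z) \simeq a_0/\eps$ differs from $R_0$ by a fixed multiplicative constant, whence the uniformly local doubling of $\mu$ combined with Proposition~\ref{prop-doubling-R0-quasiconvex} gives $\mu(B(z, R_0)) \simeq \mu(B(z, a_0 r/\rho_\eps(z)))$. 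For the lower bound on $\mu_\be(B_\eps(\xi,r))$, monotonicity together with Lemma~\ref{lem-mu-be-x} applied to the subWhitney ball $B_\eps(z, a_0 r) \subset B_\eps(\xi, r)$ gives at once
\[
\mu_\be(B_\eps(\xi,r)) \ge \mu_\be(B_\eps(z, a_0 r)) \simeq \rho_\be(z)\,\mu\bigl(B(z, a_0 r/\rho_\eps(z))\bigr) \simeq \rho_\be(z)\,\mu(B(z, R_0)).
\]

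For the matching upper bound the plan is to partition $B_\eps(\xi, r)\cap X$ into dyadic shells
\[
D_k = \{y \in B_\eps(\xi,r) : 2^{-k-1} r < d_\eps(y) \le 2^{-k} r\}, \quad k=0,1,2,\ldots,
\]
on each of which $\rho_\be(y) \simeq (2^{-k}\eps r)^{\be/\eps}$ by Lemma~\ref{lem:dist-to-eps-bdry}. Taking a maximal $2^{-k-3}r$-separated packing $\{y_{k,j}\}_j \subset D_k$ (in $d_\eps$), the balls $B_\eps(y_{k,j}, 2^{-k-3}r)$ are pairwise disjoint while their concentric doubles cover $D_k$, and each of the latter is subWhitney since $d_\eps(y_{k,j}) \ge 2^{-k-1}r$. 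Lemma~\ref{lem-mu-be-x} then gives
\[
\mu_\be(D_k) \le \sum_j \mu_\be(B_\eps(y_{k,j}, 2^{-k-2}r)) \simeq (2^{-k}\eps r)^{\be/\eps} \sum_j \mu\bigl(B(y_{k,j}, c/\eps)\bigr),
\]
for a universal $c>0$ coming from $\rho_\eps(y_{k,j})\simeq \eps 2^{-k} r$.

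The remaining ingredients are bounds on the individual summands and on the cardinality $N_k$ of the packing, and both will come from Corollary~\ref{cor-comp-d-deps}. Since $d_\eps(z)\simeq r$, $d_\eps(y_{k,j}) \simeq 2^{-k} r$ and $d_\eps(z, y_{k,j}) \le 2r$, that corollary yields $d(z, y_{k,j}) \lesssim k/\eps$ (the small-distance regime is trivial). Using this with Lemma~\ref{lem-cover-B-nr}\,\ref{it-doubl-along-geod} produces $\mu(B(y_{k,j}, c/\eps)) \lesssim C_d^{Ck/(\eps R_0)}\,\mu(B(z, R_0))$; meanwhile, the same disjointness (after transporting via Theorem~\ref{thm-subWhitney-balls} and Lemma~\ref{lem-BHK-dist-rho} to essentially $1/\eps$-separated centers in $X$, all contained in $B(z, C'(k+1)/\eps)$) combined with doubling forces $N_k \lesssim C_d^{Ck/(\eps R_0)}$. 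Hence
\[
\mu_\be(D_k) \lesssim (\eps r)^{\be/\eps}\,\mu(B(z, R_0))\,C_d^{C''k/(\eps R_0)}\,2^{-k\be/\eps},
\]
and summing over $k$ produces a geometric series whose convergence is equivalent to $2^{\be/\eps} > C_d^{C''/(\eps R_0)}$, i.e.\ to $\be > C''\log C_d/(R_0\log 2)$. The main obstacle will be the precise bookkeeping of the constant $C''$ through the various doubling steps: each application of Lemma~\ref{lem-cover-B-nr}\,\ref{it-doubl-along-geod} and each use of the packing/doubling estimate contributes linearly in $k$ to the exponent of $C_d$, and the threshold $\be_0 = 17\log C_d/(3R_0)$ should emerge exactly from summing those contributions.
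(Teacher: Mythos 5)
Your overall strategy coincides with the paper's: the identity $\rho_\be(z)\simeq(\eps r)^{\be/\eps}$ and the lower bound via Lemma~\ref{lem-mu-be-x} applied to the subWhitney ball $B_\eps(z,a_0r)$ are exactly as in the paper, and your upper bound via shells near $\xi$ (you use dyadic shells $2^{-k}r$, the paper uses $e^{-n}r$; immaterial), the subWhitney localization on each shell, the distance estimate $\eps d(z,y_{k,j})\lesssim k$ from Corollary~\ref{cor-comp-d-deps}, and the concluding geometric series are all the same skeleton.

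The one genuine issue is the step you yourself flag as "the main obstacle": your counting scheme does not recover the threshold $\be_0=17\log C_d/3R_0$, and the proposition (under the section's standing assumption $\be>\be_0$) must hold for every such $\be$. Concretely, you bound $\sum_j\mu(B(y_{k,j},c/\eps))$ by $N_k\cdot\max_j\mu(B(y_{k,j},c/\eps))$, with $N_k$ obtained from a packing/disjointness argument. Each of the three ingredients (the packing bound for $N_k$, which itself needs both an upper bound for $\mu(B(z,Ck/\eps))$ and a lower bound for the individual $\mu(B(y_{k,j},c'/\eps))$, and then the upper bound for $\max_j$) pays the "distance from $z$" penalty $C_d^{ck/\eps R_0}$ separately, so the exponent $C''$ in $C_d^{C''k/\eps R_0}$ comes out strictly larger than $(17/3)\log 2$, and your series converges only for $\be$ above a threshold strictly larger than $\be_0$. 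The paper avoids this by a single covering: each shell lies in $B(z,(n+C)/\eps)$, which by Lemma~\ref{lem-cover-B-nr}\,\ref{it-cover-nB} is covered by $\simle C_d^{14n/3\eps R_0}$ balls of radius $R_0$ centered there, each of measure $\simle C_d^{n/\eps R_0}\mu(B(z,R_0))$ by Lemma~\ref{lem-cover-B-nr}\,\ref{it-doubl-along-geod}; the total exponent is $14/3+1=17/3$. If you prefer to keep your packing, the fix is to bound $\sum_j\mu(B(y_{k,j},c/\eps))$ in one stroke by $\mu(B(z,C(k+1)/\eps))$ using disjointness (after a bounded-overlap adjustment of the radii at the fixed scale $1/\eps$), rather than by $N_k\cdot\max_j$; that also yields $17/3$. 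As written, your argument proves the proposition only for a smaller range of $\be$.
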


\begin{proof}
For $n=1,2,\ldots$\,, define the boundary layers
\[
A_n = \{ x\in B_\eps(\xi,r): e^{-n}r \le d_\eps(x) \le e^{1-n}r\}.
\]
Corollary~\ref{cor-comp-d-deps} implies that for every $x\in A_n$,
either $\eps d(x,z)<1$ or
\begin{equation*}
\exp(\eps d(x,z)) \simeq \frac{d_\eps(x,z)^2}{d_\eps(x)d_\eps(z)}
\le \frac{(d_\eps(x,\xi)+d_\eps(\xi,z))^2}{2a_0 e^{-n} r^2}
\le \frac{2e^n}{a_0},
\end{equation*}
and hence $\eps d(x,z) < n+C$, where $C$ depends only on
$\de$, $M$, $\eps$ and $a_0$.

Using Lemma~\ref{lem-cover-B-nr}\,\ref{it-cover-nB}, 
we can thus cover each layer $A_n\subset B(z,(n+C)/\eps)$ by  
$N_n\simle C_d^{14n/3\eps R_0}$ balls $B_{n,j}$
with centers in $B(z,(n+C)/\eps)$ 
and radius $R_0$.
Since $X_\eps$ is geodesic,
Lemma~\ref{lem-cover-B-nr}\,\ref{it-doubl-along-geod} 
implies that
each of these balls satisfies 
\[
\mu(B_{n,j}) \simle C_d^{n/\eps R_0} \mu(B(z,R_0)),
\] 
Moreover,
as in~\eqref{eq-comp-rho-be-x-y}
we see that
$\rho_\be(z) = \rho_\eps(z)^{\be/\eps}
\simeq (\eps d_\eps(z))^{\be/\eps} \simeq (\eps r)^{\be/\eps}$ and
\[ 
\rho_\be(x) = \rho_\eps(x)^{\be/\eps} \simeq (\eps d_\eps(x))^{\be/\eps}
\simeq (e^{-n} \eps r)^{\be/\eps}
\quad \text{for all }x\in A_n.
\] 
It thus follows that
\[
\mu_\be(A_n \cap B_{n,j}) \simle (e^{-n} \eps r)^{\be/\eps} \mu(B_{n,j})
\simle C_d^{n/\eps R_0} e^{-n \be/\eps} \rho_\be(z)\mu(B(z,R_0))
\]
and hence for $\beta>\be_0=17(\log C_d)/3R_0$,
\begin{align*}
\mu_\be(B_\eps(\xi,r)) 
&\le \sum_{n=1}^\infty \sum_{j=1}^{N_n} \mu_\be(A_n \cap B_{n,j}) \\
& \simle \rho_\be(z)
\mu(B(z,R_0))
    \sum_{n=1}^\infty (C_d^{17/3R_0})^{n/\eps} e^{-n\be/\eps} 
    \simeq \rho_\be(z)
    \mu(B(z,R_0)).
\end{align*}
Since $a_0 r\le  \tfrac12 d_\eps(z)$, Lemma~\ref{lem-mu-be-x}
implies that
\[ 
\mu_\be(B_\eps(\xi,r)) \ge \mu_\be(B_\eps(z,a_0 r)) \simeq \rho_\be(z) 
\mu\biggl(B\biggl(z,\frac{a_0 r}{\rho_\eps(z)}\biggr)\biggr).
\]
By~\eqref{eq-BHK-d-rho} we see that 
\[
r > d_\eps(z) \ge \frac{\rho_\eps(z)}{e\eps},
\]
and hence, by the doubling property for $\mu$ on $X$,
\[
\mu\biggl(B\biggl(z,\frac{a_0 r}{\rho_\eps(z)}\biggr)\biggr)
\ge \mu\Bigl(B\Bigl(z,\frac{a_0}{e\eps}\Bigr)\Bigr)
\simeq \mu(B(z,R_0)).
\qedhere
\]
\end{proof}

The following lemma shows how to pick $z$ and $a_0$ in
Proposition~\ref{prop-mu-be-xi}.

\begin{lem}   \label{lem-ex-a0}
  Let $0<a_0< a:=\min\{\tfrac{1}{8},\frac{1}{6A}\}$,
    where $A=A(\de)$ is as in Theorem~\ref{thm-eps0}.
    Then
  for every $x\in \clXeps$
and every $0<r\le 2\diam_\eps X_\eps$ we can find a ball 
$B_\eps(z,a_0 r) \subset B_\eps(x,r)$ such that $d_\eps(z)\ge 2a_0r$.
\end{lem}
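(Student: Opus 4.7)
The plan is to split into two cases based on how deep $x$ already lies in $X_\eps$, using the $A$-uniform structure of $X_\eps$ from Theorem~\ref{thm-eps0} in the harder case.

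The easy case is $x \in X_\eps$ with $d_\eps(x) \ge 2 a_0 r$: simply take $z=x$, so that $B_\eps(z, a_0 r) \subset B_\eps(x,r)$ trivially (as $a_0 < 1$) and the depth requirement holds by assumption. In the remaining case $d_\eps(x) < 2 a_0 r$ (which includes $x \in \bdy_\eps X$), I would walk inward from $x$ along an $A$-uniform curve to a well-chosen $y \in X_\eps$. Write $D = \diam_\eps X_\eps$, so $r \le 2D$. First I would choose $y \in X_\eps$ so that $d_\eps(x,y)$ lies in a suitable range; the relevant target values are realizable because $\clXeps$ is geodesic (Theorem~\ref{thm-eps0}) and $\sup_{y \in X_\eps} d_\eps(x,y) \ge D/2$, the latter obtained by applying the triangle inequality to two points $y_1,y_2 \in X_\eps$ with $d_\eps(y_1,y_2)$ close to $D$.

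Second, I would take $\gamma$ to be the $A$-uniform curve from $x$ to $y$. When $x \in X_\eps$ this is given directly by the definition of uniformity; when $x \in \bdy_\eps X$, I would pick $x_n \in X_\eps$ with $x_n \to x$ and extract a subsequential limit of the $A$-uniform curves $\gamma_n$ from $x_n$ to $y$ via Arzel\`a--Ascoli (the $\gamma_n$ are $1$-Lipschitz in arc-length parameter, have uniformly bounded length, and take values in the compact space $\clXeps$). The uniformity inequality
\[
d_\eps(\gamma_n(t)) \ge \tfrac{1}{A}\min\{t,\, l_{\gamma_n}-t\}
\]
then passes to the limit. Third, I would set $z = \gamma(t_0)$ with $t_0 = 2 A a_0 r$; by the choice of $y$ arranging $l_\gamma \ge 4 A a_0 r$, we have $t_0 \le l_\gamma/2$, so uniformity yields $d_\eps(z) \ge t_0/A = 2 a_0 r$. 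Since $d_\eps(x,z) \le t_0 = 2 A a_0 r$, containment follows from
\[
d_\eps(x,z) + a_0 r \le (2A+1) a_0 r < \frac{2A+1}{6A}\, r \le r,
\]
using $a_0 < 1/(6A)$ and $A \ge 1$.

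The main obstacle is choosing $y$ so that $d_\eps(x,y)$ simultaneously satisfies the lower bound $\ge 4 A a_0 r$ (needed for $t_0 \le l_\gamma/2$) and the upper bound forcing $\gamma$ to remain useful near $x$, uniformly in $r \le 2D$. This is where both factors in $a=\min\{1/8,\,1/(6A)\}$ are needed: the factor $1/(6A)$ makes the uniform-curve walk fit inside $B_\eps(x,r)$, while the factor $1/8$ handles the regime where $r$ is close to $2D$ and $B_\eps(x,r)$ essentially equals $\clXeps$ (in which case one instead takes $z$ to be the midpoint of an $A$-uniform curve between two points $y_1,y_2 \in X_\eps$ with $d_\eps(y_1,y_2)$ close to $D$, so that $d_\eps(z) \ge d_\eps(y_1,y_2)/(2A)$ is large enough).
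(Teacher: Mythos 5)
Your overall strategy---take $z=x$ when $x$ is already deep, otherwise walk a distance $\asymp Aa_0r$ into $X_\eps$ along an $A$-uniform curve, and treat the regime where $B_\eps(x,r)$ swallows the whole space by exhibiting a single deep point---matches the paper's, and your first two cases are sound (the paper avoids your Arzel\`a--Ascoli step for $x\in\bdy_\eps X$ by applying the interior case to a nearby $x'\in X_\eps$ with the slightly smaller radius $a_0r/a$; that is exactly why the lemma assumes the strict inequality $a_0<a$). The genuine problem is your last case. The midpoint $z$ of an $A$-uniform curve between nearly diametrical points $y_1,y_2$ only satisfies $d_\eps(z)\ge d_\eps(y_1,y_2)/2A\approx D/2A$, where $D:=\diam_\eps X_\eps$, whereas the requirement is $d_\eps(z)\ge 2a_0r$, which can be as large as $4a_0D$ since $r$ may be as large as $2D$. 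This forces $a_0\le 1/8A$, strictly smaller than the stated threshold $a=\min\{\tfrac18,\tfrac1{6A}\}$ whenever $A>1$; so the claim that $d_\eps(y_1,y_2)/2A$ ``is large enough'' is not justified on the full stated range of $a_0$. The same loss of a factor $A$ already restricts your middle case, since the existence of $y$ with $d_\eps(x,y)\ge 4Aa_0r$ is only guaranteed when $4Aa_0r$ does not exceed $\sup_y d_\eps(x,y)$, and your lower bound for that supremum is only $D/2$.

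The missing ingredient is that the uniformization center $z_0$ is automatically deep with no loss of $A$: for any rectifiable curve $\ga$ from $z_0$ to $x$ one has $d(\ga(t),z_0)\le t$ in the original metric, hence $\int_\ga\rho_\eps\,ds\ge\int_0^{d(x,z_0)}e^{-\eps t}\,dt=(1-e^{-\eps d(x,z_0)})/\eps$, and letting $x\to\bdy_\eps X$ (so that $d(x,z_0)\to\infty$ by Lemma~\ref{lem:dist-to-eps-bdry}) gives $d_\eps(z_0)\ge 1/\eps\ge\tfrac12\diam_\eps X_\eps$. The paper therefore always walks from $x$ toward $z_0$: if the $A$-uniform curve to $z_0$ has $d_\eps$-length at least $\tfrac23 r$, it takes $z=\ga(\tfrac13 r)$, so that $d_\eps(z)\ge r/3A\ge 2a_0r$ using $a_0\le 1/6A$; otherwise it takes $z=z_0$ itself, so that $d_\eps(z_0)\ge\tfrac12 D\ge 2a_0r$ using $a_0\le\tfrac18$. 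That is precisely where the two terms of $\min\{\tfrac18,\tfrac1{6A}\}$ come from. As written, your argument proves the lemma only with the smaller threshold $1/8A$ (which would in fact suffice for the applications in the paper), not with the stated constant.
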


\begin{proof}
First, assume that $x\in X_\eps$.
By Theorem~\ref{thm-eps0},
there is an $A$-uniform curve $\ga$ from $x$ to
$z_0$, parametrized by arc length $ds_\eps$. 
If $l_\eps(\ga) \ge \tfrac23r$ then  for $z=\ga(\tfrac13r)$
  we have
\[
d_\eps(z) \ge \frac{r}{3A} \quad \text{and} \quad
B_\eps\Bigl(z,\frac{r}{6A}\Bigr)
  \subset B_\eps\Bigl(x,\frac{r}{3}+\frac{r}{6A}\Bigr) \subset B_\eps(x,r).
\]
Thus, any $a_0\le \frac{1}{6A}$ will do in this case.
If $l_\eps(\ga) < \tfrac23r $, then letting $z=z_0$ yields
\[
B_\eps(z,\tfrac13 r) \subset B_\eps(x,l_\eps(\ga)+\tfrac13 r) \subset B_\eps(x,r),
\]
and for $a_0 \le\tfrac18$,
\[
d_\eps(z) = d_\eps(z_0) \ge 4a_0 \diam_\eps X_\eps \ge 2a_0r.
\]
This proves the lemma for $x\in X_\eps$. 
For $x\in \bdy_\eps X$ and any $0<a_0<a$, choose $r'=a_0r/a$ 
and $x'\in X_\eps$  
sufficiently close to $x$ so that, with the corresponding~$z$, 
\[
B_\eps(z,a_0 r) =  
B_\eps(z,a r')\subset B_\eps(x',r') \subset B(x,r)
\quad \text{and} \quad d_\eps(z)\ge 2ar' = 2a_0r.
\qedhere
\]
\end{proof}

Lemma~\ref{lem-mu-be-x} and Proposition~\ref{prop-mu-be-xi} can be
summarized in the following result, which roughly says that in 
$(X_\eps,\mu_\be)$, the measure of every ball is comparable to the 
measure of the (essentially) largest Whitney ball contained in it.

\begin{thm}   \label{thm-comp-largest-Whitney}
The measure $\mu_\be$ is globally doubling on $\clXeps$.

Moreover, with $a_0$ and $z$ provided by Lemma~\ref{lem-ex-a0},
we have for every $x\in \clXeps$ and $0<r\le2\diam_\eps X_\eps$,
\begin{equation}   \label{eq-comp-B-eps-Whitney}
\mu_\be(B_\eps(x,r)) \simeq \mu_\be(B_\eps(z,a_0 r)),
\end{equation}
where the comparison constants depend only on
$\de$, $M$, $\eps$, $C_d$,  $R_0$,  $\be$ and $a_0$.
\end{thm}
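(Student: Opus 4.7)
The plan is to first establish the comparison \eqref{eq-comp-B-eps-Whitney}, and then derive the global doubling property from it by a direct application trick. The two key ingredients are already in place: Lemma~\ref{lem-mu-be-x} controls subWhitney balls in terms of a ball in $(X,\mu)$, and Proposition~\ref{prop-mu-be-xi} controls boundary-centered balls by the same Whitney-type expression.

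For the upper bound in \eqref{eq-comp-B-eps-Whitney} I would distinguish two regimes according to whether $B_\eps(x,r)$ is itself subWhitney. If $r \le \tfrac12 d_\eps(x)$ (which forces $x \in X_\eps$), Lemma~\ref{lem-mu-be-x} applies to both $B_\eps(x,r)$ and $B_\eps(z, a_0 r)$ (since $d_\eps(z) \ge 2 a_0 r$ from Lemma~\ref{lem-ex-a0}). The values of $\rho_\be$ at $x$ and $z$ are comparable because $d_\eps(x,z) < r \le \tfrac12 d_\eps(x)$, and both expressions then reduce to $\mu$-measures of two $d$-balls of comparable radii; comparability follows from doubling of $\mu$ via Proposition~\ref{prop-doubling-R0-quasiconvex}. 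Otherwise $r > \tfrac12 d_\eps(x)$, in which case I would reduce to the boundary case of Proposition~\ref{prop-mu-be-xi}: using compactness of $\clXeps$, pick $\xi \in \bdy_\eps X$ with $d_\eps(x,\xi) = d_\eps(x) < 2r$, so that $B_\eps(x,r) \subset B_\eps(\xi, 3r)$; the same $z$ still satisfies $B_\eps(z, a_0 r) \subset B_\eps(\xi, 3r)$ and $d_\eps(z) \ge 2 a_0 r$, which are precisely the hypotheses of Proposition~\ref{prop-mu-be-xi} applied to $B_\eps(\xi, 3r)$ with parameter $a_0/3$. Proposition~\ref{prop-mu-be-xi} then bounds $\mu_\be(B_\eps(x,r))$ from above by a constant times $\rho_\be(z)\mu(B(z, a_0 r/\rho_\eps(z)))$, which in turn is $\simeq \mu_\be(B_\eps(z, a_0 r))$ by Lemma~\ref{lem-mu-be-x} applied to $B_\eps(z, a_0 r)$. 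The lower bound $\mu_\be(B_\eps(x,r)) \ge \mu_\be(B_\eps(z, a_0 r))$ is immediate from the inclusion supplied by Lemma~\ref{lem-ex-a0}.

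Once \eqref{eq-comp-B-eps-Whitney} is proved, global doubling on $\clXeps$ follows formally by using the same point $z$ for both $B_\eps(x,r)$ and $B_\eps(x,2r)$. Fix $a_0$ so small that $a_0/2$ is still admissible in Lemma~\ref{lem-ex-a0}, and let $z$ satisfy the hypotheses for $B_\eps(x,r)$ with parameter $a_0$. The triangle inequality yields $B_\eps(z, a_0 r) \subset B_\eps(x, 2r)$, and $d_\eps(z) \ge 2 a_0 r = 2(a_0/2)(2r)$ shows that $z$ is also admissible for $B_\eps(x,2r)$ with parameter $a_0/2$. Applying \eqref{eq-comp-B-eps-Whitney} twice then gives
\[
\mu_\be(B_\eps(x,2r)) \simeq \mu_\be(B_\eps(z, a_0 r)) \simeq \mu_\be(B_\eps(x,r)),
\]
with constants depending on the comparison constants of \eqref{eq-comp-B-eps-Whitney} for both $a_0$ and $a_0/2$. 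Since $\mu_\be(\clXeps \setm X_\eps)=0$, doubling on $\clXeps$ is equivalent to doubling on $X_\eps$, and the proof is complete.

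The main obstacle is the transitional regime in \eqref{eq-comp-B-eps-Whitney} where $r$ is comparable to $d_\eps(x)$: one must ensure that Proposition~\ref{prop-mu-be-xi} can be invoked with a parameter only a bounded factor smaller than $a_0$, and that the Whitney-ball expression produced matches the one coming from Lemma~\ref{lem-mu-be-x}. The bookkeeping of $a_0$ versus $a_0/2$ and $a_0/3$ needs attention but does not cost more than a bounded multiplicative factor.
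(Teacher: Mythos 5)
Your proposal is correct and follows essentially the same route as the paper: the same case split between $r\le\tfrac12 d_\eps(x)$ (two applications of Lemma~\ref{lem-mu-be-x} plus comparability of the underlying $\mu$-balls) and $r>\tfrac12 d_\eps(x)$ (enlarging to $B_\eps(\xi,3r)$ and invoking Proposition~\ref{prop-mu-be-xi} with parameter $a_0/3$), and the same derivation of doubling by reusing the single Whitney ball $B_\eps(z,a_0r)$ for both $B_\eps(x,r)$ and $B_\eps(x,2r)$ via the parameters $a_0$ and $a_0/2$.
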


It follows directly that $\mu_\be$ is globally
  doubling also on $X_\eps$.
  The optimal doubling constants are the same,
  by Proposition~3.3 in Bj\"orn--Bj\"orn~\cite{BBnoncomp}.

\begin{proof}
We start by proving the measure estimate \eqref{eq-comp-B-eps-Whitney}.
Since $a_0 r\le \tfrac12 d_\eps(z)$,
Lemma~\ref{lem-mu-be-x} 
applied to $B_\eps(z,a_0 r)$ implies that
\begin{equation}   \label{eq-z-a0-r}
\mu_\be(B_\eps(z,a_0 r)) \simeq \rho_\be(z)
\mu\biggl(B\biggl(z,\frac{a_0 r}{\rho_\eps(z)}\biggr)\biggr).
\end{equation}
If $0<r\le\tfrac12 d_\eps(x)$ then by~\eqref{eq-comp-rho-be-x-y},
\eqref{Beps-comp-B} and Lemma~\ref{lem:dist-to-eps-bdry},
\[
\rho_\be(z) \simeq \rho_\be(x) 
\quad \text{and} \quad
d(x,z) \le \frac{C_2 r}{\rho_\eps(x)} \simle \frac12.
\] 
Lemma~\ref{lem-cover-B-nr} then implies  that 
\[
  \mu\biggl(B\biggl(z,\frac{a_0 r}{\rho_\eps(z)}\biggr)\biggr)
  \simeq \mu\biggl(B\biggl(z,\frac{r}{\rho_\eps(z)}\biggr)\biggr)
  \simeq \mu\biggl(B\biggl(x,\frac{r}{\rho_\eps(x)}\biggr)\biggr),
\]
and another application of Lemma~\ref{lem-mu-be-x}, this time to $B_\eps(x,r)$, 
proves \eqref{eq-comp-B-eps-Whitney} in this case.

If $r\ge\tfrac12 d_\eps(x)$ then $B_\eps(x,r) \subset B_\eps(\xi,3r)$
for some $\xi\in\bdy_\eps X$.
Proposition~\ref{prop-mu-be-xi} 
(with $a_0$ replaced by $\frac13 a_0$) 
then implies
\[
\mu_\be(B_\eps(\xi,3r)) \simeq 
\rho_\be(z)
\mu\biggl(B\biggl(z,\frac{a_0 r}{\rho_\eps(z)}\biggr)\biggr),
\]
which together with \eqref{eq-z-a0-r} proves 
\eqref{eq-comp-B-eps-Whitney} also in this case.

To conclude the doubling property, use the Whitney ball
$B_\eps(z,a_0 r)$ for both $B_\eps(x,r)$ and
$B_\eps(x,2r)$, with constants $a_0$ and $a'_0=\frac12 a_0$, respectively.
Since $d_\eps(z)\ge 2a_0r = 2a'_0 \cdot 2r$, we have by
\eqref{eq-comp-B-eps-Whitney},
first used with $a_0'$ and then with $a_0$,
\[
\mu_\be(B_\eps(x,2r)) \simeq \mu_\be(B_\eps(z,2a'_0r)) 
= \mu_\be(B_\eps(z,a_0 r)) \simeq \mu_\be(B_\eps(x,r)).\qedhere
\]
\end{proof}

We conclude this section with an estimate of the lower and upper
dimensions for the measure $\mu_\be$ at $\bdy_\eps X$.

\begin{lem}   \label{lem-s-on-bdy}
Let 
\[
s_\limpm =\frac{\be}{\eps} \pm \frac{\log C_d}{\eps R_0}.
\]
Then for all $\xi\in\bdy_\eps X$ and all\/ $0<r\le r'\le 2\diam_\eps X_\eps$,
\[
\Bigl( \frac{r}{r'} \Bigr)^{s_\limplus} \simle
\frac{\mu_\be(B_\eps(\xi,r))}{\mu_\be(B_\eps(\xi,r'))} 
\simle \Bigl( \frac{r}{r'} \Bigr)^{s_\limminus},
\]
with comparison constants depending only on
$\de$, $M$, $\eps$, $C_d$,  $R_0$,  $\be$ and the constant $a_0$
  from Theorem~\ref{thm-eps0}.
\end{lem}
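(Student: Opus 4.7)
The plan is to reduce everything to Proposition~\ref{prop-mu-be-xi} applied separately at the two radii, and then to compare the resulting ``Whitney'' base points through a chain of $R_0$-balls. First, apply Lemma~\ref{lem-ex-a0} to $\xi$ with the radii $r$ and $r'$ to obtain points $z,z'\in X$ satisfying $d_\eps(z)\ge 2a_0r$, $d_\eps(z')\ge 2a_0r'$, $B_\eps(z,a_0r)\subset B_\eps(\xi,r)$, and $B_\eps(z',a_0r')\subset B_\eps(\xi,r')$. Applying Proposition~\ref{prop-mu-be-xi} at both radii then gives
\[
\frac{\mu_\be(B_\eps(\xi,r))}{\mu_\be(B_\eps(\xi,r'))}
\simeq \Bigl(\frac{r}{r'}\Bigr)^{\be/\eps}\cdot
        \frac{\mu(B(z,R_0))}{\mu(B(z',R_0))},
\]
so the task reduces to estimating the ratio of $\mu(B(z,R_0))$ and $\mu(B(z',R_0))$ in terms of $r/r'$.

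The key step is to control $d(z,z')$ in the hyperbolic metric. Since $z,z'\in B_\eps(\xi,r')$, the triangle inequality in $\clXeps$ yields $d_\eps(z,z')\le 2r'$. Assuming $\eps d(z,z')\ge1$, Corollary~\ref{cor-comp-d-deps} gives
\[
\exp(\eps d(z,z'))\simeq \frac{d_\eps(z,z')^2}{d_\eps(z)\,d_\eps(z')}
 \simle \frac{(r')^2}{r\cdot r'}=\frac{r'}{r},
\]
whence $d(z,z')\le n R_0$ with $n\simeq \log(r'/r)/(\eps R_0)+C$. Applying Lemma~\ref{lem-cover-B-nr}\,\ref{it-doubl-along-geod} in both directions (the chain argument is symmetric in $z$ and $z'$) then produces
\[
C_d^{-n}\simle \frac{\mu(B(z,R_0))}{\mu(B(z',R_0))}\simle C_d^{n},
\]
and since $C_d^{n}\simeq (r'/r)^{\log C_d/(\eps R_0)}$, combining with the prefactor $(r/r')^{\be/\eps}$ yields exactly the upper bound with exponent $s_\limminus=\be/\eps-\log C_d/(\eps R_0)$ and the lower bound with exponent $s_\limplus=\be/\eps+\log C_d/(\eps R_0)$.

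The remaining case $\eps d(z,z')<1$ is handled directly: then $|d(z,z_0)-d(z',z_0)|<1/\eps$, so $\rho_\eps(z)\simeq \rho_\eps(z')$, and Lemma~\ref{lem:dist-to-eps-bdry} combined with $d_\eps(z)\simeq r$ and $d_\eps(z')\simeq r'$ forces $r\simeq r'$; both the measure ratio and $(r/r')^{s_\limpm}$ are then $\simeq 1$, and the claim is trivial.

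The main technical point I anticipate is bookkeeping of the implicit constants so that the dependence on $C_d$ appears only in the exponents $\pm\log C_d/(\eps R_0)$, while the (potentially large) multiplicative constants from Proposition~\ref{prop-mu-be-xi}, Corollary~\ref{cor-comp-d-deps} and the $+C$ in the bound on $n$ are absorbed into the ambient $\simeq$-constants depending only on $\de$, $M$, $\eps$, $C_d$, $R_0$, $\be$ and $a_0$. No new ideas beyond those already in Sections~\ref{sect-Gromov}--\ref{sect-doubling-of-mubeta} appear to be needed.
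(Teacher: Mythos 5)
Your proposal is correct and follows essentially the same route as the paper's proof: Proposition~\ref{prop-mu-be-xi} at both radii, Corollary~\ref{cor-comp-d-deps} to bound $\eps d(z,z')$ by $C+\log(r'/r)$, and Lemma~\ref{lem-cover-B-nr}\,\ref{it-doubl-along-geod} applied symmetrically in $z$ and $z'$. The only cosmetic difference is that the paper absorbs the case $\eps d(z,z')<1$ by noting the bound $d(z,z')\le\frac{1}{\eps}(C+\log(r'/r))$ holds trivially there, rather than treating it as a separate case.
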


Note that $0< s_\limminus \le s_\limplus$, because $\be> \be_0$,
where $\be_0$ is as in~\eqref{eq:def-beta-naught}.

\begin{proof}
Proposition~\ref{prop-mu-be-xi} and Lemma~\ref{lem-ex-a0} imply that
  there are $z,z' \in X$ such that
\begin{equation}   \label{eq-mube-mu-r0}
\mu_\be(B_\eps(\xi,r)) \simeq (\eps r)^{\be/\eps} \mu(B(z,R_0))
\quad \text{and} \quad 
\mu_\be(B_\eps(\xi,r')) \simeq (\eps r')^{\be/\eps} \mu(B(z',R_0)),
\end{equation}
where 
\begin{alignat*}{2}
  B_\eps(z,a_0r)&\subset B_\eps(\xi,r),
& \quad   d_\eps(z)& \ge 2a_0 r, \\
B_\eps(z',a_0r')&\subset B_\eps(\xi,r'),
& \quad  d_\eps(z')& \ge 2a_0 r'.
\end{alignat*}
From Corollary~\ref{cor-comp-d-deps} we conclude that
if $\eps d(z,z') \ge 1$ then
\[
\exp(\eps d(z,z')) \simeq \frac{d_\eps(z,z')^2}{d_\eps(z)d_\eps(z')}
\le \frac{(2r')^2}{(2a_0 r)(2a_0 r')} = \frac{r'}{a_0^2 r},
\]
and hence $d(z,z') \le \frac{1}{\eps}(C+\log(r'/r))$
holds regardless of the value of $d(z,z')$.
Lemma~\ref{lem-cover-B-nr}\,\ref{it-doubl-along-geod} 
with $n = \lceil d(z,z')/R_0 \rceil$
  (the smallest integer  $\ge d(z,z')/R_0$)
then implies that
\begin{equation}      \label{eq-comp-z-z'}
\frac{\mu(B(z,R_0))}{\mu(B(z',R_0))} \ge C_d^{-n}  
\simge \Bigl( \frac{r}{r'} \Bigr)^{(\log C_d)/\eps R_0},
\end{equation}
which together with \eqref{eq-mube-mu-r0} proves 
the first inequality in the lemma.
The second inequality follows similarly by interchanging $z$ and $z'$
in \eqref{eq-comp-z-z'}.
\end{proof}

\section{Upper gradients and Poincar\'e inequalities}\label{sect-defn-of-PI-ug}

\emph{We assume in this section that $1 \le  p<\infty$ 
and that $Y=(Y,d,\nu)$ is a metric space equipped
 with a  complete  Borel  measure $\nu$ 
 such that $0<\nu(B)<\infty$ for all balls $B \subset Y$.}

\medskip

We follow Heinonen and Koskela~\cite{HeKo98} in introducing
upper gradients as follows (in~\cite{HeKo98} they are referred to 
as very weak gradients).

\begin{deff} \label{deff-ug}
A Borel function $g:Y \to [0,\infty]$ is an \emph{upper gradient} 
of an extended real-valued function $u$
on $Y$ if for all 
arc length parametrized curves  
$\gamma: [0,l_{\gamma}] \to Y$,
\begin{equation} \label{ug-cond}
        |u(\gamma(0)) - u(\gamma(l_{\gamma}))| \le \int_{\gamma} g\,ds,
\end{equation}
where we follow the convention that the left-hand side is considered to be $\infty$ 
whenever at least one of the terms therein is $\pm \infty$.
If $g$ is a nonnegative measurable function on $Y$
and if (\ref{ug-cond}) holds for \p-almost every curve (see below), 
then $g$ is a \emph{\p-weak upper gradient} of~$u$. 
\end{deff}

We say that a property holds for \emph{\p-almost every curve}
if it fails only for a curve family $\Ga$ with \emph{zero \p-modulus}, 
i.e.\ there is a Borel function $0\le\rho\in L^p(Y)$ such that 
$\int_\ga \rho\,ds=\infty$ for every curve $\ga\in\Ga$.
The \p-weak upper gradients were introduced in
Koskela--MacManus~\cite{KoMc}. It was also shown therein
that if $g \in \Lploc(Y)$ is a \p-weak upper gradient of $u$,
then one can find a sequence $\{g_j\}_{j=1}^\infty$
of upper gradients of $u$ such that $\|g_j-g\|_{L^p(Y)} \to 0$.

If $u$ has an upper gradient in $\Lploc(Y)$, then
it has a \emph{minimal \p-weak upper gradient} $g_u \in \Lploc(Y)$
in the sense that for every \p-weak upper gradient $g \in \Lploc(Y)$ of $u$ we have
$g_u \le g$ a.e., see Shan\-mu\-ga\-lin\-gam~\cite{Sh-harm}
(or \cite{BBbook} or \cite{HKST}).
The minimal \p-weak upper gradient is well defined
up to a set of measure zero.

\begin{deff} \label{def.PI.}
$Y$ (or $\nu$)  supports a global \emph{\p-Poincar\'e inequality} if
there exist constants
$\lambda \ge 1$ \textup{(}called \emph{dilation}\textup{)}
and $\CPI>0$
such that for all balls $B \subset Y$, 
all bounded measurable functions $u$ on $Y$ and all upper gradients $g$ of $u$, 
\begin{equation} \label{PI-ineq}
        \vint_{B} |u-u_B| \,d\nu
        \le \CPI \diam(B) \biggl( \vint_{\lambda B} g^{p} \,d\nu \biggr)^{1/p},
\end{equation}
where $ u_B:=u_{B,\nu} 
 :=\vint_B u \,d\nu 
:= \int_B u\, d\nu/\nu(B)$.

If this holds only for balls $B$ of radii $\le R_0$,
then we say that $\nu$
supports a  \emph{\p-Poincar\'e inequality for balls of radii at most} $R_0$,
  and also that $Y$ (or $\nu$)
  supports a \emph{uniformly local \p-Poincar\'e inequality}.
\end{deff}

Multiplying bounded measurable functions by suitable cut-off functions 
and truncating integrable functions shows that one may
  replace ``bounded measurable'' by ``integrable'' in the definition.
  On the other hand, the proofs of \cite[Lemma~8.1.5 and Theorem~8.1.53]{HKST}
  show that
\eqref{PI-ineq} can equivalently be required  for all (not necessarily bounded)
  measurable 
functions $u$ on $\la B$ and all upper (or \p-weak upper) gradients
$g$ of $u$.
See also
\cite[Proposition~4.13]{BBbook}, \cite[Theorem~8.1.49]{HKST},
Haj\l asz--Koskela~\cite[Theorem~3.2]{HaKo},
Heinonen--Koskela~\cite[Lemma~5.15]{HeKo98}
and Keith~\cite[Theorem~2]{Keith}
for further equivalent versions.

\begin{thm} \label{thm-PI-arb-R0}
Assume that $\nu$ is doubling and
supports a \p-Poincar\'e inequality,
both properties holding for balls of radii at most $R_0$.
Also assume that $Y$ is $L$-quasiconvex and that $R_1 >0$.

Then $\nu$ 
supports a \p-Poincar\'e inequality, 
with dilation $L$,
for balls of radii at most $R_1$.
\end{thm}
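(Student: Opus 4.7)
The plan is to upgrade the local \p-Poincar\'e inequality via a chaining argument in the length space provided by quasiconvexity, in exact parallel with Proposition~\ref{prop-doubling-R0-quasiconvex}. First I would pass to the inner metric $\din$ from \eqref{eq-din}; since $Y$ is $L$-quasiconvex, $d\le\din\le Ld$, so balls with respect to the two metrics are biLipschitzly comparable, and by Lemma~4.43 in \cite{BBbook} upper gradients are the same with respect to $d$ and $\din$. Hence the local doubling and local \p-Poincar\'e hypotheses transfer (with enlarged constants) to $(Y,\din)$, and the problem reduces to establishing a \p-Poincar\'e inequality for $\din$-balls of radii up to $LR_1$. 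The dilation factor $L$ in the final conclusion is then accounted for by the inclusion $B(x_0,r)\subset\Bin(x_0,Lr)$ when translating the $\din$-inequality back to $d$-balls, together with the doubling comparability $\nu(\Bin(x_0,Lr))\simeq\nu(B(x_0,r))$.

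In the length space $(Y,\din)$ I would then carry out the standard telescoping chain argument. Fix $B=\Bin(x_0,r)$ with $r\le LR_1$ and choose a radius $\rho$ comparable to $R_0/\la_0$, where $\la_0$ is the dilation in the inner-metric local \p-Poincar\'e inequality from the previous step. Let $B_0=\Bin(x_0,\rho)$. For each $x\in B$ the length property supplies a curve $\gamma_x\subset B$ from $x_0$ to $x$ of length essentially $\din(x_0,x)\le r$. Cover $\gamma_x$ by inner balls $B_{x,j}=\Bin(z_j,\rho)$ with consecutive centers at distance $\rho/2$, so that $B_{x,j+1}\subset 2B_{x,j}$. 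Applying the local inner-metric \p-Poincar\'e inequality to each $2B_{x,j}$ yields
\[
|u_{B_{x,j}}-u_{B_{x,j+1}}|\le C\rho\biggl(\vint_{2\la_0 B_{x,j}}g^{p}\,d\nu\biggr)^{1/p},
\]
and telescoping along $\gamma_x$ controls $|u(x)-u_{B_0}|$, at Lebesgue points of $u$, by an integral of $g$ along the enlarged chain.

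Integrating over $x\in B$, and bounding the overlap multiplicity of the chains by the (now globally uniform, thanks to Proposition~\ref{prop-doubling-R0-quasiconvex}) doubling property of $\nu$ on $\din$-balls of radii up to $r$, yields
\[
\vint_B|u-u_{B_0}|\,d\nu\le Cr\biggl(\vint_{B'}g^{p}\,d\nu\biggr)^{1/p}
\]
for some ball $B'$ contained in a controlled enlargement of $B$. Replacing $u_{B_0}$ by $u_B$ costs at most a factor of two, and translating back to metric balls via the inclusions $B(x_0,r/L)\subset\Bin(x_0,r)\subset B(x_0,r)$ then furnishes the \p-Poincar\'e inequality with dilation $L$ for $d$-balls of radii up to $R_1$.

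The main obstacle is the combinatorial bookkeeping underlying the chain: $\rho$ must be chosen small enough that the enlarged chain $B'$ lies in the prescribed dilate of $B$, yet the multiplicity of chains through any fixed small ball must be bounded independently of $x$. These two competing demands are reconciled by the length-space structure of $(Y,\din)$ and the now globally uniform doubling of $\nu$, but the tradeoff forces the resulting Poincar\'e constant $\CPI$ to depend on $R_1/R_0$, $L$, $C_d$, and the original Poincar\'e constants.
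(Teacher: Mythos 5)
Your overall strategy -- reduce to the length space given by quasiconvexity (you via the inner metric $\din$, the paper via connecting centers of a finite cover by $L$-quasiconvex curves) and then run a telescoping chain of balls of radius comparable to $R_0/\la$ -- is essentially the same as the paper's, which outsources the chaining to the proof of Theorem~4.4 in \cite{BBsemilocal}. The main estimate you describe is fine: since every chain consists of a uniformly bounded number of balls of fixed radius $\simeq R_0/\la$ whose measures are all comparable to $\nu(B)$ by Lemma~\ref{lem-cover-B-nr}, the overlap bookkeeping you worry about at the end is not even needed; a crude term-by-term bound already gives $\vint_B|u-u_{B_0}|\,d\nu\le Cr(\vint_{\La B}g^p\,d\nu)^{1/p}$ for some $\La>1$.

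The genuine gap is in the dilation constant. Your chaining argument inevitably produces a $\din$-inequality with some dilation $\La>1$ (the enlarged chain balls $\la_0 B_{x,j}$ near the endpoint $x$ stick out of $\Bin(x_0,r)$), and translating back via $B(x_0,r)\subset\Bin(x_0,Lr)$ and $\La\Bin(x_0,Lr)\subset B(x_0,\La Lr)$ then yields dilation $\La L$, not $L$. The inclusions you invoke would give dilation $L$ only if the inner-metric inequality held with dilation $1$, which the chain construction does not directly provide. This matters: the theorem explicitly asserts dilation $L$, and the example following it in the paper shows the dilation genuinely depends on $L$, so the constant is part of the content. The paper closes this gap by first proving the inequality for radii up to $R_2=\tfrac52 LR_1$ with dilation $2L$ and then invoking the dilation self-improvement result \cite[Theorem~5.1]{BBsemilocal}, which reduces the dilation to $L$ at the cost of shrinking the admissible radii back down to $R_1$ (this is why the slack $\tfrac52 L$ is built in from the start). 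You need this additional self-improvement step -- or an equivalent one in the length space $(Y,\din)$, such as a semilocal version of \cite[Theorem~4.39]{BBbook} -- to reach the stated conclusion.
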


The proof below can be easily adapted to show
that the same is true for so-called $(q,p)$-Poincar\'e inequalities.
The following examples show that the quasiconvexity assumption
cannot be dropped even if one assumes that $\nu$ is globally doubling,
and that one cannot replace $L$ in the conclusion by
the dilation constant in the assumed \p-Poincar\'e inequality, 
nor any fixed multiple of it.

\begin{example} \label{ex-PI-nonquasiconvex}
Let $X=([0,\infty) \times \{0,1\}) \cup (\{0\} \times [0,1])$
  equipped with the Euclidean distance and the 
  Lebesgue measure $\Lone$.
  Then $X$ is a connected nonquasiconvex space
and  $\Lone$ is globally doubling on $X$.
However, 
$\Lone$ supports a \p-Poincar\'e inequality on $X$, $p \ge 1$,
for balls of radii at most $R_0$ if and only if $R_0 \le 1$.
In this case one can choose the dilation constant $\la=1$.
This shows that the quasiconvexity assumption in
Theorem~\ref{thm-PI-arb-R0} cannot be dropped.
\end{example}

\begin{example}
For $a \ge1$, let
 $X=([0,a] \times \{0,1\}) \cup (\{0\} \times [0,1])$, 
  equipped with the Euclidean distance and the 
  Lebesgue measure $\Lone$.
  Then $X$ is a connected $(2a+1)$-quasiconvex space
and  $\Lone$ is globally doubling on $X$.
In this case, 
$\Lone$ supports a \p-Poincar\'e inequality on $X$, $p \ge 1$,
for balls of radii at most $R_0$ for any $R_0>0$,
with the optimal dilation
\[
\begin{cases}
1, & \text{if } R_0 \le 1, \\
\sqrt{1+a^2}, & \text{if } R_0 > 1.
\end{cases}
\]
This shows that the dilation constant
$L$ in the conclusion of Theorem~\ref{thm-PI-arb-R0}
cannot
in general be replaced 
  by the dilation constant in the \p-Poincar\'e inequality assumed
  for balls $\le R_0$,
 nor any fixed multiple of it.
\end{example}

\begin{proof}[Proof of Theorem~\ref{thm-PI-arb-R0}]
The arguments are similar to the proof of Theorem~4.4
in Bj\"orn--Bj\"orn~\cite{BBsemilocal}.
Let $C_d$, $\CPI$ and $\la$ be the constants
in the doubling property and the \p-Poincar\'e inequality
for balls of radii $\le R_0$.
Let $B$ be a ball with radius $r_B\le \tfrac52 LR_1=:R_2$.
We can assume that $r_B>R_0$.

First, note that the conclusions
in the first paragraph of the proof in \cite{BBsemilocal}
with $B_0=B$, $\sigma=L$, $r'=R_0/\la$ and $\mu$ replaced by $\nu$,
follow directly from our assumptions,
without appealing to Lemma~4.7 nor Proposition~4.8 in \cite{BBsemilocal}.
This and the use of Lemma~\ref{lem-mu-doubl-X-doubl}
explains why there is no need to assume properness here.

By Lemma~\ref{lem-cover-B-nr}, $\nu$ is doubling for balls
of radii $\le 7LR_2$, with doubling constant $C_d'$,
  depending only on $C_d$ and $LR_2/R_0$.
Hence, using Lemma~\ref{lem-mu-doubl-X-doubl}, we can cover $B$ by
at most $(C_d')^{7\lceil \log_7 (R_2/r')\rceil}$ 
balls $B'_j$ with radius $r'$.
Their centers can then be connected by $L$-quasiconvex curves.
As in the proof of \cite[Theorem~4.4]{BBsemilocal}, we then construct
along these curves a chain $\{B_j\}_{j=1}^N$ of balls of radius $r'$,
covering $B$ and with a uniform bound on $N$.
It follows that the constant $C''$ in the proof of
\cite[Theorem~4.4]{BBsemilocal}
only depends on $C_d$, $\CPI$, $\la$, $L$ and $R_2/R_0$.
Thus we conclude from the last but one displayed formula 
in the proof of  \cite[Theorem~4.4]{BBsemilocal} (with $B_0=B$)
that $\nu$ supports
a \p-Poincar\'e inequality for balls of radii at most $R_2$,
with dilation $2L$. 

That we can replace $2L$ by $L$ now follows
from \cite[Theorem~5.1]{BBsemilocal}, provided that
we decrease the bound on the radii to $R_1$.
\end{proof}

\section{Poincar\'e inequality on \texorpdfstring{$X_\eps$}{Xe}}\label{sect-PI-for-mubeta}

\emph{In this section, we assume that $X$ is a
  locally compact roughly starlike Gromov $\de$-hyperbolic space
  equipped with a Borel measure 
  $\mu$.
We also fix a point $z_0 \in X$, let $M$ 
be the constant in the roughly starlike condition with respect to
$z_0$,
and assume that 
\[ 
0 < \eps \le \eps_0(\de)
\quad \text{and} \quad
1 \le p < \infty.
\]
Finally, we let $X_\eps$ be the uniformization of $X$ with
  uniformization center $z_0$.
}

\medskip

The following lemma shows that the \p-Poincar\'e inequality
holds for $\mu_\be$ on sufficiently small subWhitney balls in $X_\eps$.
Recall that $\be_0= (17\log C_d)/3R_0$ as in~\eqref{eq:def-beta-naught}.

\begin{lem}  \label{lem-PI-for-subWhitney}
Assume that $\mu$ is doubling, with constant $C_d$, and
supports a \p-Poincar\'e inequality,  
with constants $\CPI$ and $\la$,
both properties holding for balls of radii at most $R_0$.
Let $\be > \be_0$.

Then there exists $c_0>0$, depending only on $\de$, $M$, 
$\eps$,  $R_0$ and $\la$,
such that for all $x\in X_\eps$ and all $0<r\le c_0 d_\eps(x)$, 
the \p-Poincar\'e inequality for $\mu_\be$
holds on $B_\eps=B_\eps(x,r)$,
i.e.\ for all bounded measurable functions $u$
and upper gradients $g_\eps$ of $u$ on $X_\eps$
we have
\[
\vint_{B_\eps} |u-u_{B_\eps,\mu_\be}| \,d\mu_\be 
   \le Cr \biggl( \vint_{\tau B_\eps} g_\eps^p \,d\mu_\be \biggr)^{1/p},
\]
where $\tau=C_2\la/C_1$, with $C_1$ and $C_2$ from 
Theorem~\ref{thm-subWhitney-balls}, 
and $C$ depends only on
$\de$, $M$, $\eps$, $C_d$,  $R_0$,  $\be$, $\la$ and $\CPI$.
\end{lem}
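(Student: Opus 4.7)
Plan: The strategy is to transfer the $p$-Poincar\'e inequality from $(X,\mu)$ to $(X_\eps,\mu_\be)$ by using the conformal weight $\rho_\eps$ to translate upper gradients, and the comparabilities $\rho_\eps\simeq\rho_\eps(x)$ and $\rho_\be\simeq\rho_\be(x)$ on sufficiently small subWhitney balls to translate averages. Fix $B_\eps=B_\eps(x,r)$ with $r\le c_0 d_\eps(x)$ and set $R=r/\rho_\eps(x)$, so that Theorem~\ref{thm-subWhitney-balls} yields
\[
B(x,C_1 R)\subset B_\eps\subset B(x,C_2 R)=:B'.
\]
Using $\rho_\eps(x)\simeq\eps d_\eps(x)$ from Lemma~\ref{lem:dist-to-eps-bdry}, I will pick $c_0$, depending only on $\de,M,\eps,R_0$ and $\la$, small enough that simultaneously $\la C_2 R\le R_0$ (so the assumed \p-PI for $\mu$ applies on $B'$ with dilation $\la B'$), $\tau r\le\tfrac12 d_\eps(x)$ with $\tau=C_2\la/C_1$ (so Theorem~\ref{thm-subWhitney-balls} applies to $\tau B_\eps$), and the $d$-diameter of $\la B'$ stays bounded, so that both $\rho_\eps$ and $\rho_\be$ are comparable to their values at $x$ throughout $\la B'$.

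The key transformation is at the level of upper gradients. Since $ds_\eps=\rho_\eps\,ds$, for any arc-length parametrized curve $\ga$ in $X$ one has, after reparametrizing by $d_\eps$,
\[
|u(\ga(0))-u(\ga(l_\ga))|\le\int_\ga g_\eps\,ds_\eps=\int_\ga g_\eps\rho_\eps\,ds,
\]
so that $g:=g_\eps\rho_\eps$ is a Borel upper gradient of $u$ on $X$. Applying the assumed \p-Poincar\'e inequality on $B'\subset X$ gives
\[
\vint_{B'}|u-u_{B',\mu}|\,d\mu\le C\cdot C_2 R\biggl(\vint_{\la B'}g_\eps^p\rho_\eps^p\,d\mu\biggr)^{1/p}.
\]

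It remains to convert both sides. For the left side, the standard bound $\vint_{B_\eps}|u-u_{B_\eps,\mu_\be}|\,d\mu_\be\le 2\vint_{B_\eps}|u-u_{B',\mu}|\,d\mu_\be$, combined with $B_\eps\subset B'$, the comparability $\rho_\be\simeq\rho_\be(x)$ on $B'$, and $\mu_\be(B_\eps)\simeq\rho_\be(x)\mu(B')$ (via Lemma~\ref{lem-mu-be-x} together with doubling of $\mu$ between $B(x,C_1 R)$, $B_\eps$ and $B'$), yields $\vint_{B_\eps}|u-u_{B_\eps,\mu_\be}|\,d\mu_\be\simle\vint_{B'}|u-u_{B',\mu}|\,d\mu$. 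For the right side, the comparabilities $\rho_\eps\simeq\rho_\eps(x)$ and $d\mu_\be\simeq\rho_\be(x)\,d\mu$ on $\la B'$ turn $\vint_{\la B'}g_\eps^p\rho_\eps^p\,d\mu$ into $\rho_\eps(x)^p\vint_{\la B'}g_\eps^p\,d\mu_\be$; Theorem~\ref{thm-subWhitney-balls} then gives $\la B'\subset\tau B_\eps$, and the doubling of $\mu_\be$ (Theorem~\ref{thm-comp-largest-Whitney}) upgrades this to $\simle\vint_{\tau B_\eps}g_\eps^p\,d\mu_\be$. Putting the two estimates together and using the identity $C_2 R\cdot\rho_\eps(x)=C_2 r$ produces the claimed inequality.

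The bulk of the work is not conceptually hard but is a careful constant-tracking exercise: the single $c_0$ must be chosen so that all of the above conditions hold at once, so that the radii $C_2 R$, $\la C_2 R$ and $\tau r$ all fall in the subWhitney/Poincar\'e regime in which Theorem~\ref{thm-subWhitney-balls}, Lemma~\ref{lem-mu-be-x} and the assumed \p-PI for $\mu$ are applicable, and so that the comparability constants for $\rho_\eps$, $\rho_\be$, $\mu$ and $\mu_\be$ on $\la B'$ depend only on $\de,M,\eps,C_d,R_0,\be,\la$ and $\CPI$. This bookkeeping, routed through Lemmas~\ref{lem:dist-to-eps-bdry} and~\ref{lem-mu-be-x} and Theorem~\ref{thm-subWhitney-balls}, is the main (and essentially only) obstacle in the proof.
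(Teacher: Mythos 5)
Your proposal is correct and follows essentially the same route as the paper's proof: transfer the upper gradient via $g=g_\eps\rho_\eps$ using $ds_\eps=\rho_\eps\,ds$, sandwich $B_\eps$ between metric balls via Theorem~\ref{thm-subWhitney-balls}, choose $c_0$ so that the outer ball has $d$-radius at most $R_0$ and $\tau B_\eps$ remains subWhitney, apply the assumed local Poincar\'e inequality on $X$, and convert both sides using the comparability of $\rho_\eps$, $\rho_\be$ and the measures on the relevant balls. The only cosmetic difference is that you swap the average $u_{B_\eps,\mu_\be}$ for $u_{B',\mu}$ at the outset, whereas the paper carries $u_{B,\mu}$ through and replaces it at the end by the standard argument.
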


\begin{proof}
  Theorem~\ref{thm-subWhitney-balls} shows that if
  $c_0\le C_1/2C_2\la$ 
then
\begin{equation} \label{eq-balls}
B_\eps \subset B\biggl(x,\frac{C_2r}{\rho_\eps(x)}\biggr) =:B
\subset \la B\subset B_\eps\biggl(x,\frac{C_2\la r}{C_1}\biggr)
= \tau B_\eps.
\end{equation}
Moreover, as in~\eqref{eq-comp-rho-be-x-y} we have for all $y\in \tau B_\eps$,
\begin{equation}  \label{eq-comp-rho-be}
\rho_\be(y) = \rho_\eps(y)^{\be/\eps}
\simeq \rho_\eps(x)^{\be/\eps} =  \rho_\be(x).
\end{equation}
Hence, by Theorem~\ref{thm-comp-largest-Whitney}, all the balls
in \eqref{eq-balls}  have comparable $\mu_\be$-measures,
as well as  comparable $\mu$-measures.

Let $u$ be a bounded measurable function on $X_\eps$, or
  equivalently on $X$, and let
$g_\eps$ be an upper gradient of $u$ on $X_\eps$.
Since the arc length parametrization $ds_\eps$ with respect to $d_\eps$
satisfies $ds_\eps = \rho_\eps\,ds$, we conclude that for all
  compact rectifiable curves $\ga$ in~$X_\eps$,
\begin{equation} \label{eq-ds_eps}
    \int_\ga g_\eps \, ds_\eps = \int_\ga g_\eps \rho_\eps \, ds,
\end{equation}
and thus $g:= g_\eps\rho_\eps$ is an upper gradient of $u$ on $X$.
(Note that a compact curve in $X$ is rectifiable with respect to
  $d$ if and only if it is rectifiable with respect to $d_\eps$.)
If $c_0\le R_0\eps/C_2 (2e^{\eps_0M}-1)$, then 
by Lemma~\ref{lem:dist-to-eps-bdry},

\begin{equation*}   
\frac{C_2 r}{\rho_\eps(x)} \le \frac{C_2 c_0 d_\eps(x)}{\rho_\eps(x)}
\le \frac{C_2 c_0 (2e^{\eps_0M}-1)}{\eps} \le R_0,
\end{equation*}
and thus the \p-Poincar\'e inequality holds on $B$.
Using~\eqref{eq-comp-rho-be}  
we then obtain 
\begin{align*} 
\vint_{B_\eps} |u-u_{B,\mu}| \,d\mu_\be
&\simle \vint_{B} |u-u_{B,\mu}| \,d\mu 
   \le \frac{\CPI C_2 r}{\rho_\eps(x)}
           \biggl( \vint_{\la B} g^p \,d\mu \biggr)^{1/p} \\
&\simeq \frac{r}{\rho_\eps(x)}
           \biggl( \vint_{\la B} (g_\eps\rho_\eps)^p \,d\mu_\be \biggr)^{1/p}
\simle r \biggl(\vint_{\tau B_\eps} g_\eps^p \,d\mu_\be \biggr)^{1/p}.
\nonumber
\end{align*}
Finally, a standard argument \cite[Lemma~4.17]{BBbook}
  makes it possible to replace $u_{B,\mu}$ 
  on the left-hand side
by $u_{B_\eps,\mu_\be}$. 
\end{proof}

Bonk, Heinonen and Koskela~\cite[Section~6]{BHK-Unif} proved
that if $\Om$ is a locally compact uniform
space equipped with a measure $\mu$ such that $(\Om,\mu)$ is 
uniformly $Q$-Loewner in subWhitney balls,
then $\Om$ is globally $Q$-Loewner, where $Q>1$. 
If  $\mu$ is locally doubling with $\mu(B(x,r)) \simge r^Q$
whenever  $B(x,r)$ is a subWhitney ball,
then the local $Q$-Loewner condition is equivalent to
an analogous
local $Q$-Poincar\'e inequality, see~\cite[Theorems~5.7 and~5.9]{HeKo98}.

We have shown above that the measure $\mu_\beta$ on the uniformized 
space $X_\eps$ is globally doubling and supports 
a \p-Poincar\'e inequality for subWhitney balls.
Following the philosophy of~\cite[Theorem~6.4]{BHK-Unif},
the next theorem demonstrates that the \p-Poincar\'e inequality 
is actually global on $X_\eps$.

\begin{thm}    \label{thm-PI-Xeps}
Assume that $\mu$ is doubling and supports a \p-Poincar\'e
  inequality on $X$,
both properties holding for balls of radii at most $R_0$.
Let $\be > \be_0$ and $\la>1$.

Then $\mu_\be$ is globally doubling 
and supports a global \p-Poincar\'e inequality on $\clXeps$
with dilation~$1$, and
on $X_\eps$ with dilation~$\la$.
\end{thm}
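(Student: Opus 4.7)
The plan is to combine the ingredients already established: the global doubling of $\mu_\be$ from Theorem~\ref{thm-comp-largest-Whitney}, the local $p$-Poincar\'e inequality on subWhitney balls from Lemma~\ref{lem-PI-for-subWhitney}, and the uniformity of $X_\eps$ from Theorem~\ref{thm-eps0}. The doubling conclusion is immediate, so the whole effort goes into promoting the subWhitney PI to a global one.

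First, I would establish a global PI on $\clXeps$ with \emph{some} (large) dilation $\Lambda$ by a Whitney-chain argument in the spirit of Bonk--Heinonen--Koskela~\cite[Theorem~6.4]{BHK-Unif} and Haj\l asz--Koskela~\cite{HaKo}. Given a ball $B=B_\eps(\xi,r)\subset\clXeps$, apply Lemma~\ref{lem-ex-a0} to choose a Whitney center $z$ with $B_*:=B_\eps(z,a_0 r)\subset B$ and $d_\eps(z)\ge 2a_0 r$. For each $y\in B\cap X_\eps$, pick an $A$-uniform curve $\ga_y$ from $y$ to $z$ (available by Theorem~\ref{thm-eps0}). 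Along $\ga_y$ build a chain of subWhitney balls $B_j=B_\eps(x_j,r_j)$ with $r_j\simeq d_\eps(x_j)\simeq \min\{t_j,l_\eps(\ga_y)-t_j\}/A$, so that the radii decay geometrically as $x_j\to y$, consecutive balls overlap, and every $\tau B_j$ (with $\tau$ from Lemma~\ref{lem-PI-for-subWhitney}) lies in $\Lambda B$ for a fixed $\Lambda=\Lambda(A,\eps,\de)$.

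Second, I would apply Lemma~\ref{lem-PI-for-subWhitney} on each $B_j$ and telescope to get $|u_{B_j,\mu_\be}-u_{B_{j+1},\mu_\be}|\simle r_j(\vint_{\tau B_j}g_\eps^p\,d\mu_\be)^{1/p}$. Summing along the chain, using the geometric decay of the $r_j$, the global doubling of $\mu_\be$, and Lebesgue differentiation (valid since $\mu_\be$ is globally doubling on $\clXeps$), yields the pointwise estimate
\[
|u(y)-u_{B_*,\mu_\be}| \simle r\, \bigl(M_{\Lambda B}(g_\eps^p)(y)\bigr)^{1/p}
\quad\text{for $\mu_\be$-a.e.\ }y\in B,
\]
where $M_{\Lambda B}$ is the restricted Hardy--Littlewood maximal operator on $\Lambda B$. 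Integrating over $B$ and invoking the $L^p$-boundedness of the maximal operator (which follows from the doubling property of $\mu_\be$ and $p\ge 1$; the endpoint $p=1$ uses a standard truncation/Whitney covering argument instead) gives the PI on $B$ with dilation $\Lambda$. A standard computation replaces $u_{B_*,\mu_\be}$ by $u_{B,\mu_\be}$ on the left-hand side.

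Third, I would reduce the dilation to $1$ on $\clXeps$ by invoking the self-improvement of PI in geodesic doubling spaces: since $\clXeps$ is geodesic (Theorem~\ref{thm-eps0}) and $\mu_\be$ is globally doubling, the PI with dilation $\Lambda$ upgrades to PI with dilation $1$, e.g.\ by \cite[Theorem~5.1]{BBsemilocal} already cited in the proof of Theorem~\ref{thm-PI-arb-R0}. Finally, to pass from $\clXeps$ to $X_\eps$ with dilation $\la>1$, I use that $X_\eps$ is dense in $\clXeps$ and $\mu_\be(\clXeps\setm X_\eps)=0$, so that for any ball $B_\eps(x,r)\subset X_\eps$ one has $\mu_\be(B_\eps(x,r))=\mu_\be(\itoverline{B_\eps(x,r)})$ and the enlargement $\la B_\eps(x,r)$ absorbs the (possibly empty) sliver of $\clXeps$-ball near the boundary; upper gradients extend to $\clXeps$ by lower semicontinuity.

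The main obstacle is the chain step: producing Whitney chains along $A$-uniform curves whose radii decay geometrically toward $y$ while all chain balls and their $\tau$-dilations remain inside $\Lambda B$, and then organizing the telescoping sum so that it is controlled by a genuine $L^p$-maximal function against which one can integrate. The subsequent reduction from dilation $\Lambda$ to $1$ is routine once the geodesic structure of $\clXeps$ is in hand.
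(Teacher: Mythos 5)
Your proposal follows essentially the same route as the paper: the doubling is read off from Theorem~\ref{thm-comp-largest-Whitney}, and the global Poincar\'e inequality is obtained by chaining subWhitney balls (on which Lemma~\ref{lem-PI-for-subWhitney} applies) along $A$-uniform curves to a corkscrew/Whitney center, telescoping, and then upgrading the resulting pointwise estimate to an integral one. The paper packages this as Proposition~\ref{prop-PI-to-global}. The one real methodological difference is the weak-to-strong step: you pass through the restricted maximal operator and its $L^q$-boundedness, which (as you note) breaks at $p=1$ and forces a separate truncation argument there, whereas the paper estimates the measure of the level sets $E_t$ directly via the $5$-covering lemma, with an exponent $p/\theta>1$ artificially boosted by the choice of $\al$, so that the Cavalieri-principle step works uniformly for all $p\ge1$. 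Both are standard; the paper's variant is marginally cleaner because it avoids the case split.

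There is one soft spot you should repair: the order in which you treat $X_\eps$ and $\clXeps$. You prove the inequality on $\clXeps$ first and then try to restrict to $X_\eps$, justifying the transfer by ``upper gradients extend to $\clXeps$ by lower semicontinuity.'' That is not a proof. Given a pair $(u,g)$ on $X_\eps$ with $g$ an upper gradient of $u$ only with respect to curves lying in $X_\eps$, applying the Poincar\'e inequality on $\clXeps$ requires an extension $(\tilde u,\tilde g)$ for which the upper gradient inequality holds along \emph{all} curves in $\clXeps$, including those meeting $\bdy_\eps X$; the fact that $\mu_\be(\bdy_\eps X)=0$ does not control the $p$-modulus of such curve families, and this transfer is precisely the nontrivial content of results such as \cite{BBnoncomp}. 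The clean fix --- and what the paper does --- is to reverse the order: run the chain argument entirely inside $X_\eps$ (the uniform curves and all chain balls stay in $X_\eps$), obtaining the global inequality there first with dilation equal to the quasiconvexity constant, hence any $\la>1$ since $X_\eps$ is a length space; then pass to $\clXeps$, with dilation~$1$ since $\clXeps$ is geodesic, by the easy direction of the completion argument (Proposition~7.1 in Aikawa--Shanmugalingam~\cite{AikSh05}, or by repeating the chain argument within $\clXeps$).
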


If $X_\eps$ happens to be geodesic, then it follows from the proof
  below that we can choose the dilation constant $\la=1$ also on $X_\eps$.

\begin{proof}
The global doubling property follows from
Theorem~\ref{thm-comp-largest-Whitney}, both on $X_\eps$ and $\clXeps$.
Since $X_\eps$ is a length space and
Lemma~\ref{lem-PI-for-subWhitney} shows that the
\p-Poincar\'e inequality on $X_\eps$ holds for subWhitney balls,
the global \p-Poincar\'e inequality on $X_\eps$, with dilation $\la>1$,
  follows from the following proposition.
  Moreover, as $\clXeps$ is geodesic,
the global \p-Poincar\'e inequality on $\clXeps$, with dilation $1$,
  also follows from the following proposition.
\end{proof}

\begin{prop}    \label{prop-PI-to-global}
Let $(\Om,d)$ be a bounded $A$-uniform space equipped with a globally
  doubling measure $\nu$, which supports a \p-Poincar\'e inequality
  for all subWhitney balls corresponding to some fixed $0<c_0<1$.
Assume that $\Om$ is $L$-quasiconvex.
Then $\nu$ supports a global \p-Poincar\'e inequality on
$\Om$ with dilation~$L$.

If moreover the completion $\clOm$ of\/ $\Om$ is $L'$-quasiconvex,
then $\nu$, extended by $\nu(\bdy\Om)=0$,
supports a global \p-Poincar\'e inequality on
$\clOm$ with dilation~$L'$.
\end{prop}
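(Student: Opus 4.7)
The plan is to prove the global Poincar\'e inequality on any ball $B = B(x_0, r) \subset \Om$ via the standard chaining construction for uniform spaces. For each $x \in B$, use the $A$-uniformity of $\Om$ to connect $x$ to a central point $z_B \in B$ (chosen so that $d_\Om(z_B)$ is comparable to $\min\{r, \diam\Om/A\}$, which is possible by the $A$-uniformity) via an $A$-uniform curve $\ga_x$. Along $\ga_x$, construct a chain of subWhitney balls $B_0^x, B_1^x, \dots, B_{N_x}^x$ with $z_B \in B_0^x$, $x \in B_{N_x}^x$, each $B_i^x$ of radius $\tfrac12 c_0 d_\Om(\ga_x(t_i))$, consecutive balls overlapping in a fixed fraction, and the whole collection having uniformly bounded overlap. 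The cigar condition $d_\Om(\ga(t)) \ge A^{-1}\min\{t, l_\ga - t\}$ forces the radii to shrink geometrically near $x$, so $N_x \lesssim \log(d_\Om(z_B)/d_\Om(x))$ is controllable.

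Next, apply the local \p-Poincar\'e inequality on each $B_i^x$ to compare averages through the overlap, obtaining $|u_{B_i^x} - u_{B_{i+1}^x}| \lesssim r_i (\vint_{\lambda_0 B_i^x} g^p\, d\nu)^{1/p}$. Telescoping yields, at Lebesgue points $x$,
\[
|u(x) - u_{B_0^x}| \lesssim \sum_{i=0}^{N_x} r_i \biggl(\vint_{\lambda_0 B_i^x} g^p \, d\nu\biggr)^{1/p}.
\]
Integrating over $x \in B$, applying H\"older in the sum, and then Fubini together with the bounded-overlap property converts the right-hand side into a single integral of $g^p$ over an enlarged ball. Since the union of all the $\ga_x$ lies inside the $L$-quasiconvex enlargement of $B$, one obtains an inequality of the form $\vint_B |u - u_{B_0^x}| \, d\nu \lesssim r (\vint_{cLB} g^p\, d\nu)^{1/p}$ for some constant $c$. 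Replacing $u_{B_0^x}$ by $u_B$ is standard (cf.\ \cite[Lemma~4.17]{BBbook}), and the dilation constant can be reduced to exactly $L$ by invoking \cite[Theorem~5.1]{BBsemilocal}, as was done in the proof of Theorem~\ref{thm-PI-arb-R0} above.

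For the completion statement, since $\nu(\bdy\Om) = 0$, integrals over balls in $\clOm$ agree with integrals over their intersections with $\Om$; moreover, upper gradients on $\clOm$ restrict to upper gradients on $\Om$, and conversely any integrable function on a ball in $\clOm$ can be regarded as a function on its intersection with $\Om$. The $L'$-quasiconvexity of $\clOm$ guarantees that even when $x_0, x, y$ lie on or near $\bdy\Om$, the chains fit inside an $L'$-enlargement of the original ball, so the identical argument produces the global \p-Poincar\'e inequality on $\clOm$ with dilation $L'$.

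The main obstacle will be the precise bookkeeping of the dilation constant to arrive at exactly $L$ (respectively $L'$) rather than some larger product such as $\max\{L, \lambda_0\}$ or $cL$; this is precisely the role of the reduction theorem \cite[Theorem~5.1]{BBsemilocal}. A secondary subtlety is the case when $B$ is comparable in size to $\Om$ itself: then the central point $z_B$ must be chosen using the $A$-uniformity of $\Om$ to guarantee $d_\Om(z_B) \gtrsim \diam\Om$, and the chain construction proceeds from $z_B$ outward to both $x$ and $y$ rather than through a single center.
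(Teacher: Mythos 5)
Your chaining setup coincides with the paper's: both proofs connect each point of $B$ to a central ball (obtained from the corkscrew property of uniform spaces) by a chain of subWhitney balls along an $A$-uniform curve, telescope the averages, and apply the assumed subWhitney Poincar\'e inequality on each link. Where you diverge is in passing from the pointwise estimate $|u(x)-u_{B_{0,0}}|\lesssim \sum_i r_i(\vint_{\la B_i^x}g^p\,d\nu)^{1/p}$ to the integral inequality. You propose H\"older in the sum followed by Fubini and bounded overlap; the paper instead runs a level-set argument: for each $x$ in the superlevel set $\{|u-u_{B_{0,0}}|\ge t\}$ it selects a single dominant ball $B_x$ in the chain, extracts a disjoint subfamily by the $5$-covering lemma, obtains a weak-type estimate for $\nu(E_t)$, and recovers the strong $(1,p)$-inequality via the Cavalieri principle. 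Both routes work, but note that your Fubini step needs more care than the proposal suggests: the $1/p$-th power does not commute with the $x$-integration, so after H\"older (using $\sum_i r_i\lesssim r$) you must also apply Jensen to pull the power outside, and then the relevant overlap is not that of the balls within one chain $\B_x$ but across different $x$ --- the balls $B_i^x$ form an uncountable family, so one must either discretize the chains into a fixed countable (Whitney-type) family or use the fact that $x\in CB_i^x$ together with doubling to bound, for each level $i$, the measure of the set of points whose chain passes through a given ball. The paper's covering-lemma route sidesteps this discretization entirely, which is presumably why it was chosen. For the final dilation reduction the paper invokes \cite[Theorem~4.39]{BBbook} (the global statement) rather than \cite[Theorem~5.1]{BBsemilocal}, and for the completion it simply cites Proposition~7.1 of Aikawa--Shanmugalingam \cite{AikSh05} (or reruns the argument in $\clOm$), which matches your sketch.
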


Recall that $\Om$ is always $A$-quasiconvex by the
  $A$-uniformity condition, but that $L$ may be smaller than $A$.
Also, $\clOm$ is always $L$-quasiconvex, but 
it is possible to have $L'<L$.

\begin{proof}
Let $x_0\in \Om$, $0<r\le2\diam \Om$ and $B_0=B(x_0,r)$ be fixed.
The balls in this proof are with respect to $\Om$. 
It is well known, and easily shown using the arguments in the
proof of Lemma~\ref{lem-ex-a0}, that uniform spaces satisfy the
corkscrew condition, i.e.\ there exists $a_0$ (independent of $x_0$
and $r$) and $z$ such that
$d_\Om(z)\ge 2a_0r$ and $B(z,a_0r)\subset B_0$, cf.\ 
Bj\"orn--Shanmugalingam~\cite[Lemma~4.2]{BS-JMAA}.
With $c_0$ as in the assumptions of the proposition, let
\[
r_0= \frac{a_0 c_0 r}{8A} \le \frac{c_0 d_\Om(z)}{16A}
\quad \text{and} \quad r_i = 2^{-i}r_0, \quad i=1,2,\ldots.
\]
Since $\Om$ is $A$-uniform,
\cite[Lemma~4.3]{BS-JMAA} with $\rho_0=r_0$
and $\sig=1/c_0$ provides us for every $x\in B_0$ with a chain 
\[
\B_x= \{\Bij=B_\eps(\xij,r_i): i=0,1,\ldots \text{ and }
j=0,1,\ldots,m_i\}
\]
of balls connecting the ball $B_{0,0} :=  B(z,r_0)$ to $x$ as follows:
\begin{enumerate}
\item \label{first} 
For all $i$ and $j$ we have $m_i \le Ar/r_0 = 8A^2/a_0 c_0$,
\[
4r_i \le c_0 d_\Om(\xij)
\quad \text{and} \quad
d(\xij,x) \le 2^{-i}A d(x,z) < 2^{-i}Ar.
\]

\item \label{second}
For large $i$, we have $m_i=0$ and the balls $\Bio$ are centered at $x$.

\item \label{last}
The balls are ordered lexicographically, i.e.\ $\Bij$ comes before
$B_{i',j'}$ if and only if $i<i'$, or $i=i'$ and $j<j'$.
If $B^*$ denotes the immediate successor of $B\in \B_x$
then $B \cap B^*$ is nonempty.
\end{enumerate}
Let $u$ be a bounded measurable function on $\Om$
and $g$ be an upper gradient of $u$ in $\Om$. 
If $x\in B_0$ is a Lebesgue point of $u$ then
\begin{equation}
|u(x)-u_{B_{0,0}}| = \lim_{i\to\infty} |u_\Bio - u_{B_{0,0}}|
        \le \sum_{B\in\B_x} |u_{B^*} - u_B|,
                  \label{telescope}
\end{equation}
where $u_{B} = u_{B,\nu}$ and similarly for other balls.
Moreover, $B^*\subset3B$ and 
\[
|u_{B^*} - u_{B}| 
        \le |u_{B^*} - u_{3B}| + |u_{3B} - u_{B}|.
\]
As $3r_i \le c_0 d_\Om(\xij)$ and the radii of $B$ and $B^*$ 
differ by at most a factor 2, an application of the
  \p-Poincar\'e inequality on $3B$
shows that
\[
|u_{B^*} - u_{3B}| 
       \simle  \vint_{3B} |u-u_{3B}| \,d\nu
       \le C  r(B) \biggl( \vint_{3\la B} g^p \,d\nu \biggr)^{1/p},
\]
where $r(B)$ is the radius of $B$  and
$\la$ is the dilation constant in the assumed \p-Poincar\'e
  inequality for subWhitney balls.
The difference $|u_{3B} - u_{B}|$ is estimated in the same way.
Hence, inserting these estimates into (\ref{telescope}),
\[ 
|u(x)-u_{B_{0,0}}| \simle \sum_{B\in\B_x} r(B)
                       \biggl( \vint_{3\la B} g^p \,d\nu \biggr)^{1/p}.
\] 
We now wish to estimate the measure of level sets of the 
function $x\mapsto|u(x)-u_{B_{0,0}}|$ in $B_0$.
Assume that $|u(x)-u_{B_{0,0}}| \ge t$ and write 
$t= C_\al N t \sum_{i=0}^{\infty} 2^{-i\al}$,
where $\al\in(0,1)$ will be  chosen later,
and $N\le 1+Ar/R_0 = 1+8A^2/a_0 c_0 $ is the maximal number of balls
in $\B_x$ with the same radius.
Then
\[
C_\al N t \sum_{i=0}^{\infty} 2^{-i\al} = t
     \simle \sum_{B\in\B_x} r(B)
             \biggl( \vint_{3\la B} g^p \,d\nu \biggr)^{1/p}.
\]
Hence, there exists $B_x = B(\xij,r_i) \in \B_x$ such that 
\begin{equation*}
C_\al  2^{-i\al} t
     \simle r_i \biggl( \vint_{3\la B_x} g^p \,d\nu \biggr)^{1/p}.
\end{equation*}
We have $2^{-i} = r_i/r_0 = 8 Ar_i/a_0 c_0 r$,
and inserting this into the last inequality yields
\[
t \simle r \Bigl( \frac{r_i}{r} \Bigr)^{1-\al} 
          \biggl( \vint_{3\la B_x} g^p \,d\nu \biggr)^{1/p}.
\]
As $\nu$  is globally doubling, there exists 
$s>0$ independent of $B_x$ such that
\[
\frac{r_i}{r} \simle 
    \biggl( \frac{\nu(3\la B_x)}{\nu(B_0)} \biggr)^{1/s},
\]
see e.g.\ \cite[Lemma~3.3]{BBbook} or \cite[(3.4.9)]{HKST}.    
Hence 
\[
t \simle r \biggl( \frac{\nu(3\la B_x)}
                  {\nu(B_0)} \biggr)^{(1-\al)/s}
       \biggl( \vint_{3\la B_x} g^p \,d\nu \biggr)^{1/p},
\]
and choosing $\al\in(0,1)$ so that $\theta := 1- (1-\al)p/s \in(0,1)$, 
we obtain
\begin{equation}   \label{est-mu(Bx)}
\nu(3\la B_x)^{\theta} 
     \simle \frac{r^p }{t^p \nu(B_0)^{1-\theta}} 
            \int_{3\la B_x} g^p \,d\nu. 
\end{equation}
Let $E_t = \{x \in B_0: |u(x)-u_{B_{0,0}}| \ge t\}$ and $F_t$ be
the set of all points in $E_t$ which are Lebesgue points of $u$.
The global doubling property of $\nu$ guarantees that 
a.e.\ $x$ is a Lebesgue point of $u$, 
  see Heinonen~\cite[Theorem~1.8]{heinonen}.
  By the above,  for every $x\in F_t$ there exists $B_x \in\B_x$ satisfying
(\ref{est-mu(Bx)}). 
Note also that by construction of the chain,  
we have $x\in B'_x:=8(a_0 c_0)^{-1} A^2 B_x$.
The balls $\{B'_x\}_{x\in F_t}$, therefore cover $F_t$.
The 5-covering lemma (Theorem~1.2 in Heinonen~\cite{heinonen})
provides us with a pairwise disjoint collection
$\{\la\Bxip\}_{i=1}^{\infty}$ such that the union of all balls 
$5\la\Bxip$ covers $F_t$.
Then   the balls $3\la\Bxi\subset\la\Bxip$ 
are also pairwise disjoint and the 
global doubling property of $\nu$, together with 
\eqref{est-mu(Bx)}, yields
\begin{align*}
\nu(E_t) = \nu(F_t) &\le \sum_{i=1}^{\infty} \nu(5\la\Bxip)
      \simle \sum_{i=1}^{\infty} \nu(3\la\Bxi) \\
      &\simle \frac{r^{p/\theta}}
                   {t^{p/\theta} \nu(B_0)^{1/\theta-1}} 
           \sum_{i=1}^{\infty} \biggl( \int_{3\la \Bxi} g^p \,d\nu \biggr)
                    ^{1/\theta} \\
      &\le \frac{r^{p/\theta}}
                       {t^{p/\theta} \nu(B_0)^{1/\theta-1}} 
            \biggl( \int_{\La B_0} g^p \,d\nu \biggr)^{1/\theta},
\end{align*}
where $\La$ depends only on $A$, $\la$, $a_0$ and $c_0$.
Lemma~4.22 in Heinonen~\cite{heinonen}, which can be proved using the
Cavalieri principle, now implies that
\[
\vint_{B_0} |u-u_{B_{0,0}}| \,d\nu
       \simle r \biggl( \vint_{\La B_0} g^p \,d\nu \biggr)^{1/p}
\]
and a standard argument
\cite[Lemma~4.17]{BBbook}
allows us to replace $u_{B_{0,0}}$ by 
$u_{B_0}$. 

Since $\Om$ is $L$-quasiconvex,
it follows from \cite[Theorem~4.39]{BBbook}
that the dilation $\La$ in the obtained 
global \p-Poincar\'e inequality can be replaced by L.

Finally, by
  Proposition~7.1 in Aikawa--Shanmugalingam~\cite{AikSh05}
  (or the proof above applied within $\clOm$ and with $x_0 \in \clOm$),
  $\nu$ supports a global \p-Poincar\'e inequality,
  where, again using \cite[Theorem~4.39]{BBbook}, the dilation
  constant can be chosen to be $L'$.
\end{proof}

\section{Hyperbolization}
\label{sect-hyperbolization}

\emph{We assume in this section that $(\Om,d)$
    is a noncomplete $L$-quasiconvex space which
    is open in its completion $\clOm$,
    and let $\bdy \Om$ be its boundary within $\clOm$.}

\medskip

We define the \emph{quasihyperbolic metric} on $\Om$ by
\[
k(x,y) = \inf_\ga \int_\ga \frac{ds}{d_\Om(\ga(s))},
\quad \text{where } d_\Om(x)=\dist(x,\bdy \Om),
\]
$ds$ is the arc length parametrization of $\ga$, and
the infimum is taken
over all rectifiable curves in $\Om$ connecting $x$ to $y$.
It follows that $(\Om,k)$ is a length space.
Balls with respect to the quasihyperbolic metric $k$ will be denoted by $B_k$.

Even though our main interest is in hyperbolizing uniform spaces,
the quasihyperbolic metric makes sense in greater generality.
 In fact, the results in this section
hold also if we let $\Om \subsetneq Y$ be an $L$-quasiconvex open subset
of a (not necessarily complete) metric space $Y$
and the  quasihyperbolic metric $k$ is defined using
$d_\Om(x)=\dist(x,Y \setm \Om)$.

If $\Om$ is a locally compact uniform space,
then 
Theorem~3.6 in Bonk--Heinonen--Koskela~\cite{BHK-Unif}
 shows that the space $(\Om,k)$
is a proper geodesic Gromov hyperbolic space.
Moreover, if $\Om$ is bounded, then $(\Om,k)$ is roughly starlike.

As described in the introduction, 
the
operations of uniformization and hyperbolization are mutually
opposite, by Bonk--Heinonen--Koskela
\cite[the discussion before Proposition~4.5]{BHK-Unif}.

\begin{lem} \label{lem-k-est}
Let $x,y \in \Om$.
Then the following are true\/\textup{:}
\begin{alignat}{2}
  k(x,y) & \ge \frac{d(x,y)}{2d_\Om(x)}, &\quad& \text{if } d(x,y) \le d_\Om(x),
  \nonumber \\
  k(x,y) & \ge \tfrac{1}{2}, && \text{if } d(x,y) \ge d_\Om(x), \nonumber \\
 \frac{d(x,y)}{2d_\Om(x)} \le k(x,y) &\le \frac{2Ld(x,y)}{d_\Om(x)},
 && \text{if } d(x,y) \le \frac{d_\Om(x)}{2L}.
 \label{eq-k(x,y)}
\end{alignat}
Moreover,
\begin{alignat*}{2}
  B\biggl(x,\frac{r d_\Om(x)}{2L}\biggr)
  & \subset B_k(x,r)
  \subset B(x,2r d_\Om(x)),
  &\quad&   \text{if } r \le \frac{1}{2}, \\
  B_k\biggl(x,\frac{r}{2 d_\Om(x)}\biggr)
  & \subset B(x,r)
  \subset B_k\biggl(x,\frac{2Lr}{d_\Om(x)}\biggr),
  &\quad&   \text{if } r \le \frac{d_\Om(x)}{2L}.
\end{alignat*}
\end{lem}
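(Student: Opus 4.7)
The plan is to obtain all three displayed inequalities from a single basic observation, namely that along any rectifiable curve $\ga:[0,l_\ga]\to\Om$ starting at $x$ and parametrized by arc length, the triangle inequality yields
\[
d_\Om(\ga(s)) \le d_\Om(x) + d(x,\ga(s)) \le d_\Om(x) + s.
\]
Integrating the reciprocal then produces a logarithmic lower bound
\[
\int_\ga \frac{ds}{d_\Om(\ga(s))} \ge \int_0^{l_\ga}\frac{ds}{d_\Om(x)+s} = \log\biggl(1+\frac{l_\ga}{d_\Om(x)}\biggr),
\]
and since $l_\ga \ge d(x,y)$, taking the infimum over $\ga$ gives
$k(x,y) \ge \log(1 + d(x,y)/d_\Om(x))$.

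From this single bound the first two estimates drop out by elementary calculus. If $d(x,y)\le d_\Om(x)$, then $t:=d(x,y)/d_\Om(x)\in[0,1]$, and the inequality $\log(1+t)\ge t/2$ (which holds on $[0,1]$ since $e^{t/2}\le 1+t$ there) gives $k(x,y)\ge d(x,y)/2d_\Om(x)$. If instead $d(x,y)\ge d_\Om(x)$, then $k(x,y)\ge \log 2 \ge \tfrac12$. For the upper bound in \eqref{eq-k(x,y)}, the plan is to invoke $L$-quasiconvexity of $\Om$ to obtain a curve $\ga$ from $x$ to $y$ with $l_\ga\le L d(x,y)$; under the hypothesis $d(x,y)\le d_\Om(x)/2L$ we have $l_\ga\le d_\Om(x)/2$, so along $\ga$ the reverse triangle inequality gives $d_\Om(\ga(s))\ge d_\Om(x)-s\ge d_\Om(x)/2$, and integrating yields $k(x,y)\le 2l_\ga/d_\Om(x)\le 2Ld(x,y)/d_\Om(x)$.

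Once these three estimates are in place, the ball inclusions are straightforward bookkeeping. For $r\le \tfrac12$, the inclusion $B(x,rd_\Om(x)/2L)\subset B_k(x,r)$ follows from the upper bound in \eqref{eq-k(x,y)} applied to any $y$ with $d(x,y)\le rd_\Om(x)/2L\le d_\Om(x)/2L$. Conversely, if $y\in B_k(x,r)$ with $r\le \tfrac12$, then the second estimate rules out $d(x,y)\ge d_\Om(x)$ (as that would force $k(x,y)\ge \tfrac12\ge r$), so the first estimate applies and gives $d(x,y)\le 2d_\Om(x)k(x,y)<2rd_\Om(x)$. The two inclusions in the final display follow by the same dichotomy: under $r\le d_\Om(x)/2L$, the radius $r/(2d_\Om(x))\le 1/4L\le \tfrac12$, so the previous pair of inclusions applies and yields $B_k(x,r/2d_\Om(x))\subset B(x,r)$, while $B(x,r)\subset B_k(x,2Lr/d_\Om(x))$ is immediate from the upper bound in \eqref{eq-k(x,y)}.

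There is no real obstacle here; the only mildly delicate points are the calculus estimate $\log(1+t)\ge t/2$ on $[0,1]$ and the numerical check $\log 2\ge \tfrac12$, both of which are elementary. The rest is organized application of the triangle inequality and quasiconvexity.
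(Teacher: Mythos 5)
Your proof is correct and follows essentially the same route as the paper: both arguments integrate $ds/(d_\Om(x)+s)$ along an arbitrary curve for the lower bounds, use $L$-quasiconvexity with $d_\Om(\ga(s))\ge d_\Om(x)-s\ge\tfrac12 d_\Om(x)$ for the upper bound, and then read off the ball inclusions. The only cosmetic difference is that the paper truncates the integral to $[0,d(x,y)]$ (resp.\ $[0,d_\Om(x)]$) and bounds the denominator by $2d_\Om(x)$ directly, whereas you evaluate the logarithm and invoke $\log(1+t)\ge t/2$ on $[0,1]$ and $\log 2>\tfrac12$; both are valid.
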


If $\Om$ is $A$-uniform it is possible to get
an upper bound similar to the one in \eqref{eq-k(x,y)} also
when $d(x,y) \le \frac12 d_\Om(x)$, albeit with a little
more complicated expression for the constant.
As we will not need such an estimate, we leave it to the interested reader
to deduce such a bound.

\begin{proof}
Without loss of generality we assume that $x \ne y$.

Assume first that $d(x,y) \le d_\Om(x)$.
Let $\ga:[0,l_\ga] \to \Om$ be a curve from $x$ to $y$.
All curves in this proof will be arc length parametrized rectifiable curves in $\Om$.
Then $l_\ga \ge d(x,y)$ and
\[ 
\int_\ga \frac{ds}{d_\Om(\ga(s))}
\ge \int_0^{d(x,y)} \frac{dt}{d_\Om(x)+t} 
> \int_0^{d(x,y)} \frac{dt}{2d_\Om(x)} 
=  \frac{d(x,y)}{2d_\Om(x)}.
\] 
Taking infimum over all such $\ga$ shows that
$k(x,y) \ge d(x,y)/2d_\Om(x)$.

Suppose next that $d(x,y) \ge d_\Om(x)$.
Let $\ga:[0,l_\ga] \to \Om$ be a curve from $x$ to $y$.
Then $l_\ga \ge d(x,y) \ge d_\Om(x)$ and
\[ 
\int_\ga \frac{ds}{d_\Om(\ga(s))}
\ge \int_0^{d_\Om(x)} \frac{dt}{d_\Om(x)+t} 
> \int_0^{d_\Om(x)} \frac{dt}{2d_\Om(x)} 
=  \frac{1}{2}.
\] 
Taking infimum over all such $\ga$ shows that
$k(x,y) \ge \tfrac12$.

Assume finally that $d(x,y) \le d_\Om(x)/2L$.
As $\Om$ is $L$-quasiconvex,
there is a curve $\ga:[0,l_\ga] \to \Om$ from $x$ to $y$ with  length
$l_\ga \le  L d(x,y) \le \frac12 d_\Om(x)$.
Then
\[
k(x,y) \le \int_{\ga} \frac{ds}{d_\Om(\ga(s))}
   \le l_\ga \frac{2}{d_\Om(x)} 
   \le \frac{2Ld(x,y)}{d_\Om(x)}.
   \]

The ball inclusions now follow directly from this.
\end{proof}

We shall now equip  $(\Om,k)$ with a measure
determined by the original measure $\mu$ on $\Om$.
As before, for the results in this section it will be enough
to assume that $\Om$ is quasiconvex.

\begin{deff}  \label{def-hyp-meas-mu(B)}
Let $\Om$ be equipped with a Borel measure $\mu$.
For measurable $A\subset \Om$ and $\al>0$, let
\[
\mu^\al (A) = \int_A \frac{d\mu(x)}{d_\Om(x)^\al}.
\]
\end{deff}

\begin{prop}\label{prop:doubling-mu-alph}
  Assume that $\mu$ is globally
  doubling in $(\Om,d)$ with doubling constant $C_\mu$.
  Then $\mu^\alp$ is doubling for $B_k$-balls of radii at most $R_0=\frac18$,
  with doubling constant $C_d=4^\alp C_\mu^m$,
where $m=\lceil \log_2 8L \rceil$.

Moreover, if $R_1 >0$, then
$\mu^\alp$ is doubling for $B_k$-balls of radii at most $R_1$.
\end{prop}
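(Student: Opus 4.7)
My plan is to prove the $B_k$-doubling for small radii by sandwiching the $B_k$-balls between Euclidean-type $d$-balls using Lemma~\ref{lem-k-est}, then to estimate the weight $d_\Om^{-\alpha}$ uniformly on the ball, and finally to extend to all radii using the fact that $(\Om,k)$ is a length space.

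Fix $x\in\Om$ and $0<r\le R_0=\tfrac18$. Since $2r\le\tfrac14\le\tfrac12$, Lemma~\ref{lem-k-est} yields
\[
B\biggl(x,\frac{r\,d_\Om(x)}{2L}\biggr)\subset B_k(x,r)
\quad\text{and}\quad
B_k(x,2r)\subset B(x,4r\,d_\Om(x)).
\]
In particular every $y\in B_k(x,2r)$ satisfies $d(x,y)<4r\,d_\Om(x)\le\tfrac12 d_\Om(x)$, and so by the triangle inequality applied to $d_\Om$,
\[
\tfrac12 d_\Om(x)\le d_\Om(y)\le \tfrac32 d_\Om(x)\le 2d_\Om(x).
\]
Consequently the density $d_\Om^{-\alpha}$ varies by a factor at most $2^\alpha$ on $B_k(x,2r)$ in each direction.

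Combining these observations,
\[
\mu^\alpha(B_k(x,2r))\le 2^\alpha\, d_\Om(x)^{-\alpha}\,\mu(B(x,4r\,d_\Om(x)))
\]
and
\[
\mu^\alpha(B_k(x,r))\ge 2^{-\alpha}\, d_\Om(x)^{-\alpha}\,\mu\biggl(B\biggl(x,\frac{r\,d_\Om(x)}{2L}\biggr)\biggr).
\]
The ratio of the two $d$-radii appearing here is $8L$, so $m=\lceil\log_2 8L\rceil$ successive applications of the global doubling of $\mu$ give
\[
\mu(B(x,4r\,d_\Om(x)))\le C_\mu^m\,\mu\biggl(B\biggl(x,\frac{r\,d_\Om(x)}{2L}\biggr)\biggr).
\]
Dividing the two displayed inequalities then yields $\mu^\alpha(B_k(x,2r))\le 4^\alpha C_\mu^m\,\mu^\alpha(B_k(x,r))$, which is exactly the claimed doubling constant $C_d=4^\alpha C_\mu^m$. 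Finiteness and positivity of $\mu^\alpha(B_k(x,r))$ for $x\in\Om$ follows because both $d_\Om^{-\alpha}$ is bounded above and below on each such ball and $\mu$ itself is locally finite and positive on balls.

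For the moreover part, note that $(\Om,k)$ is automatically a length space by definition of the quasihyperbolic metric. Hence Lemma~\ref{lem-cover-B-nr} (or Proposition~\ref{prop-doubling-R0-quasiconvex}) applies with $Y=(\Om,k)$, $R_0=\tfrac18$ and the constant $C_d$ computed above, and promotes the uniformly local doubling at scale $\tfrac18$ to doubling for $B_k$-balls of radii at most any prescribed $R_1>0$, with a new doubling constant depending only on $R_1$, $L$, $\alpha$ and $C_\mu$. The main (and really only) obstacle is the bookkeeping of the weight bounds and the choice $R_0=\tfrac18$ that makes $4r\le\tfrac12 d_\Om(x)$, so that $d_\Om$ is comparable to $d_\Om(x)$ on the doubled ball; once this is set up, the extension to arbitrary $R_1$ is essentially automatic from the length-space structure.
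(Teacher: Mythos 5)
Your proof is correct and follows essentially the same route as the paper's: the same two ball inclusions from Lemma~\ref{lem-k-est}, the same pointwise comparison $d_\Om(y)\simeq d_\Om(x)$ on the enclosing ball giving the factor $4^\alp$, the same $m=\lceil\log_2 8L\rceil$ iterations of the doubling of $\mu$ across the radius ratio $8L$, and the same appeal to the length-space structure of $(\Om,k)$ via Lemma~\ref{lem-cover-B-nr} for the extension to arbitrary $R_1$.
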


\begin{proof}
  Let $x \in \Om$,
  $r \le \frac18$, $B_k=B_k(x,r)$ and $B=B(x,r d_\Om(x))$.
By Lemma~\ref{lem-k-est},
\[
  \mu^\alp(B_k)
  \ge \mu^\alp \biggl(\frac{1}{2L} B\biggr)
  \ge \biggl( \frac{1}{2d_\Om(x)}\biggr)^\alp \mu\biggl(\frac{1}{2L} B\biggr)
\]
and hence, again using Lemma~\ref{lem-k-est},
\begin{align*}
  \mu^\alp(2B_k)
   \le \mu^\alp(4B)
&  \le \biggl( \frac{2}{d_\Om(x)}\biggr)^\alp \mu(4B)
  \le \biggl( \frac{2}{d_\Om(x)}\biggr)^\alp C_\mu^m \mu\biggl(\frac{1}{2L} B\biggr) \\
  &\le \biggl( \frac{2}{d_\Om(x)}\biggr)^\alp (2d_\Om(x))^\alp C_\mu^m   \mu^\alp(B_k)
  = C_d   \mu^\alp(B_k).
\end{align*}

As $(\Om,k)$ is a length space, 
Lemma~\ref{lem-cover-B-nr} shows that
$\mu^\alp$ is doubling for $B_k$-balls of radii at most $R_1$
for any $R_1>0$.
\end{proof}

\begin{prop}\label{prop:PI-mu-alpha}
  Assume that $(\Om,d)$ is equipped with a globally doubling measure $\mu$ supporting a 
  global \p-Poincar\'e inequality with dilation $\la$ and $p \ge 1$.
  Let $\alp>0$ and $R_1>0$.
  Then $(\Om,k)$, equipped with the measure $\mu^\alp$, supports a
  \p-Poincar\'e inequality for balls of radii at most $R_1$ 
  with dilation $L$ and the other Poincar\'e constant depending only on $L$, $R_1$
 and the global doubling and Poincar\'e constants.
\end{prop}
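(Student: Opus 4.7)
The plan is to first establish the $p$-Poincar\'e inequality for $\mu^\alp$ on $k$-balls of small radius by reducing to the given $p$-Poincar\'e inequality for $\mu$ on comparable $d$-balls, and then to extend to $k$-balls of radii up to $R_1$ using Theorem~\ref{thm-PI-arb-R0}.

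First I would fix $r_0 \in (0, 1/8\la L)$, with $\la$ the dilation in the assumed $p$-Poincar\'e inequality for $\mu$, small enough that by Lemma~\ref{lem-k-est}, for every $x \in \Om$ and every $0 < r \le r_0$,
\[
B\Bigl(x, \tfrac{r d_\Om(x)}{2L}\Bigr) \subset B_k(x,r) \subset B(x, 2r d_\Om(x)) =: B,
\]
and so that $d_\Om(y) \simeq d_\Om(x)$ uniformly for $y \in \la B$. This last comparability is the key observation: for $k(x,y) \le r \le 1/2$ Lemma~\ref{lem-k-est} forces $d(x,y) \le 2r d_\Om(x)$, so $|d_\Om(y) - d_\Om(x)| \le 2r d_\Om(x)$, and a similar bound holds on all of $\la B$ by the triangle inequality once $r_0$ is small enough.

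Next, let $u$ be bounded measurable on $\Om$ with $k$-upper gradient $g$. Since arc length transforms as $ds_k = ds/d_\Om$, the function $\tilde g := g/d_\Om$ is a $d$-upper gradient of $u$. Applying the global $p$-Poincar\'e inequality for $\mu$ on $B$ gives
\[
\vint_B |u - u_{B,\mu}|\,d\mu \le \CPI \cdot 2r d_\Om(x) \biggl(\vint_{\la B} \tilde g^p\,d\mu\biggr)^{1/p}.
\]
On $\la B$ the comparability $d_\Om \simeq d_\Om(x)$ yields $d\mu^\alp \simeq d_\Om(x)^{-\alp}\,d\mu$, so averages with respect to $\mu^\alp$ and $\mu$ on $\la B$ (and on $B_k(x,r) \subset B$) are comparable. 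Substituting $\tilde g = g/d_\Om$ gives
\[
\biggl(\vint_{\la B} \tilde g^p\,d\mu\biggr)^{1/p} \simeq d_\Om(x)^{-1}\biggl(\vint_{\la B} g^p\,d\mu^\alp\biggr)^{1/p},
\]
and the factor $d_\Om(x)$ in the radius on the right-hand side cancels the $d_\Om(x)^{-1}$. A standard average-swap (\cite[Lemma~4.17]{BBbook}) then replaces $u_{B,\mu}$ by $u_{B_k(x,r),\mu^\alp}$, while Lemma~\ref{lem-k-est} yields an inclusion $\la B \subset B_k(x, \tau r)$ for some $\tau = \tau(L,\la)$. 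Altogether this is a $p$-Poincar\'e inequality for $\mu^\alp$ on $B_k(x,r)$ with $0 < r \le r_0$ and dilation~$\tau$.

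Finally, $\mu^\alp$ is doubling for $k$-balls of radii at most $R_1$ by Proposition~\ref{prop:doubling-mu-alph}, and $(\Om, k)$ is a length space. Theorem~\ref{thm-PI-arb-R0} then extends the Poincar\'e inequality from radius $\le r_0$ to radius $\le R_1$, and since the length property makes $(\Om, k)$ arbitrarily close to $1$-quasiconvex and $L \ge 1$, the dilation can be taken to be $L$. The main obstacle is the careful bookkeeping in the scale change between the $d$- and $k$-metrics---keeping $r_0$ small enough that $d_\Om(\cdot)/d_\Om(x)$ stays uniformly bounded on all auxiliary balls $B$, $\la B$ and $B_k(x,\tau r)$ simultaneously, so that the factors of $d_\Om(x)$ cancel cleanly between radius, upper gradient, and measure.
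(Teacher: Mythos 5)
Your proposal is correct and follows essentially the same route as the paper's proof: transform the upper gradient via $ds_k = ds/d_\Om$, use Lemma~\ref{lem-k-est} to sandwich $B_k(x,r)$ between $d$-balls of radius comparable to $rd_\Om(x)$ on which $d_\Om \simeq d_\Om(x)$, apply the global Poincar\'e inequality for $\mu$ there, cancel the powers of $d_\Om(x)$ between radius, gradient and measure, and then invoke Theorem~\ref{thm-PI-arb-R0} to pass from radii $\le 1/8\la L$ to radii $\le R_1$ with dilation $L$. The paper even uses the same threshold $R_0 = 1/8\la L$ and the same intermediate dilation $4\la L$.
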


\begin{proof}
Let $u$ be a bounded measurable function on $\Om$
and $\ghat$ be an upper gradient of $u$ with respect to $k$.
Since the arc length parametrization $ds_k$ with respect to $k$ satisfies
\[
ds_k = \frac{ds}{d_\Om(\,\cdot\,)},
\]
we conclude that
\[
    \int_\ga \ghat \, ds_k = \int_\ga \frac{\ghat}{d_\Om(\,\cdot\,)} \, ds
\]
and thus $g(z):= \ghat(z)/d_\Om(z)$
is an upper gradient of $u$ with respect to $d$,
see the proof of Lemma~\ref{lem-PI-for-subWhitney} for further details.

Next, let
$x\in\Om$, $0<r\le R_0:=1/8\la L$, $B_k=B_k(x,r)$ and
$B=B(x,2r d_\Om(x))$.
We see, by Lemma~\ref{lem-k-est}, that
\[
\frac{1}{4L} B \subset B_k \subset B
\quad \text{and} \quad
\la B \subset 4 \la L B_k,
\]
where all the above balls have comparable $\mu$-measures,
as well as comparable $\mu^\alp$-measures.
Note that $d_\Om(z) \simeq d_\Om(x)$ for all $z \in 4\la L B_k$.
Thus,
\begin{align*}
  \vint_{B_k}|u-u_{B,\mu}|\, d\mu^\alp
   & \simle \vint_B |u-u_{B,\mu}|\, d\mu 
    \simle r d_\Om(x) \biggl(\vint_{\la B} g^p \,d\mu\biggr)^{1/p} \\
   & \simeq r  \biggl(\vint_{\la B} \ghat^p \, d\mu\biggr)^{1/p}
    \simle r  \biggl(\vint_{4\la LB_k} \ghat^p \, d\mu^\alp\biggr)^{1/p}.
\end{align*}
A standard argument as in~\cite[Lemma~4.17]{BBbook}
  makes it possible to replace $u_{B,\mu}$
  on the left-hand side
by $u_{B_k,\mu^\alp}$,
and thus $Y$ supports a \p-Poincar\'e
  inequality for balls of radii $\le R_0$, with dilation $4\la L$.
The conclusion now follows from Theorem~\ref{thm-PI-arb-R0}.
\end{proof}

\begin{remark}
Let $X$ be a Gromov hyperbolic space, equipped with a measure $\mu$,
and consider its uniformization $X_\eps$, together with the measure
$\mu_\be$, $\be>0$, as in Definition~\ref{def-muh-beta}.
With $\al=\be/\eps$, it is then easily verified that the pull-back
to $X$ of the measure $(\mu_\be)^\al$,
defined on the hyperbolization $(X_\eps,k)$ of
$X_\eps$, is comparable to the original measure $\mu$.
\end{remark}

\section{An indirect product of Gromov hyperbolic spaces}
\label{unif-gromov-skewproduct}

\emph{We assume in this section that
$X$ and $Y$ are two locally compact roughly
    starlike Gromov $\delta$-hyperbolic spaces.
    We  fix two points $z_X \in X$ and $z_Y \in Y$,
    and let $M$ be a common constant
    for the roughly starlike conditions with respect to
$z_X$ and $z_Y$.
    We also assume that $0 < \eps \le \eps_0(\de)$
    and that $z_X$ and $z_Y$ serve as centers
    for the uniformizations $X_\eps$ and $Y_\eps$.
}

\medskip

In general, the Cartesian product $X\times Y$ of two Gromov
hyperbolic spaces $X$ and $Y$ need not be Gromov hyperbolic;
for example, $\R\times\R$ is is not Gromov hyperbolic.
In this section, we shall construct
an indirect product metric on $X\times Y$ that 
does give us a Gromov hyperbolic space, namely we
set $X\times_\eps Y$ to be the Gromov hyperbolic
space $(X_\eps\times Y_\eps,k)$.
To do so, we first need to show that the Cartesian product
of two uniform spaces, equipped with the sum of their metrics,
is a uniform domain.
This can be proved using Theorems~1 and~2 in
Gehring--Osgood~\cite{GeOs} together
with Proposition~2.14 in Bonk--Heinonen--Koskela~\cite{BHK-Unif},
but this would result in
a highly nonoptimal uniformity constant.
We instead give a more self-contained proof that also yields a better
estimate of the uniformity constant for the Cartesian product.

\begin{example}  \label{ex-R-times-R}
Recall that the uniformization $\R_\eps$ of the hyperbolic
$1$-dimensional space $\R$ is isometric to $(-\tfrac1\eps,\tfrac1\eps)$,
see Example~\ref{ex:R-to-I}.
Hence, for all $\eps>0$, $\R_\eps\times\R_\eps$ is a planar square
region, which is biLipschitz equivalent to the planar disk.
Thus also its hyperbolization $\R\times_\eps \R$ is biLipschitz
equivalent to the hyperbolic disk, which is the model 2-dimensional
hyperbolic space.
\end{example}

\begin{lem}  \label{lem-long-banana}
Let $(\Om,d)$ be a bounded $A$-uniform space.
Then for every pair of points $x,y\in\Om$ and for every $L$ with
$d(x,y)\le L \le \diam\Om$, there exists a curve $\ga\subset \Om$ of length
\begin{equation}   \label{eq-long-banana}
\frac{L}{5A} \le l(\ga) \le (A+1)L,
\end{equation}
connecting $x$ to $y$ and such that for all $z\in\ga$,
\[
d_\Om(z) \ge \frac{1}{16A^2} \min \{l(\ga_{x,z}),l(\ga_{z,y})\},
\]
where $\ga_{x,z}$ and $\ga_{z,y}$ are the subcurves
of $\ga$ from  $x$ to $z$
and from $z$ to $y$, respectively.
\end{lem}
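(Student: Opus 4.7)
The plan is to prove the lemma by cases on the relationship between $L$ and $d(x,y)$.

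Case A ($d(x,y)\ge L/(5A)$): Take $\ga$ to be any $A$-uniform curve from $x$ to $y$; then $l(\ga)\in[d(x,y),Ad(x,y)]\subseteq[L/(5A),AL]\subseteq[L/(5A),(A+1)L]$, and the $A$-uniformity of $\ga$ gives $d_\Om(z)\ge \min(l(\ga_{x,z}),l(\ga_{z,y}))/A\ge \min(l(\ga_{x,z}),l(\ga_{z,y}))/(16A^2)$ since $A\ge 1$.

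Case B ($d(x,y)<L/(5A)$): Here the direct uniform curve is too short, and I would construct $\ga$ as a concatenation $\ga=\ga_1+\ga_2$ of two $A$-uniform curves $\ga_1:x\to w$ and $\ga_2:w\to y$ through an intermediate point $w$ chosen to be both far enough from $x$ that $l(\ga)\ge L/(5A)$ and deep enough in $\Om$ that the boundary-distance estimate survives at the junction. To locate such $w$: since $\diam\Om\ge L$, pick $a,b\in\Om$ with $d(a,b)\ge L-\eta$ for small $\eta>0$, join them by an $A$-uniform curve, and let $p^*$ be its arc-length midpoint; the $A$-uniformity of this curve gives $d_\Om(p^*)\simge L/A$. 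Next, take an $A$-uniform curve $\alpha:x\to p^*$ and set $w:=\alpha(\min(L/(2A),l(\alpha)))$. A short case analysis---splitting according to whether $w$ is a strict interior point of $\alpha$ (where $A$-uniformity of $\alpha$ supplies depth $\simge L/A^2$) or $w=p^*$ (where the depth is inherited directly from $p^*$), and using continuity of $d_\Om$ to handle the near-degenerate case $l(\alpha)\approx L/(2A)$---yields $d_\Om(w)\simge L/A^2$.

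With this $w$ in hand, the bounds $d(x,w)\le L/(2A)$ and $d(x,y)<L/(5A)$ combined with the triangle inequality give $d(w,y)\le 7L/(10A)$, hence $l(\ga_2)\le 7L/10$, and the length estimates $L/(5A)\le L/(2A)\le l(\ga)\le L/(2A)+7L/10\le 6L/5\le (A+1)L$ follow. The boundary-distance estimate for $z\in\ga_1\cup\ga_2$ is then obtained by a case split: when $z$ is closer to $x$ (or to $y$) than to the junction, the native $A$-uniformity of $\ga_1$ (or $\ga_2$) yields $d_\Om(z)\ge \min(l(\ga_{x,z}),l(\ga_{z,y}))/A$; when $z$ is near the junction $w$, the depth $d_\Om(w)\simge L/A^2$ combined with continuity of $d_\Om$, and compared against the explicit upper bounds $l(\ga_1)\le L/2$ and $l(\ga_2)\le 7L/10$, keeps $d_\Om(z)$ above the threshold $\min(l(\ga_{x,z}),l(\ga_{z,y}))/(16A^2)$, with the generous factor $16A^2$ absorbing the arithmetic.

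The main obstacle is precisely the junction analysis. Since $w$ is a shared endpoint of $\ga_1$ and $\ga_2$, the native $A$-uniformity of either curve degrades to the trivial bound $d_\Om(w)\ge 0$ at $w$ itself, so the required lower bound on $d_\Om(w)$ (and on $d_\Om(z)$ for $z$ near $w$) must be supplied by the choice of $w$ and is not controlled by $\ga_1$ or $\ga_2$ alone. This is what forces the indirect construction of $w$ through the prior deep point $p^*$, and requires careful tracking of the near-degenerate subcase $l(\alpha)\approx L/(2A)$ via continuity of $d_\Om$. The factor $16A^2$ in the conclusion is exactly what is needed to cover these combined losses.
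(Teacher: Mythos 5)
Your overall strategy in Case B --- routing the curve through a point deep inside $\Om$, whose depth $\simge L/A$ comes from the midpoint of a uniform curve joining two nearly diametrical points --- is essentially the paper's, and your Case A is a correct shortcut. But Case B has a genuine gap: the lower bound in \eqref{eq-long-banana} fails for your construction. The deep point $p^*$ is determined by $a$ and $b$ alone, independently of $x$ and $y$, so nothing prevents $x$ from lying close to $p^*$. Then $l(\alpha)<L/(2A)$, your rule gives $w=p^*$, and $l(\ga)\le l(\alpha)+A\,d(w,y)\le l(\alpha)+A\bigl(l(\alpha)+d(x,y)\bigr)$, which is arbitrarily small when $x$ is near $p^*$ and $y$ is near $x$ (Case B permits $d(x,y)$ arbitrarily small). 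So the asserted chain ``$L/(5A)\le L/(2A)\le l(\ga)$'' is false in this sub-case. The paper's proof meets exactly this situation (its case~2, when the uniform curve from $x$ to the deep center is short) and repairs it by concatenating an artificial back-and-forth excursion $\ga'$ of length $L/10A$ based at the deep endpoint, inside a half-ball where $d_\Om$ stays controlled. Your construction has no such padding mechanism, and without one the length lower bound cannot be recovered.

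A secondary issue is the constant. If $\ga_1$ is a fresh $A$-uniform curve from $x$ to $w$, as you state, then $l(\ga_1)$ can be as large as $A\,d(x,w)\le L/2$ while $d_\Om(w)\simge L/A^2$ only; at points $z\in\ga_1$ whose distance to $w$ along $\ga_1$ is comparable to $d_\Om(w)$, neither the uniformity of $\ga_1$ nor the $1$-Lipschitz continuity of $d_\Om$ gives more than $d_\Om(z)\simge L/A^3$, whereas $\min\{l(\ga_{x,z}),l(\ga_{z,y})\}$ can there be of order $L$, so the required bound is of order $L/A^2$ and the factor $16A^2$ does not absorb the arithmetic for large $A$. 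The paper avoids this by keeping both approach segments short (length at most $L/5A$) and arranging $d_\Om(\xhat)\ge l(\gah_x)/4A$, i.e.\ junction depth proportional to the approach length with only one factor of $A$ lost, the long segment being placed in the middle between the two deep junctions. If instead you meant $\ga_1=\alpha|_{[0,t_0]}$ (so that $l(\ga_1)\le L/(2A)$, consistent with your bound $l(\ga)\le L/(2A)+7L/10$), the constants can be made to work, but then $\ga_1$ is not an $A$-uniform curve from $x$ to $w$ and the ``native uniformity of $\ga_1$'' step must be replaced by the one-sided cigar estimate inherited from $\alpha$. Under either reading, the first gap remains.
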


\begin{proof}
Choose $x_0\in\Om$ such that
$d_\Om(x_0) \ge \tfrac45 \sup_{z\in\Om} d_\Om(z)$.
Then for all $z\in\Om$, with $\ga_{z,x_0}$ being an $A$-uniform curve from
$z$ to $x_0$, and $z'$  its midpoint,
\[
d(z,x_0) \le l(\ga_{z,x_0}) \le 2A d_\Om(z') \le \tfrac52 A d_\Om(x_0).
\]
Hence $\diam\Om \le 5A d_\Om(x_0)$.
Now, let $x,y\in\Om$ and $L$ be as in the statement of the lemma.
Let $\ga_{x,x_0}$ be an $A$-uniform curve from $x$ to $x_0$.
We shall distinguish two cases:

1.\ If $L\le 5Al(\ga_{x,x_0})$ then let
$\gah_x$ be the restriction of $\ga$ to $[0,L/10A]$
and $\xhat=\ga(L/10A)$ be its new endpoint.

2.\ If $L\ge 5Al(\ga_{x,x_0})$ then let
$\ga_x$ be the restriction of $\ga$ to $[0,\tfrac12 l(\ga_{x,x_0})]$
and $\xhat=\ga(\tfrac12 l(\ga_{x,x_0}))$ be its new endpoint. Note that
\[
d_\Om(\xhat) \ge d_\Om(x_0) - \frac{l(\ga_{x,x_0})}{2}
\ge \frac{\diam\Om}{5A} - \frac{L}{10A}
\ge \frac{L}{10A}.
\]
Choose a curve $\ga'$ of length $L/10A$, which starts and ends
at $\xhat$.
Then for all $z\in\ga'$,
\[
d_\Om(z) \ge d_\Om(\xhat) - \frac{L}{20A} \ge \frac{L}{20A}.
\]
Thus, concatenating $\ga'$ to $\ga_x$ we obtain a curve $\gah_x$ from $x$ to
$\xhat$ of length
\begin{equation}   \label{eq-length-gah-x}
\frac{L}{10A} \le l(\gah_x) \le \frac{L}{5A}
\end{equation}
and such that for all $z\in\gah_x$,
\begin{equation}   \label{eq-gah-x-uniform}
d_\Om(z) \ge  \frac{1}{\max\{4,A\}} l(\gah_{x,z}) \ge
\frac{1}{4A} l(\gah_{x,z}),
\end{equation}
where $\gah_{x,z}$ is the part of $\gah_x$ from $x$ to $z$.
The curve $\gah_x$, obtained in case 1, clearly satisfies
\eqref{eq-length-gah-x} and \eqref{eq-gah-x-uniform} as well.

A similar construction, using an $A$-uniform curve from $y$ to $x_0$,
provides us with a curve $\gah_y$ from $y$ to $\yhat$,
satisfying \eqref{eq-length-gah-x} and \eqref{eq-gah-x-uniform} with $x$
replaced by $y$.

Now, let $\gat$ be a uniform curve from $\xhat$ to $\yhat$ and let $\ga$
be the concatenation of $\gah_x$ with $\gat$ and $\gah_y$ (reversed).
Since $d(\xhat,\yhat) \le d(x,y) + 2L/5A \le (1+2/5A)L$,
we see that
\[
l(\ga) \le A \Bigl( 1+ \frac{2}{5A} \Bigr) L +
\frac{2L}{5A} \le (A+1)L
\]
and the right-hand side inequality
in~\eqref{eq-long-banana} holds, while the
left-hand side follows from~\eqref{eq-length-gah-x}.

To prove the second property, in view of \eqref{eq-gah-x-uniform},
it suffices to consider $z\in\gat$.
Without loss of generality, assume that
the part $\gat_{\xhat,z}$ of $\gat$ from $\xhat$
to $z$ has length at most $\tfrac12 l(\gat)$.
Note that \eqref{eq-gah-x-uniform},
applied to the choice $z=\xhat$, gives
\begin{equation}   \label{eq-z=xhat}
d_\Om(\xhat) \ge \frac{1}{4A} l(\gah_x).
\end{equation}
Again, we distinguish two cases.

1.\ If $\tfrac12 d_\Om(\xhat)\ge l(\gat_{\xhat,z})$ then
by \eqref{eq-z=xhat},
\[
d_\Om(z) \ge d_\Om(\xhat) - l(\gat_{\xhat,z}) \ge \tfrac12 d_\Om(\xhat)
\ge \max \Bigl\{ l(\gat_{\xhat,z}), \frac{1}{8A} l(\gah_x) \Bigr\},
\]
and hence we obtain that
\[
d_\Om(z) \ge \tfrac12 d_\Om(\xhat)
\ge \frac{1}{16A} (l(\gat_{\xhat,z}) + l(\gah_x))
= \frac{1}{16A} l(\ga_{x,z}),
\]
where $\ga_{x,z}$ is the part of $\ga$ from $x$ to $z$.

2.\ On the other hand, if $\tfrac12 d_\Om(\xhat)\le l(\gat_{\xhat,z})$
then by \eqref{eq-z=xhat} again,
\[
d_\Om(z) \ge \frac1A l(\gat_{\xhat,z}) \ge \frac{1}{2A} d_\Om(\xhat)
\ge \frac{1}{8A^2} l(\gah_x),
\]
We conclude that
\[
d_\Om(z) \ge \frac{1}{16A^2} (l(\gat_{\xhat,z}) + l(\gah_x))
= \frac{1}{16A^2} l(\ga_{x,z}).\qedhere
\]
\end{proof}

\begin{prop} \label{prop-unif-product}
Let $(\Om,d)$ and $(\Om',d')$ be two bounded uniform spaces,
with diameters and uniformity constants $D$, $D'$, $A$ and $A'$, respectively.
Then $\Omt=\Om \times\Om'$ is also a bounded uniform space with respect
to the metric
\begin{equation} \label{eq-dt}
\dt((x,x'),(y,y')) = d(x,y) + d'(x',y'),
\end{equation}
with  uniformity constant
\[
\tilde{A} = \frac{80 [(A+1)D+(A'+1)D']}{\min\{D/A^3,D'/(A')^3\}}.
\]
\end{prop}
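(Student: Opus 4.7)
The plan is to build, for any pair $(x,x'),(y,y')\in\Omt:=\Om\times\Om'$, a single uniform curve $\tilde\gamma$ in $\Omt$ by running the banana curves produced by Lemma~\ref{lem-long-banana} in each factor in parallel. First I would identify the geometry of $\Omt$: its completion is $\clOm\times\overline{\Om'}$, since $\dt$-Cauchy sequences are coordinatewise Cauchy, so its boundary in the completion is
\[
  \bdy\Omt = (\bdy\Om\times\overline{\Om'})\cup(\clOm\times\bdy\Om'),
\]
and consequently $d_{\Omt}((z,z')) = \min\{d_\Om(z),d_{\Om'}(z')\}$. Note that $\Omt$ is automatically quasiconvex because each factor is.

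Next I would set $L:=\dt((x,x'),(y,y'))=d(x,y)+d'(x',y')$ and choose auxiliary parameters
\[
  L_\Om := \min\{L,D\}, \qquad L_{\Om'} := \min\{L,D'\}.
\]
Since $d(x,y)\le\min\{L,D\}=L_\Om\le D$ and analogously $d'(x',y')\le L_{\Om'}\le D'$, Lemma~\ref{lem-long-banana} applies in each factor and produces curves $\gamma\subset\Om$ from $x$ to $y$ and $\gamma'\subset\Om'$ from $x'$ to $y'$ with lengths in $[L_\Om/5A,(A+1)L_\Om]$ and $[L_{\Om'}/5A',(A'+1)L_{\Om'}]$ respectively, together with the banana distance-to-boundary estimates.

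I would then define the product curve $\tilde\gamma\colon[0,1]\to\Omt$ by
\[
  \tilde\gamma(\sigma) := \bigl(\gamma(\sigma l(\gamma)),\,\gamma'(\sigma l(\gamma'))\bigr),
\]
so that both coordinates reach their endpoints simultaneously. Because each coordinate is arc length parametrized and moves with unit speed in its own factor, the $\dt$-length of $\tilde\gamma|_{[0,\sigma]}$ equals $\sigma(l(\gamma)+l(\gamma'))$; in particular
\[
  l_{\dt}(\tilde\gamma) = l(\gamma)+l(\gamma') \le (A+1)L_\Om+(A'+1)L_{\Om'}.
\]
Applying the distance estimates from Lemma~\ref{lem-long-banana} in each factor at arc lengths $\sigma l(\gamma)$ and $\sigma l(\gamma')$, together with the lower bounds on $l(\gamma)$ and $l(\gamma')$, gives
\[
  d_{\Omt}(\tilde\gamma(\sigma)) \ge \min\{\sigma,1-\sigma\}\,\min\Bigl\{\frac{L_\Om}{80A^3},\,\frac{L_{\Om'}}{80(A')^3}\Bigr\}.
\]
Converting $\sigma$ to the $\dt$-arc length parameter $\tilde s=\sigma\cdot l_{\dt}(\tilde\gamma)$ then reduces the verification of the uniformity condition to a comparison of the length bound against this distance bound.

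The main obstacle is the case-by-case bookkeeping that produces the stated constant $\tilde A$. When $L\le\min\{D,D'\}$ we have $L_\Om=L_{\Om'}=L$ and the two bounds combine cleanly, yielding a constant depending only on $A$ and $A'$. When $L$ exceeds one or both diameters, the caps $L_\Om=D$ and $L_{\Om'}=D'$ permit $l_{\dt}(\tilde\gamma)$ to exceed $L$, but it remains bounded by $(A+1)D+(A'+1)D'$, while the distance to $\bdy\Omt$ degrades to a constant multiple of $\min\{D/A^3,D'/(A')^3\}$. Taking the worst case across these regimes gives the value of $\tilde A$ in the proposition, with the numerator $(A+1)D+(A'+1)D'$ coming from the maximal admissible length of the product banana and the denominator $\min\{D/A^3,D'/(A')^3\}$ from the corresponding minimal transverse depth.
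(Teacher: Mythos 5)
Your construction is the same as the paper's: run the two curves from Lemma~\ref{lem-long-banana} in parallel so that both coordinates reach their endpoints simultaneously, identify $d_{\Omt}((z,z'))=\min\{d_\Om(z),d_{\Om'}(z')\}$, and compare the length upper bound with the transverse depth lower bound. All of your intermediate estimates are correct. The gap is in the last paragraph: the ``case-by-case bookkeeping'' is exactly where the work lies, and with your choice of auxiliary lengths $L_\Om=\min\{L,D\}$, $L_{\Om'}=\min\{L,D'\}$ it does \emph{not} produce the stated constant. Following your displayed bounds, the uniformity condition requires
\[
\tilde A \;\ge\; \frac{80\,[(A+1)L_\Om+(A'+1)L_{\Om'}]}{\min\{L_\Om/A^3,\;L_{\Om'}/(A')^3\}},
\]
and for small separations ($L\le\min\{D,D'\}$, so $L_\Om=L_{\Om'}=L$) the right-hand side equals $80\,(A+A'+2)\max\{A^3,(A')^3\}$, which is independent of $D,D'$ and can exceed $\tilde A$. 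Concretely, with $A=2$, $A'=1$, $D=8$, $D'=1$ your requirement is $80\cdot 5\cdot 8=3200$, whereas the proposition's constant is $80\cdot 26/\min\{1,1\}=2080$. So the assertion that ``taking the worst case across these regimes gives the value of $\tilde A$'' is unjustified and, as the estimates stand, false; at best one would get uniformity with a larger constant, which is weaker than the stated proposition.

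The paper's fix is a different normalization of the auxiliary lengths: set $\La=\max\{d(x,y)/D,\;d'(x',y')/D'\}\le1$ and take $L=\La D$, $L'=\La D'$. This still satisfies the hypotheses of Lemma~\ref{lem-long-banana} in each factor ($d(x,y)\le L\le D$, etc.), but now the ratio $L:L'=D:D'$ is fixed, so the quotient above collapses to
\[
\frac{80\,\La[(A+1)D+(A'+1)D']}{\La\min\{D/A^3,\;D'/(A')^3\}}=\tilde A
\]
identically in $\La$, i.e.\ uniformly over all scales; the length condition then follows from $\dt(\xt,\yt)\ge\La\min\{D,D'\}\ge\La\min\{D/A^3,D'/(A')^3\}$. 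If you replace your caps $\min\{L,D\}$ and $\min\{L,D'\}$ by this proportional scaling, the rest of your argument goes through verbatim and yields the constant in the statement.
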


\begin{proof}
The boundedness is clear.
Let $\xt=(x,x')$ and $\yt=(y,y')$ be two distinct points in $\Omt$,
and let
\[
\La = \max \biggl\{ \frac{d(x,y)}{D}, \frac{d'(x',y')}{D'} \biggr\}\le 1,
\quad L=\La D\ge d(x,y) \quad \text{and} \quad L'=\La D'\ge d'(x',y').
\]
Note that
\begin{equation} \label{eq-2}
\La(D+D') \ge \dt(\xt,\yt) \ge \La \min\{D,D'\} 
\ge  \La \min \biggl\{ \frac{D}{A^3},\frac{D'}{(A')^3} \biggr\}.
\end{equation}
We use Lemma~\ref{lem-long-banana} to find curves $\ga\subset\Om$ and
$\ga'\subset\Om'$, connecting $x$ to $y$ and $x'$ to $y'$,
respectively, of lengths
\begin{equation} \label{eq-3}
\frac{L}{5A} \le l(\ga) \le (A+1)L
\quad \text{and} \quad
\frac{L'}{5A'} \le l(\ga') \le (A'+1)L',
\end{equation}
and such that for all $z\in\ga$,
\begin{equation}    \label{eq-choose-ga-ga'}
d_\Om(z) \ge \frac{1}{16A^2} \min \{l(\ga_{x,z}),l(\ga_{z,y})\},
\end{equation}
where $\ga_{x,z}$ and $\ga_{z,y}$ are the parts of $\ga$ from  $x$ to $z$
and from $z$ to $y$, respectively; 
similar statements holding true for  $z'\in\ga'$ and $A'$.
Note that $\La>0$ since $\xt \ne \yt$.
Hence $L,L'>0$ and, by~\eqref{eq-3},
the curves $\ga$ and $\ga'$ are nonconstant.

Next, assuming that $\ga$ and $\ga'$ are arc length parametrized,
we show that the curve
\[
\gat(t) = \biggl( \ga \biggl(\frac{t}{l(\ga)} \biggr),
                 \ga' \biggl(\frac{t}{l(\ga')} \biggr)  \biggr),
\quad t\in[0,1],
\]
is an $\tilde{A}$-uniform curve in $\Omt$ connecting $\xt$ to $\yt$.
To see this, note that we have by the definition \eqref{eq-dt}
of $\dt$ that for all $0\le s\le t \le 1$, using \eqref{eq-3} and then \eqref{eq-2},
\begin{align*}
  l(\gat|_{[s,t]})
  =(t-s)(l(\ga)+l(\ga'))
&\le (t-s) [(A+1)D+(A'+1)D'] \La   \\
&\le (t-s) \tilde{A} 
\dt(\xt,\yt).
\end{align*}
In particular, $\gat$ has the correct length. Since
\[
\bdy\Omt = (\bdy\Om \times \Om') \cup (\Om \times \bdy\Om')
\cup (\bdy\Om \times \bdy\Om'),
\]
we see that for all $\gat(t)=(z,z')$ with $0 \le t \le \frac12$,
using \eqref{eq-choose-ga-ga'} and then \eqref{eq-3},
\[
d_\Om(z) \ge \frac{l(\ga)t}{16A^2}
\ge \frac{Lt}{80A^3}
= \frac{\La D t}{80A^3},
\]
and similarly $d_\Om(z') \ge \La D' t/80(A')^3$.
Thus, using \eqref{eq-3} for the last inequality,
\begin{align*}
  d_{\Omt}(\gat(t)) & = \min \{ d_\Om(z), d_{\Om'}(z') \} 
   \ge \frac{\La t}{80}\min\biggl\{\frac{D}{A^3},\frac{D'}{(A')^3}\biggr\} \\
  & = \frac{t}{\tilde{A}} [(A+1)L+(A'+1)L'] 
   \ge \frac{t}{\tilde{A}} [l(\ga)+l(\ga')] 
   =\frac{l(\gat_{\xt,\gat(t)})}{\tilde{A}}.
\end{align*}
As a similar estimate holds for $\frac12 \le t \le 1$,
we see that $\gat$ is indeed an $\tilde{A}$-uniform curve.
\end{proof}

We next see that
the projection map $\pi:X\times_\eps Y\to X$ given by $\pi((x,y))=x$
is Lipschitz continuous.

\begin{prop}   \label{prop-Lip-proj}
The above-defined projection map
$\pi:X\times_\eps Y\to X$  is $(C/\eps)$-Lipschitz continuous, with
$C$ depending only on $\eps_0$ and $M$.
\end{prop}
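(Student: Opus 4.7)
The plan is to exploit the key estimate from Lemma~\ref{lem:dist-to-eps-bdry}, which says that in $X_\eps$ the boundary distance satisfies $\eps d_\eps(x) \simeq \rho_\eps(x)$, so that the conformal factor $\rho_\eps$ used in the uniformization and the Whitney scale $d_\eps(\cdot)$ used in the quasihyperbolic metric differ by a constant factor of order $\eps$. Combining this with the fact that the product metric $\tilde d$ dominates the component metrics on projections, I can push any path on $X\times_\eps Y$ down to a path in $X_\eps$ (equivalently in $X$) and control the quasihyperbolic integrand from below.

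Concretely, I would first show that for any rectifiable curve $\ga$ in $X$ joining $x_1$ to $x_2$,
\[
\int_\ga \frac{ds_\eps}{d_\eps(\ga)}
= \int_\ga \frac{\rho_\eps}{d_\eps(\ga)}\,ds
\ge \frac{\eps}{C_0}\int_\ga ds
\ge \frac{\eps}{C_0}\,d(x_1,x_2),
\]
where the first inequality uses the upper bound $d_\eps(x) \le C_0\rho_\eps(x)/\eps$ from Lemma~\ref{lem:dist-to-eps-bdry} (with $C_0=2e^{\eps_0 M}-1$, independent of $\eps\in(0,\eps_0]$), and the last uses that $X$ is geodesic. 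Thus the quasihyperbolic metric $k_{X_\eps}$ on the uniform space $X_\eps$ satisfies $k_{X_\eps}(x_1,x_2)\ge (\eps/C_0)\,d(x_1,x_2)$.

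Next, let $\tilde\ga:[0,l]\to X_\eps\times Y_\eps$ be any rectifiable curve (for the sum metric $\tilde d$) from $(x_1,y_1)$ to $(x_2,y_2)$, parametrized by $\tilde d$-arc length, and set $\ga=\pi\circ\tilde\ga$. Since $\tilde d((x,x'),(y,y'))=d_\eps(x,y)+d_\eps'(x',y')\ge d_\eps(x,y)$, the projection $\ga$ is $1$-Lipschitz in $d_\eps$, so if $\sigma(t)$ denotes the $d_\eps$-length of $\ga|_{[0,t]}$, then $\sigma'\le1$ a.e. Using also $d_{\tilde\Om}((x,y))=\min\{d_\eps(x),d_\eps'(y)\}\le d_\eps(x)$ and the change of variables $s_\eps=\sigma(t)$, I get
\[
\int_{\tilde\ga}\frac{d\tilde s}{d_{\tilde\Om}(\tilde\ga)}
\ge \int_0^l\frac{\sigma'(t)\,dt}{d_\eps(\ga(t))}
= \int_\ga\frac{ds_\eps}{d_\eps(\ga)}
\ge \frac{\eps}{C_0}\,d(x_1,x_2).
\]
Taking the infimum over all such $\tilde\ga$ yields $k((x_1,y_1),(x_2,y_2))\ge (\eps/C_0)\,d(x_1,x_2)$, which gives the claim with $C=C_0=2e^{\eps_0 M}-1$, depending only on $\eps_0$ and $M$.

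There is no genuine obstacle here; the only care needed is in step three, where I must justify the change of variable for a possibly non-injective Lipschitz parametrization. This is standard (the area formula for $1$-Lipschitz maps on an interval, or equivalently the fact that arc length is additive along the projection), but it is the one place where the inequality $\int(\cdots)dt\ge\int(\cdots)ds_\eps$ has to be argued carefully rather than being immediate from the pointwise bound $ds_\eps\le d\tilde s$.
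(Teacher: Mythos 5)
Your proof is correct, and it takes a genuinely different route from the one in the paper. The paper argues locally: it first invokes the fact that $X\times_\eps Y$ is geodesic to reduce to a local Lipschitz bound, then works on subWhitney scales, using Theorem~\ref{thm-subWhitney-balls} to compare $d(x,x')$ with $d_\eps(x,x')/\rho_\eps(x)$ and Lemma~\ref{lem-k-est} to identify $k_\eps((x,y),(x',y'))$ with $(d_\eps(x,x')+d_\eps(y,y'))/\min\{d_\eps(x),d_\eps(y)\}$ up to constants. You instead prove a single global curve estimate: for any rectifiable $\tilde\ga$ in the product, the quasihyperbolic length integrand is bounded below pointwise via $d_{\widetilde\Om}\le d_\eps(\pi(\cdot))\le C_0\rho_\eps(\pi(\cdot))/\eps$ from Lemma~\ref{lem:dist-to-eps-bdry}, and the identity $ds_\eps=\rho_\eps\,ds$ converts the result into the $d$-length of the projected curve, hence into $d(x_1,x_2)$. (A small quibble: the final inequality $\int_\ga ds\ge d(x_1,x_2)$ needs only that length dominates distance, not that $X$ is geodesic.) Your approach buys several things: it avoids the appeal to geodesicity of the product and to Theorem~\ref{thm-subWhitney-balls} entirely, it uses only the upper half of the two-sided estimate \eqref{eq-BHK-d-rho}, it gives the explicit constant $C=C_0=2e^{\eps_0 M}-1$, and it actually proves the slightly stronger statement that $\pi$ contracts quasihyperbolic length of every curve into $d$-length by the factor $C_0/\eps$. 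The paper's local argument, on the other hand, is the template reused almost verbatim in the proof of Proposition~\ref{prop-canonical-Lip}, where two-sided local comparisons are genuinely needed; that is presumably why it is presented in this form. The one step you flag, the change of variables for a possibly non-injective $1$-Lipschitz parametrization, is indeed the only point requiring care, and it is the standard line-integral formula $\int_\ga f\,ds_\eps=\int_0^l f(\ga(t))\,\sigma'(t)\,dt$ for Lipschitz curves, consistent with the paper's own use of $ds_\eps=\rho_\eps\,ds$ in \eqref{eq-ds_eps}.
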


\begin{proof}
Since $X\times_\eps Y$ is geodesic,
it suffices to show that $\pi$ is locally $(C/\eps)$-Lipschitz
with $C$ independent of the locality.
With $C_1=e^{-(1+\eps M)}$ and $C_2=2e(2e^{\eps M}-1)$ as in 
Theorem~\ref{thm-subWhitney-balls}, for
$(x,y)\in X\times Y$ let
\[
r=\frac{C_1\min\{d_\eps(x),d_\eps(y)\}}{2C_2}, \quad
x'\in B_\eps(x,r)
\quad \text{and} \quad y'\in B_\eps(y,r). 
\]
The last part of Theorem~\ref{thm-subWhitney-balls}
together with Lemma~\ref{lem:dist-to-eps-bdry} then gives
\[
d(\pi(x,y),\pi(x',y'))=d(x,x') \simeq \frac{d_\eps(x,x')}{\rho_\eps(x)}
\simeq \frac{d_\eps(x,x')}{\eps d_\eps(x)},
\]
with comparison constants depending only on $\eps_0$ and $M$.

Let $k_\eps$ denote the quasihyperbolic metric 
on $\Om:=X_\eps\times Y_\eps$.
Note that since both $X_\eps$ and $Y_\eps$ are length spaces, so is $\Om$.
As $C_2/C_1 >2e$, we see that
\[
d_\Om((x,y)) = \min\{d_\eps(x),d_\eps(y)\} > 2e(d_\eps(x,x')+d_\eps(y,y'))
\]
and thus \eqref{eq-k(x,y)} in Lemma~\ref{lem-k-est} with $L=e$ 
yields
\begin{equation} \label{eq-keps}
k_\eps((x,y),(x',y')) \simeq
\frac{d_\eps(x,x')+d_\eps(y,y')}{\min\{d_\eps(x),d_\eps(y)\}}.
\end{equation}
It follows that
\[
d(\pi(x,y),\pi(x',y'))
\simle \frac{1}{\eps} k_\eps((x,y),(x',y')).\qedhere
\]
\end{proof}

Next, we shall see how $X\times_\eps Y$ compares to $X\times_{\eps^\prime}Y$.

\begin{prop}   \label{prop-canonical-Lip}
Let $0 < \eps' < \eps \le \eps_0(\de)$. The canonical identity maps
\[
\Phi:X\times_\eps Y\to X\times_{\eps^\prime}Y
\quad \text{and} \quad
\Psi:X_{\eps^\prime}\times Y_{\eps^\prime}\to X_\eps\times Y_\eps
\]
are Lipschitz continuous.
More precisely, there is a constant $C'$, depending only on $\eps_0$ and $M$,   such that
$\Phi$ is $(C'\eps'/\eps)$-Lipschitz while $\Psi$ is $C'$-Lipschitz.

Moreover, neither $\Phi^{-1}$ nor $\Psi^{-1}$ is Lipschitz continuous.
\end{prop}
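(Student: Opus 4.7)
My plan is to exploit the pointwise monotonicity $\rho_\eps\le\rho_{\eps'}$ on both $X$ and $Y$ (which holds because $\eps>\eps'>0$ and the exponents are nonpositive), together with an explicit description of the quasihyperbolic line element on the product uniform space $\Om_\eps:=X_\eps\times Y_\eps$ obtained from Lemma~\ref{lem:dist-to-eps-bdry}.

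For $\Psi$: any rectifiable curve $\ga\subset X$ satisfies $\int_\ga\rho_\eps\,ds\le\int_\ga\rho_{\eps'}\,ds$, so taking infima yields $d_\eps(x,x')\le d_{\eps'}(x,x')$ for all $x,x'\in X$, and analogously on $Y$. With the sum metric on the product (see Proposition~\ref{prop-unif-product}), $\Psi$ is therefore $1$-Lipschitz. For $\Phi$, I work at the level of line elements. Since $\bdy\Om_\eps$ splits along the two factors, Lemma~\ref{lem:dist-to-eps-bdry} gives $d_{\Om_\eps}((x,y))=\min\{\dist_\eps(x,\bdy_\eps X),\dist_\eps(y,\bdy_\eps Y)\}\simeq\min\{\rho_\eps^X(x),\rho_\eps^Y(y)\}/\eps$, where $\rho_\eps^X(x)=e^{-\eps d(x,z_X)}$ and analogously for $Y$. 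For a curve $\tilde\ga=(\ga,\eta)$ in $X\times Y$ with original $d$-arc-length elements $ds^X,ds^Y$, the $k_\eps$-line element is then comparable to $\eps(\rho_\eps^X\,ds^X+\rho_\eps^Y\,ds^Y)/\min\{\rho_\eps^X,\rho_\eps^Y\}$. Assuming without loss of generality $\rho_\eps^X\le\rho_\eps^Y$, i.e.\ $\Delta:=d(\ga,z_X)-d(\eta,z_Y)\ge 0$, this reduces to $\eps(ds^X+e^{\eps\Delta}\,ds^Y)$, while the $k_{\eps'}$-line element becomes $\eps'(ds^X+e^{\eps'\Delta}\,ds^Y)$. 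Because $\Delta\ge 0$ and $\eps'<\eps$, both terms are bounded by $(\eps'/\eps)$ times the corresponding $k_\eps$-terms (for the $ds^Y$-term, $\eps'e^{\eps'\Delta}\le(\eps'/\eps)\cdot\eps e^{\eps\Delta}$). Integrating this line-element inequality along a curve almost realizing $k_\eps((x,y),(x',y'))$ delivers $k_{\eps'}((x,y),(x',y'))\le C'(\eps'/\eps)k_\eps((x,y),(x',y'))$.

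For the non-Lipschitz claims I exhibit points where the reverse ratios blow up, using that $X$ and $Y$ are unbounded. For $\Psi^{-1}$: for $x\in X$ with $R=d(x,z_X)$ arbitrarily large and $x'$ very close to $x$, Theorem~\ref{thm-subWhitney-balls} gives $d_\eps(x,x')\simeq\rho_\eps(x)d(x,x')$ and the same with $\eps'$, so $d_{\eps'}(x,x')/d_\eps(x,x')\simeq e^{(\eps-\eps')R}\to\infty$. For $\Phi^{-1}$: taking points $(x,y)$ and $(x,y')$ with $y'$ close to $y$ in $Y$ and $d(x,z_X)\gg d(y,z_Y)$, the same line-element computation specialized to pure $Y$-motion yields $k_\eps\simeq\eps e^{\eps\Delta}d(y,y')$ and $k_{\eps'}\simeq\eps'e^{\eps'\Delta}d(y,y')$ with $\Delta=d(x,z_X)-d(y,z_Y)$, giving $k_\eps/k_{\eps'}\simeq(\eps/\eps')e^{(\eps-\eps')\Delta}\to\infty$.

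The main technical point to navigate is ensuring the line-element computation for $\Phi$ is backed by genuine curves; this is fine because $\Om_\eps$ is a (bounded, locally compact) uniform space by Proposition~\ref{prop-unif-product}, so its quasihyperbolization $X\times_\eps Y$ is a (Gromov hyperbolic) length space in which almost-geodesic approximations exist, and rectifiable curves in $X\times Y$ are simultaneously rectifiable for $d_\eps$ and for $d_{\eps'}$ on each factor by the remarks near Lemma~\ref{lem:dist-to-eps-bdry}.
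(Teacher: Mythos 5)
Your proposal is correct and follows essentially the same route as the paper: your line-element inequality is just the infinitesimal form of the paper's key local estimate \eqref{eq-k-with-rho} for $k_\eps$ on subWhitney scales, and your blow-up examples for $\Phi^{-1}$ and $\Psi^{-1}$ coincide with the paper's (sending $d(x,z_X)\to\infty$ with the second coordinate essentially fixed, resp.\ moving only in the $Y$-factor near a point with large $\Delta$). The one genuine improvement is your argument for $\Psi$: integrating the pointwise inequality $\rho_\eps\le\rho_{\eps'}$ along curves gives the global bound with constant $1$ directly, whereas the paper passes through the subWhitney comparison of Theorem~\ref{thm-subWhitney-balls} and only obtains a constant $C'$.
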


\begin{proof}
We first consider $\Phi$.
  Since $X\times_\eps Y$ is geodesic,
it suffices to show that $\Phi$ is locally $(C'\eps'/\eps)$-Lipschitz
with $C'$ independent of the locality.
As in the proof of Proposition~\ref{prop-Lip-proj}, for
$(x,y)\in X\times Y$ and $C_1, C_2$ from
Theorem~\ref{thm-subWhitney-balls}, let
\[
r=\frac{C_1\min\{d_\eps(x),d_{\eps'}(x),d_\eps(y),d_{\eps'}(y)\}}{2C_2}, 
\quad x'\in B_\eps(x,r)
\quad \text{and} \quad y'\in B_\eps(y,r).
\]
Theorem~\ref{thm-subWhitney-balls} then gives
\begin{equation}  \label{eq-d-eps-rho}
d_\eps(x,x') \simeq \rho_\eps(x) d(x,x') \quad \text{and} \quad
d_\eps(y,y') \simeq \rho_\eps(y) d(y,y').
\end{equation}
Let $\dt_\eps$, $\dt_{\eps'}$, $k_\eps$ and $k_{\eps'}$ denote 
the product metrics as in \eqref{eq-dt} and
the quasihyperbolic metrics on $X_\eps\times Y_\eps$
and $X_{\eps'}\times Y_{\eps'}$ respectively.
As in \eqref{eq-keps}, we conclude that
\[
k_\eps((x,y),(x',y')) \simeq
\frac{d_\eps(x,x')+d_\eps(y,y')}{\min\{d_\eps(x),d_\eps(y)\}}.
\]
Without loss of generality we assume that $\rho_\eps(x)\le \rho_\eps(y)$, and then
using 
Lemma~\ref{lem:dist-to-eps-bdry},
\[
d_\eps(x) \simeq \frac{\rho_\eps(x)}{\eps} \le \frac{\rho_\eps(y)}{\eps}
\simeq d_\eps(y),
\]
in which case we also have that
\[
d_{\eps'}(x) \simeq \frac{\rho_{\eps'}(x)}{\eps'} \le \frac{\rho_{\eps'}(y)}{\eps'}
\simeq d_{\eps'}(y).
\]
Therefore, using 
\eqref{eq-d-eps-rho},
\begin{equation} \label{eq-k-with-rho}
k_\eps((x,y),(x',y')) \simeq \frac{d_\eps(x,x')+d_\eps(y,y')}{d_\eps(x)}
\simeq  \eps \biggl( d(x,x')+ \frac{\rho_\eps(y)}{\rho_\eps(x)} d(y,y') \biggr),
\end{equation}
with a similar statement holding true for $\eps'$.
Since
\[ 
\frac{\rho_{\eps'}(y)}{\rho_{\eps'}(x)}
= \biggl( \frac{\rho_\eps(y)}{\rho_\eps(x)} \biggr)^{\eps'/\eps}
\le \frac{\rho_\eps(y)}{\rho_\eps(x)},
\] 
we conclude from \eqref{eq-k-with-rho} that
\[
k_{\eps'}((x,y),(x',y'))
\simle \frac{\eps'}{\eps} k_\eps((x,y),(x',y')),
\]
which proves the Lipschitz continuity of $\Phi$.

We now compare the product uniform domains $X_\eps\times Y_\eps$ and
$X_{\eps^\prime}\times Y_{\eps^\prime}$.
With $(x,y), (x',y') \in X\times Y$
as in the first part of the proof, we have 
by \eqref{eq-d-eps-rho} and the assumption $0<\eps'<\eps$
that
\begin{align*}
\dt_\eps((x,y),(x',y')) &=d_\eps(x,x')+d_\eps(y,y') \\
&\simeq \rho_\eps(x)\, d(x,x')+ \rho_\eps(y)\, d(y,y')\\
&\le \rho_{\eps'}(x)\, d(x,x')+ \rho_{\eps'}(y)\, d(y,y') \\
&\simeq \dt_{\eps'}((x,y),(x',y')),
\end{align*}
which proves the Lipschitz continuity of $\Psi$.
On the other hand, choosing $y=y'=z_Y$, with $\rho_\eps(z_Y)=1$, gives
\[
\frac{\dt_{\eps'}((x,z_Y),(x',z_Y))}{\dt_\eps((x,z_Y),(x',z_Y))}
\simeq \frac{\rho_{\eps'}(x)}{\rho_\eps(x)}
=\rho_{\eps}(x)^{-1+\eps'/\eps}.
\]
Since $\eps' <\eps$, letting $d(x,z_X)\to\infty$ and so $\rho_\eps(x)\to 0$
shows that $\Psi^{-1}$ is \emph{not} Lipschitz. 

To show that $\Phi^{-1}$ is \emph{not} Lipschitz, let $x_j\in X$ be
such that $\rho_\eps(x_j)\to0$ (and equivalently,
$\rho_{\eps'}(x_j)\to0$) as $j\to\infty$.
With 
$C(\de)$ as in \eqref{eq-BHK-dist} and $C_1,C_2$
as in Theorem~\ref{thm-subWhitney-balls}, for $j=1,2,\ldots$ 
we choose $y_j\in Y$ such that
\[
d(z_Y,y_j)= \frac{C_1 d_\eps(x_j)}{4 C_2 C(\de)}.
\]
This is possible since $Y$ is geodesic.
Then, for sufficiently large $j$, we have
\(
\eps d(z_Y,y_j) \le 1
\)
and hence by \eqref{eq-BHK-dist},
\[
d_\eps(z_Y,y_j) \le C(\de) d(z_Y,y_j) = \frac{C_1d_\eps(x_j)}{4C_2}.
\]
Since also $\rho_\eps(x_j) \le 1 = \rho_\eps(z_Y)$,
we thus conclude from \eqref{eq-k-with-rho},
with the choice $x=x'=x_j$, $y=z_Y$ and $y'=y_j$, that
\[
k_\eps((x_j,z_Y),(x_j,y_j)) \simeq \frac{\eps d(z_Y,y_j)}{\rho_\eps(x_j)},
\]
with a similar statement holding also for $\eps'$.
This shows that
\[
\frac{k_\eps((x_j,z_Y),(x_j,y_j))}{k_{\eps'}((x_j,z_Y),(x_j,y_j))}
\simeq \frac{\eps \rho_{\eps'}(x_j)}{\eps'\rho_{\eps}(x_j)}
= \frac{\eps}{\eps'} \rho_{\eps}(x_j)^{-1+\eps'/\eps}  \to \infty,
\quad \text{as }j\to\infty.
\]
i.e.\ $\Phi^{-1}$ is not Lipschitz.
\end{proof}

\begin{remark}
If $X=Y=\R$ then, according to Example~\ref{ex-R-times-R}, all the
indirect products $\R \times_\eps \R$ are mutually biLipschitz
equivalent.
However, Proposition~\ref{prop-canonical-Lip} shows that this
equivalence cannot be achieved by the canonical identity map $\Phi$.
\end{remark}

By Theorem~1.1 in Bonk--Heinonen--Koskela~\cite{BHK-Unif},
$\Phi$ is biLipschitz if and only if
$\Psi$ is a quasisimilarity.
Note that $X_\eps$ and $X_{\eps^\prime}$ are quasisymmetrically
equivalent by~\cite{BHK-Unif},
and so are $Y_\eps$ and $Y_{\eps^\prime}$.
On the other hand, products of quasisymmetric maps need not be
quasisymmetric, as exhibited by the Rickman's rug
$([0,1],d_{\Euc})\times([0,1], d_{\Euc}^\alpha)$ for $0<\alpha<1$,
see Bishop--Tyson~\cite[Remark~1, Section~5]{BT} 
and DiMarco~\cite[Section~1]{DiM}.
This seems to happen whenever one of the component spaces 
has dimension~$1$ and the
other has dimension larger than~$1$.

\begin{example} \label{ex:non-qs}
Let $X$ be the unit disk in $\R^2$, equipped with the Poincar\'e
metric $k$, making it a Gromov hyperbolic space.
Let $Y=(-1,1)$ be equipped with the quasihyperbolic metric
(and so it is isometric to $\R$,
see Examples~\ref{ex:R-to-I} and~\ref{ex:I-to-R}).
For both $X$ and $Y$ we can choose $\eps=1$, resulting in
$X_1$ being the Euclidean unit disk and $Y_1$ being
the Euclidean interval $(-1,1)$.
Thus $X_1\times Y_1$ is a solid 3-dimensional Euclidean
cylinder, with boundary made up of $\Sphere^1\times[-1,1]$
  together with two copies of the disk.

Choosing
$0<\eps<1$, we instead obtain $X_\eps$ and $Y_\eps$, with $Y_\eps$
isometric to the Euclidean interval
$(-1/\eps,1/\eps)$, see Example~\ref{ex:R-to-I}.
The boundary of $X_\eps\times Y_\eps$ is made up of two copies of
$X_\eps$ together with $Z\times[-1/\eps,1/\eps]$, where $Z$ is the
$\eps$-snowflaking of $\Sphere^1$, which results
in $Z$ being biLipschitz equivalent to a generalized
von Koch snowflake loop.

If $X\times_\eps Y$ were biLipschitz equivalent to
$X\times_1 Y$, then 
$Z\times[-1/\eps,1/\eps]$
would be quasisymmetrically equivalent to a $2$-dimensional region in
$\partial (X_1\times Y_1)$, which is impossible as pointed out
before this example.
\end{example}

\section{Newtonian spaces and \texorpdfstring{\p}{p}-harmonic functions}
\label{sect-N1p-p-harm}

\emph{We assume in this section that $1 \le  p<\infty$ 
and that $Y=(Y,d,\nu)$ is a metric space equipped with a
complete  Borel  measure $\nu$ 
 such that $0<\nu(B)<\infty$ for all balls $B \subset Y$.}

\medskip

For proofs of the facts stated in this section 
we refer the reader to Bj\"orn--Bj\"orn~\cite{BBbook} and
Heinonen--Koskela--Shanmugalingam--Tyson~\cite{HKST}.

Following Shanmugalingam~\cite{Sh-rev}, 
we define a version of Sobolev spaces on $Y$. 

\begin{deff} \label{deff-Np}
For a measurable function $u:Y\to [-\infty,\infty]$, let 
\[
        \|u\|_{\Np(Y)} = \biggl( \int_Y |u|^p \, d\nu 
                + \inf_g  \int_Y g^p \, d\nu \biggr)^{1/p},
\]
where the infimum is taken over all  upper gradients $g$ of $u$.
The \emph{Newtonian space} on $Y$ is 
\[
        \Np (Y) = \{u: \|u\|_{\Np(Y)} <\infty \}.
\]
\end{deff}

In this paper we assume that functions in $\Np(Y)$
are defined everywhere (with values in $[-\infty,\infty]$),
not just up to an equivalence class in the corresponding function space.
This is important in Definition~\ref{deff-ug},
to make sense of $g$ being an upper gradient of $u$.
The space $\Np(Y)/{\sim}$, where  $u \sim v$ if and only if $\|u-v\|_{\Np(Y)}=0$,
is a Banach space and a lattice.
For a measurable set $E\subset Y$, the Newtonian space $\Np(E)$ is defined by
considering $(E,d|_E,\nu|_E)$ as a metric space in its own right.
We say  that $f \in \Nploc(\Om)$, where $\Om$ is an open subset of $X$, if
for every $x \in \Om$ there exists $r_x>0$ such that 
$B(x,r_x)\subset\Om$ and $f \in \Np(B(x,r_x))$.
The space $\Lploc(\Om)$ is defined similarly.

\begin{deff}
The (Sobolev) \emph{capacity} of a set $E\subset Y$  is the number 
\[
\Cp(E):=\CpY(E):=\inf_u \|u\|_{\Np(Y)}^p,
\]
where the infimum is taken over all $u\in \Np (Y) $ such that $u=1$ on $E$.
\end{deff}

A property is said to hold \emph{quasieverywhere}
(q.e.)\ if the set of all points 
at which the property
fails has $\Cp$-capacity zero. 
The capacity is the correct gauge 
for distinguishing between two Newtonian functions. 
If $u \in \Np(Y)$, then $u \sim v$ if and only if $u=v$ q.e.
Moreover, if $u,v \in \Nploc(Y)$ and $u= v$ a.e., then $u=v$ q.e.

We will also need the variational capacity.

\begin{deff}
Let $\Om\subset Y$ be open.
Then 
\[
  \Np_0(\Om):=\{u|_\Om: u \in \Np(Y) \text{ and } u=0 \text{ on } Y \setm \Om\}.
\]
The \emph{variational capacity} of $E\subset \Om$ with respect to $\Om$ is
\[
\cp(E,\Om) := \cpY(E,\Om):= \inf_u\int_{\Om} g_u^p\, d\nu,
\]
where the infimum is taken over all $u \in \Np_0(\Om)$ such that $u=1$ on $E$.
\end{deff}

The following lemma provides us with a sufficient condition
for when a set has positive capacity, in terms
of Hausdorff measures.
It is similar to Proposition~4.3 in Lehrb\"ack~\cite{Lehr}, but 
the dimension condition for $s$ is weaker here and 
is only required for $x\in K$.
For the reader's convenience, we provide a complete proof.
We will use Lemma~\ref{lem-hausdim-cap}
to deduce Proposition~\ref{prop-Hausdorff-Xeps}.

\begin{lem}  \label{lem-hausdim-cap}
  Let $(Y,d,\nu)$ be a complete metric space equipped with a
  globally doubling measure
$\nu$ supporting a global \p-Poincar\'e inequality.
  Let $E\subset Y$ be a Borel set of
positive $\kappa$-dimensional Hausdorff measure
and assume that for some $C,s,r_0>0$, 
\begin{equation}  \label{eq-assume-s}
  \nu(B(x,r)) \ge C r^s
  \quad \text{for all $x\in E$ and all $0<r\le r_0$}, 
\end{equation}
Then $\CpY(E)>0$  whenever $p>s-\kappa$.
\end{lem}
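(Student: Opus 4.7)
The plan is to prove the contrapositive: assume $\CpY(E)=0$ and derive $\mathcal{H}^\kappa(E)=0$, contradicting the hypothesis. By inner regularity of $\mathcal{H}^\kappa$ we may take $E$ closed and bounded, so Frostman's lemma (in general metric spaces) produces a nonzero finite Borel measure $\sigma$ supported on $E$ with $\sigma(B(y,r))\le r^\kappa$ for every $y\in Y$ and $r>0$. The goal reduces to showing $\sigma(E)^{1/p}\le C\|u\|_{\Np(Y)}$ for every admissible test function $u\in\Np(Y)$ with $u=1$ on $E$; since $\CpY(E)=0$ allows $\|u\|_{\Np(Y)}$ to be arbitrarily small, this forces $\sigma(E)=0$ and contradicts $\sigma(E)>0$.

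Take such $u$, with $0\le u\le 1$ (after truncation) and upper gradient $g$. A standard telescoping argument along the dyadic chain $B_k=B(x,2^{-k}r_0)$, using that $\nu$ is globally doubling and supports the global \p-Poincar\'e inequality, gives for every Lebesgue point $x\in Y$ of $u$
\[
|u(x)-u_{B_0,\nu}|\le C\sum_{k=0}^\infty 2^{-k}r_0\,T_k(x),
\qquad T_k(x):=\biggl(\vint_{\la B_k}g^p\,d\nu\biggr)^{1/p},
\]
where $\la$ is the Poincar\'e dilation. Using $u(x)=1$ on $E$, for $\sigma$-a.e.\ $x\in E$ this rearranges to
\[
1\le u_{B(x,r_0),\nu}+C\sum_{k=0}^\infty 2^{-k}r_0\,T_k(x).
\]
Integrating against $\sigma$, applying H\"older with exponents $p,p'$ to each term on the right, and then Fubini's theorem reduces everything to the kernel estimate
\[
\int_E\frac{\chi_{B(y,r)}(x)}{\nu(B(x,r))}\,d\sigma(x)\le\frac{\sigma(B(y,r))}{Cr^s}\le C'r^{\kappa-s},\qquad r\le r_0,
\]
which combines the density hypothesis $\nu(B(x,r))\ge Cr^s$ for $x\in E$ with the Frostman bound. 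Applied at $r=\la 2^{-k}r_0$ (the finitely many large-$k$ terms contribute only bounded constants), this yields
\[
\int_E T_k\,d\sigma\le C\sigma(E)^{1/p'}(2^{-k}r_0)^{(\kappa-s)/p}\|g\|_{L^p(Y)},
\]
together with an analogous bound for $\int_E u_{B(x,r_0),\nu}\,d\sigma$ carrying $\|u\|_{L^p(Y)}$. Summing over $k$,
\[
\sigma(E)\le C\sigma(E)^{1/p'}\|u\|_{\Np(Y)}\sum_{k=0}^\infty 2^{-k(\kappa-s+p)/p}.
\]
The geometric series converges precisely because $\kappa-s+p>0$, i.e.\ $p>s-\kappa$, which is exactly where the exponent hypothesis enters. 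Rearranging gives $\sigma(E)^{1/p}\le C\|u\|_{\Np(Y)}$, as required.

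The main obstacle is a Lebesgue-point technicality: the telescoping identity holds only outside a set $N$ with $\CpY(N)=0$, whereas $\sigma$ is a Frostman measure not a priori controlled by $\CpY$. This is circumvented by working with a sequence $u_j\in\Np(Y)$ satisfying $\|u_j\|_{\Np(Y)}\to 0$ and with their quasi-continuous representatives: the union of the associated exceptional sets is a single $\CpY$-null Borel set $N_\infty$, and the argument above applied to $E\setminus N_\infty$ yields $\sigma(E\setminus N_\infty)=0$; a bootstrap observing $\CpY(E\cap N_\infty)\le\CpY(N_\infty)=0$, together with the fact that $\sigma$ restricted to $E\cap N_\infty$ still obeys the Frostman bound, then gives $\sigma(E\cap N_\infty)=0$ by the same estimate, completing the argument.
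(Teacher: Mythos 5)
Your overall strategy is sound and genuinely different from the paper's: you build a Frostman measure $\sigma$ on a compact subset of $E$ and prove a capacitary strong-type estimate $\sigma(E)^{1/p}\le C\|u\|_{\Np(Y)}$ by integrating the telescoping Poincar\'e estimate against $\sigma$, using Fubini and the kernel bound that combines the density hypothesis with the Frostman growth. The exponent bookkeeping is correct and the series converges precisely when $p+\kappa-s>0$. The paper instead avoids any auxiliary measure: at each point of a compact $K\subset E$ it selects one ``bad'' ball on which $\int_{\la B_x}g_k^p\,d\nu\simge r_x^\kappa$, and then a $5$-covering argument bounds the $\kappa$-Hausdorff content of $K$ by $\int_B g_k^p\,d\nu\to0$, giving the contradiction directly.

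There is, however, a genuine gap in your last paragraph. You rightly flag that the telescoping identity holds only off a set $N_\infty$ with $\CpY(N_\infty)=0$, and that $\sigma$ need not charge $\nu$-null or capacity-null sets in any a priori controlled way. But the proposed bootstrap for $\sigma(E\cap N_\infty)=0$ is circular: applying ``the same estimate'' to $E\cap N_\infty$ again requires the telescoping inequality at $\sigma$-a.e.\ point of $E\cap N_\infty$, and these are exactly the points where it may fail. The assertion that capacity-null sets are $\sigma$-null for measures with growth $r^\kappa$, $\kappa>s-p$, is essentially equivalent to the strong-type estimate you are trying to establish, so it cannot be invoked here. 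The standard repair --- and the one the paper uses --- is to compute the capacity with \emph{continuous} test functions: by Theorem~6.19 in \cite{BBbook}, in a complete doubling \p-Poincar\'e space one may take $u_j\in\Lip_0(B)$ with $u_j=1$ on $K$ and $0\le u_j\le1$, whence $u_{j,B(x,r)}\to u_j(x)=1$ for \emph{every} $x\in K$ by continuity and no exceptional set arises; your integration argument then closes without the bootstrap. You should also supply a reference for Frostman's lemma on compact subsets of general (proper, separable) metric spaces --- properness here follows from completeness plus the global doubling of $\nu$ --- since the classical statement is usually given in $\R^n$.
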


Note that if \eqref{eq-assume-s} holds for some $r_0$, then it holds
  with $r_0=1$, although $C$ may change.

\begin{proof}
By the regularity of the Hausdorff measure,
there is a compact set $K \subset E$ with
positive $\kappa$-dimensional Hausdorff measure.
Assume that $\CpY(K)=0$.
Then also the variational capacity $\cpY(K,B)=0$ for every ball $B\supset K$.
By splitting $K$ into finitely many pieces if necessary, and shrinking
$B$, we can assume that $\nu(2B\setm B)>0$.

As $\cpY(K,B)=0$, it follows from
\cite[Theorem~6.19]{BBbook} that there are
\[
u_k\in\Lip_0(B):=\{\phi \in \Lip(Y): \phi =0 \text{ in } Y \setm B\}
\]
with upper gradients $g_k$ such that $u_k=1$ on $K$,
$0 \le u_k \le 1$ on $Y$ and
\[
\int_Y g_k^p\,d\nu \to 0 \quad \text{as }k\to\infty.
\]

We can assume that $r_0\le \dist(K,Y\setm B)$ and set
$r_j=2^{-j}r_0$, $j=0,1,\ldots$\,.
For a fixed $x\in K$, consider the balls $B_j=B(x,r_j)$.
A standard telescoping argument, using the
doubling property of
$\nu$ together with the \p-Poincar\'e inequality,
then shows that for a fixed $k$ and $u:=u_k$,
\begin{align}  
|u(x) - u_{B_0}|
&\le \sum_{j=0}^\infty |u_{B_{j+1}}-u_{B_j}|
\simle \sum_{j=0}^\infty \vint_{B_j} |u-u_{B_j}| \,d\nu \nonumber \\
&\simle \sum_{j=0}^\infty \frac{r_j}{\nu(B_j)^{1/p}} 
\biggl(\int_{\la B_j} g_k^p\,d\nu \biggr)^{1/p}.
\label{eq-telescoping}
\end{align}
Because $u$ vanishes outside $B$ and $\nu(2B\setm B)>0$, we see
that
\[
u_{2B}:=\vint_{2B} u\,d\nu \le \frac{\nu(B)}{\nu(2B)}=:1-2\theta<1.
\]
Moreover,
\[
|u_{2B} - u_{B_0}|\simle \frac{r_B}{\nu(2\la B)^{1/p}} 
\biggl(\int_{2\la B} g_k^p\,d\nu \biggr)^{1/p} \to 0, \quad \text{as } k\to\infty,
\]
where $r_B$ stands for the radius of $B$. Since $u(x)=1$,
we conclude that for sufficiently large $k$, independently of $x\in K$,
\[
|u(x) - u_{B_0}| \ge |u(x)-u_{2B}| - |u_{2B}-u_{B_0}|
\ge 2\theta -|u_{2B}-u_{B_0}| \ge \theta
\simeq \sum_{j=0}^\infty r_j^{\tau},
\]
where $\tau=1-(s-\kappa)/p>0$.
Inserting this into \eqref{eq-telescoping} and comparing the sums, we
see that for each $x\in K$ there exists a ball $B_x=B_{j(x)}$ centered
at $x$ and with radius $r_x= r_{j(x)}$ such that
\begin{equation}   \label{eq-choose-Bx}
\int_{\la B_x} g_k^p\,d\nu \simge \frac{\nu(B_x)}{r_x^{p(1-\tau)}}
\simge r_x^{\kappa},
\end{equation}
because of the assumption~\eqref{eq-assume-s}.
Using the 5-covering lemma, we can out of the balls $\la B_x$
choose a
countable pairwise disjoint subcollection $\la\widehat{B}_j$,
$j=1,2,\ldots$\,, with radii $\hat{r}_j$, so that
$K\subset \bigcup_{j=1}^\infty 5\la\widehat{B}_j$.
Hence using \eqref{eq-choose-Bx} we obtain
\[
\sum_{j=1}^\infty \hat{r}_j^{\kappa}
\simle \sum_{j=1}^\infty \int_{\la \widehat{B}_j} g_k^p\,d\nu
\simle \int_{B} g_k^p\,d\nu \to0,
\quad \text{as } k\to\infty,
\]
showing that the $\kappa$-dimensional Hausdorff content (and thus
also the corresponding measure) is zero.
This causes a contradiction, which concludes the proof.
\end{proof}

\begin{deff} \label{deff-pharm}
Let $\Om \subset Y$ be open.
Then $u \in \Nploc(\Om)$ is \emph{\p-harmonic} in $\Om$ if
it is continuous and
\begin{equation}   \label{eq-def-p-harm}
\int_{\phi \ne 0} g_u^p \, d\nu
\le \int_{\phi \ne 0} g_{u+\phi}^p \, d\nu
\quad \text{for all } \phi \in \Np_0(\Om).
\end{equation}
\end{deff}

This is one of several equivalent definitions in the literature, 
see Bj\"orn~\cite[Proposition~3.2 and Remark~3.3]{ABkellogg}
(or \cite[Proposition~7.9 and Remark~7.10]{BBbook}).
In particular,
multiplying $\phi$ by suitable cut-off functions shows that
the inequality in \eqref{eq-def-p-harm} can
equivalently be required for all $\phi \in \Np_0(\Om)$
with bounded support.

If $\nu$ is locally doubling and supports a local \p-Poincar\'e
  inequality then every $u\in\Np\loc(\Om)$ satisfying
  \eqref{eq-def-p-harm} can be modified on a set of zero capacity to
  become continuous, and thus \p-harmonic, 
see Kinnunen--Shanmugalingam~\cite[Theorem~5.2]{KiSh1}.
Moreover, it follows from  
\cite[Corollary~6.4]{KiSh1} that
\p-harmonic functions obey the strong maximum
principle, i.e.\ if $\Om$ is connected, then
they cannot attain their maximum in $\Om$ without
being constant.

\begin{deff}\label{def:ann-qcvx}
A metric space $Y$ is 
\emph{locally annularly quasiconvex} around  a point $x_0$
if there exist $\La\ge2$ and $r_0>0$ such that for every $0<r\le r_0$, 
each pair of points $x,y  \in B(x_0,2r)\setm B(x_0,r)$
can be connected within  
$B(x_0,\La r)\setm B(x_0,r/\La)$ 
 by a curve of length at most $\La d(x,y)$.
\end{deff}

\begin{lem}  \label{lem-est-osc-energy}
  Let $Y$
  be a complete metric space equipped with a
  globally doubling measure
$\nu$ supporting a global \p-Poincar\'e inequality.
  Assume that a connected
  open set $\Om\subset Y$ is locally annularly quasiconvex around 
  $x_0\in\Om$, with parameters $\La$ and
  $r_0 < \dist(x_0,Y \setm \Om)/2\La$. 
Let $u$ be a \p-harmonic function in $\Om\setm\{x_0\}$. 
Then for every $0<r\le r_0$,
\begin{equation}   \label{eq-est-osc-energy}
\osc_{B(x_0,2r)\setm \{x_0\}} u 
 \le C \biggl( \sum_{k=0}^\infty \biggl(\frac{(2^{-k}r)^p}{\nu(B(x_0,2^{-k}r))}
   \biggr)^{1/(p-1)} \biggr)^{1-1/p} 
  \biggl( \int_{B(x_0,2\La r)} g_{u}^p\, d\nu \biggr)^{1/p},
\end{equation}
where $C$ depends only on $\La$ and
  the global doubling and Poincar\'e constants.
\end{lem}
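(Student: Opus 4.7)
The plan is to combine a dyadic decomposition of $B(x_0, 2r) \setm \{x_0\}$ into annular shells with the local boundedness of $p$-harmonic functions and a final H\"older summation. Write $r_k = 2^{-k}r$ and $A_k = B(x_0, 2r_k)\setm B(x_0, r_k/2)$. Consecutive shells overlap in $B(x_0, r_k)\setm B(x_0, r_k/2)$, which is nonempty by the annular quasiconvexity at scale $r_k/2$, and $\bigcup_{k\ge0} A_k = B(x_0, 2r)\setm\{x_0\}$. Since $u$ is continuous on $\Om\setm\{x_0\}$, a standard chain argument picking representatives $x_k \in A_k \cap A_{k+1}$ yields
\[
  \osc_{B(x_0,2r)\setm\{x_0\}} u \le \sum_{k=0}^\infty \osc_{A_k} u.
\]

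Next, I aim to establish the shell-level bound
\[
  \osc_{A_k} u \le C r_k \biggl(\vint_{\tilde A_k} g_u^p\, d\nu\biggr)^{1/p},
\]
where $\tilde A_k := B(x_0, 2\La r_k) \setm B(x_0, r_k/(2\La))$. The hypothesis $r_0 < \dist(x_0, Y\setm\Om)/(2\La)$ ensures $\tilde A_k \subset \Om\setm\{x_0\}$, so $u$ is $p$-harmonic on $\tilde A_k$. Cover $A_k$ by a uniformly bounded (doubling-controlled) collection of balls $B_k^j = B(z_k^j, r_k/8)$ with $d(x_0,z_k^j) \simeq r_k$, chosen so that $2\La B_k^j \subset \tilde A_k$. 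Local boundedness for $p$-harmonic minimizers applied to $u - u_{2B_k^j,\nu}$, combined with the $(p,p)$-Poincar\'e inequality (a doubling self-improvement of the assumed $(1,p)$-inequality), yields
\[
  \osc_{B_k^j} u \le C \biggl(\vint_{2B_k^j}|u-u_{2B_k^j,\nu}|^p\, d\nu\biggr)^{1/p}
        \le C r_k \biggl(\vint_{2\La B_k^j} g_u^p\, d\nu\biggr)^{1/p}.
\]
Chaining the finitely many balls $B_k^j$ within $\tilde A_k$ via the annular quasiconvexity then upgrades the per-ball oscillation to the entire shell; doubling also gives $\nu(B_k^j) \simeq \nu(B(x_0, r_k)) \simeq \nu(\tilde A_k)$, since $B(z_k^j, 4r_k) \supset B(x_0, r_k)$.

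Summing over $k$ produces
\[
  \osc_{B(x_0,2r)\setm\{x_0\}} u \le C \sum_{k=0}^\infty \frac{r_k}{\nu(\tilde A_k)^{1/p}} \biggl(\int_{\tilde A_k} g_u^p\, d\nu\biggr)^{1/p},
\]
and applying H\"older's inequality with conjugate exponents $p/(p-1)$ and $p$, together with the bounded overlap of the $\tilde A_k$ (each point of $B(x_0, 2\La r)$ lies in at most a uniformly bounded number of them), yields \eqref{eq-est-osc-energy} after replacing $\nu(\tilde A_k)$ by $\nu(B(x_0, r_k))$ and using $(p-1)/p = 1-1/p$. The main obstacle is the shell-level estimate: the oscillation bound on a single ball is standard, but passing to the full annulus $A_k$ requires using annular quasiconvexity quantitatively, both to chain balls around $x_0$ and to verify $2\La B_k^j \subset \tilde A_k$, while the comparability $\nu(\tilde A_k) \simeq \nu(B(x_0, r_k))$ has to be justified via the doubling hull of a ball centered in $\tilde A_k$ at distance $\simeq r_k$ from $x_0$.
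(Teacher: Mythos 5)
Your proposal follows essentially the same route as the paper's proof: dyadic annuli around $x_0$, a per-annulus oscillation bound obtained by chaining small balls inside the enlarged annulus via the annular quasiconvexity and applying the ball-level oscillation estimate for \p-harmonic functions (which the paper quotes as Lemma~4.1 of \cite{BBSliouville} rather than rederiving it from local boundedness plus the $(p,p)$-Poincar\'e inequality, as you do), followed by H\"older's inequality and the bounded overlap of the enlarged annuli. The only quantitative slip is the ball radius $r_k/8$: since $\La\ge2$ the dilated ball $2\La B_k^j$ has radius at least $r_k/2$ and may contain $x_0$ (and need not stay in $\tilde A_k$), so the radius must be taken of order $r_k/\la\La$ --- the paper uses $\rho/4\la\La$, where $\la$ is the Poincar\'e dilation, which is the constant that actually governs the enlargement in the ball-level estimate.
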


Here $\dist(x_0,\emptyset)$ is considered to be $\infty$.

\begin{remark}
  Under the assumptions of Lemma~\ref{lem-est-osc-energy}
and the additional assumption $r < \frac14 \diam Y$, the sum
in \eqref{eq-est-osc-energy} is, by
e.g.\ \cite[Proposition~6.16]{BBbook},
comparable to
\[
\sum_{k=0}^\infty \cpY(B(x_0,2^{-k-1}r),B(x_0,2^{-k}r))^{1/(1-p)}
\le \cpY(\{x_0\},B(x_0,r))^{1/(1-p)},
\]
where the last inequality follows from Lemma~2.6 in
Heinonen--Kilpel\"ainen--Martio~\cite{HeKiMa} whose proof applies
verbatim also in the metric space setting.
Thus if $\cpY(\{x_0\},B(x_0,r))$ is positive, then
the above sum is finite.
\end{remark}

\begin{proof}[Proof of Lemma~\ref{lem-est-osc-energy}]
We can assume that $r_0 < \frac12 \diam \Om$.
  For $0<\rho\le r_0$, find $x,y \in B(x_0,2\rho)\setm B(x_0,\rho)$ so that
\begin{equation}   \label{eq-choose-x,y-osc}
|u(x)-u(y)| \ge \tfrac{1}{2} \osc_{B(x_0,2\rho)\setm B(x_0,\rho)} u.
\end{equation}
Let $\ga$ be a curve in the annulus $B(x_0,\La \rho)\setm B(x_0,\rho/\La)$
provided by the annular quasiconvexity.
Along this curve, we can find a chain of balls $\{B_j\}_{j=1}^{N}$
of radius $\rho/4\la\La$, such that
$N$ is bounded by a constant depending only on $\La$ and
the dilation $\la$ from the \p-Poincar\'e inequality, and
\begin{alignat*}{2}
  &2\la B_j\subset B(x_0,2\La \rho) \setm B(x_0,\rho /2\La),
  &\quad & \text{for }j=1,\ldots,N, \\
&B_j \cap B_{j+1} \ne \emptyset && \text{for }  j=1,\ldots,N-1.
\end{alignat*}
Using Lemma~4.1 in  Bj\"orn--Bj\"orn--Shanmugalingam~\cite{BBSliouville},
we thus get that
\[ 
   |u(x) -u(y)|  \le \sum_{j=1}^{N} \osc_{B_j} u  
                \simle  \frac{\rho}{4\la\La}
                    \sum_{j=1}^{N} \frac{1}{\nu(B_j)^{1/p}} 
                \biggl( \int_{2\la B_j} g_u^p\, d\nu \biggr)^{1/p}.
\]
Since $\nu$ is globally doubling, we have 
$\nu(B_j)\simeq \nu(B(x_0,\rho))$ and so by \eqref{eq-choose-x,y-osc}
and the uniform bound on $N$,
\[
\osc_{B(x_0,2\rho)\setm B(x_0,\rho)} u
\simle  \frac{\rho}{\nu(B(x_0,\rho))^{1/p}}
    \biggl( \int_{B(x_0,2\La \rho) \setm B(x_0,\rho/2\La)} g_u^p\, d\nu \biggr)^{1/p}.
\]
H\"older's inequality, together with the last
estimate applied to
$\rho=r_k=2^{-k}r$  
then yields
\begin{align*}
\osc_{B(x_0,2r)\setm \{x_0\}} u 
&\le \sum_{k=0}^\infty \osc_{B(x_0,2r_k)\setm B(x_0,r_k)} u \\
&\simle \biggl( \sum_{k=0}^\infty \biggl( 
 \frac{r_k}{\nu(B(x_0,r_k))^{1/p}} \biggr)^{p/(p-1)} \biggr)^{1-1/p} 
  \biggl( \sum_{k=0}^\infty \int_{A_k} 
           g_u^p\, d\nu \biggr)^{1/p},
\end{align*}
where $A_k=B(x_0,2\La r_k) \setm B(x_0,r_k/2\La)$.
These annuli have clearly bounded overlap depending only on $\La$, and so
\eqref{eq-est-osc-energy} follows.
\end{proof}

The following lemma will be used when proving
Theorem~\ref{thm-nonconst-energy-iff-pos-cap}.

\begin{lem}  \label{lem-annular}
Let $(\Om,d)$ be an $A$-uniform space and $a\in\bdy\Om$.
Then $\clOm\setm\{a\}$ is locally annularly quasiconvex around $a$
with $\La=4A$.
\end{lem}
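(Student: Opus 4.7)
The plan is to use $A$-uniform curves in $\Om$ directly, with $\La:=4A\ge 2$ (since $A\ge 1$) and $r_0>0$ arbitrary. First suppose $x,y\in\Om\cap(B(a,2r)\setm B(a,r))$. By $A$-uniformity there is an arc length parametrized curve $\ga:[0,l_\ga]\to\Om$ from $x$ to $y$ satisfying $l_\ga\le A\,d(x,y)\le \La\,d(x,y)$, which is the required length bound, and $d_\Om(\ga(t))\ge\min\{t,l_\ga-t\}/A$. Since $d(x,y)\le d(x,a)+d(a,y)<4r$, we also have $l_\ga<4Ar$, and of course $\ga\subset\Om\subset\clOm\setm\{a\}$.

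Next I verify that $\ga\subset B(a,\La r)\setm B(a,r/\La)$. For the outer bound, any $z=\ga(t)$ satisfies
\[
d(z,a)\le\min\{d(z,x)+d(x,a),\,d(z,y)+d(y,a)\}<\min\{t,l_\ga-t\}+2r\le \tfrac12 l_\ga+2r<2(A+1)r\le 4Ar=\La r,
\]
where the last step uses $A\ge 1$. For the inner bound, if $\min\{t,l_\ga-t\}\ge r/4$, the $A$-uniform property yields
\[
d(z,a)\ge d_\Om(z)\ge\frac{\min\{t,l_\ga-t\}}{A}\ge\frac{r}{4A}=\frac{r}{\La}.
\]
Otherwise $z$ is within arc length $r/4$ of one endpoint, say $x$, so $d(z,x)<r/4$ and $d(z,a)\ge d(x,a)-d(z,x)>r-r/4=\tfrac34 r\ge r/\La$, using $A\ge 1$. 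This settles the case $x,y\in\Om$.

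For the remaining case $x\in\bdy\Om\setm\{a\}$ (and/or $y\in\bdy\Om\setm\{a\}$), I would approximate by sequences $x_n,y_n\in\Om$ converging to $x,y$, apply the preceding argument to obtain $A$-uniform curves $\ga_n$, and pass to a uniform limit $\ga$; the length bound and the two ball inclusions are stable under the limit and transfer to $\ga$. The main obstacle is extracting the convergent subsequence, which requires an Arzel\`a--Ascoli argument and hence properness of $\clOm$. This is automatic in the intended applications (such as Theorem~\ref{thm-nonconst-energy-iff-pos-cap}) via local compactness together with \cite[Proposition~2.20]{BHK-Unif}.
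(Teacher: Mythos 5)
Your treatment of the case $x,y\in\Om$ is correct and coincides with the first half of the paper's own proof. The problem is the boundary case. The Arzel\`a--Ascoli limiting argument you propose requires $\clOm$ to be proper, but that is not a hypothesis of the lemma: the paper explicitly does \emph{not} build local compactness into its definition of a uniform space, and Lemma~\ref{lem-annular} is stated for an arbitrary $A$-uniform space. Observing that properness holds ``in the intended applications'' does not prove the statement as written, so as it stands your argument only establishes a weaker lemma with an added properness hypothesis.

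The missing idea is that no compactness is needed at all: since $\Om$ is $A$-uniform it is $A$-quasiconvex, and hence so is its completion $\clOm$ (join a boundary point to a nearby interior point by concatenating quasiconvex curves between the terms of a rapidly converging sequence from $\Om$). Given distinct $x,y\in\clOm\cap B(a,2r)\setm B(a,r)$, pick $x',y'\in\Om$ with $d(x,x'),\,d(y,y')\le d(x,y)/8A$, join $x'$ to $y'$ by the curve from the interior case, and join $x$ to $x'$ and $y'$ to $y$ by $A$-quasiconvex curves in $\clOm$. The concatenation has length at most $(A+1)d(x,y)\le\La d(x,y)$, and it stays in the annulus because the two connecting arcs have length at most $d(x,y)/8\le r/2$. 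Note that this perturbation only places $x',y'$ in the larger annulus $B(a,2r)\setm B(a,\tfrac12 r)$, which is exactly why the paper proves the interior case for that annulus rather than for $B(a,2r)\setm B(a,r)$; your interior argument needs the same (harmless) adjustment, with the lower bound $\tfrac34 r$ near the endpoints replaced by $\tfrac14 r\ge r/4A$.
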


\begin{proof}
Let $r>0$ and assume that $x,y\in \Om \cap B(a,2r) \setm B(a,\frac12 r)$.
Let $\ga$ be an arc length parametrized $A$-uniform curve joining $x$ to $y$.
Since $l_\ga\le A d(x,y)\le 4Ar$, we have $\ga\subset B(a,(2+2A)r)$.
Now, if $\tfrac14 r\le t\le l_\ga-\tfrac14 r$, then
\[
d(a,\ga(t))\ge d_\Om(\ga(t)) \ge \frac{1}{A} \min\{t,l_\ga-t\}
\ge \frac{r}{4A}.
\]
Similarly, if 
$d(x,\ga(t))<\tfrac14 r$  or $d(y,\ga(t))<\tfrac14 r$ then
$d(a,\ga(t)) > \tfrac14 r$.
In both cases it follows that $\ga \cap B(a,\tfrac14 r)=\emptyset$,
and the lemma is proved under the assumption that  $x,y\in\Om$.

Finally, if $x,y\in \clOm \cap B(a,2r) \setm B(a,r)$
with $x \ne y$, then we find 
\[
x',y'\in \Om \cap (B(a,2r) \setm B(a,\tfrac12 r))
\quad \text{with} \quad 
d(x,x')\le \frac{d(x,y)}{8A} 
\text{ and } 
d(y,y')\le \frac{d(x,y)}{8A}.
\]
By the definition of uniform space, $\Om$ is $A$-quasiconvex, and hence
so is $\clOm$.
Join $x'$ to $y'$ by a curve $\ga$ as in the first part of the proof.
Concatenating $\ga$ with the $A$-quasiconvex curves, joining $x$ to $x'$
and $y'$ to $y$, gives a suitable curve $\gat$ with length
\[
l_{\gat} \le A d(x',y') + 2\cdot \tfrac{1}{8} d(x,y)
\le (A+1)d(x,y),
\]
 which concludes the proof.
\end{proof}

\section{\texorpdfstring{\p}{p}-harmonic functions on
\texorpdfstring{$X$}{X} and \texorpdfstring{$X_\eps$}{Xe}}
\label{p-harm-appl}

\emph{In this  section, we assume that $X$ is a
locally compact roughly starlike Gromov $\de$-hyperbolic space
equipped with a complete  Borel  measure $\mu$ 
 such that $0<\mu(B)<\infty$ for all balls $B \subset X$.
We also fix a point $z_0 \in X$, let $M$ 
be the constant in the roughly starlike condition with respect to
$z_0$, and
assume that 
\[ 
0 < \eps \le \eps_0(\de),
\quad 
\be > 0 
\quad \text{and} \quad
1 \le p<\infty.
\]
Finally, we let $X_\eps$ be the uniformization of $X$ with
  uniformization center $z_0$.
When discussing the uniformization $X_\eps$,
and in particular $\CpXeps$, it will always be
assumed to be equipped with $\mu_\be$ for the $\be$ given above.
}

\medskip

In this section we shall see that with suitable choices of $p$, $\eps$ and
$\be$ satisfying $\be=p \eps$, each \p-harmonic function on the unbounded Gromov hyperbolic
space $(X,d,\mu)$ 
transforms into a \p-harmonic function on the bounded space
$(X_\eps,d_\eps,\mu_\be)$. This fact will make it possible 
to characterize, under uniformly local assumptions,
when there are no nonconstant \p-harmonic functions
with finite \p-energy on $X$, i.e.\ when the
\emph{finite-energy Liouville theorem} holds.
A function $u$ has \emph{finite \p-energy} with
respect to $(X,d,\mu)$  if $\int_X g_u^p \,d\mu<\infty$.

In the setting of complete metric spaces, equipped with a  globally
doubling measure supporting a global \p-Poincar\'e inequality, it was shown
in Bj\"orn--Bj\"orn--Shan\-mu\-ga\-lingam~\cite[Theorem~1.1]{BBSliouville} that
the finite-energy Liouville theorem holds on $X$
whenever
$X$ is either annularly quasiconvex around a point or
\[
\limsup_{r \to \infty} \frac{\mu(B(x_0,r))}{r^{p}} > 0
\quad \text{for some fixed point $x_0$.} 
\]

The focus of this section will be to consider
the finite-energy Liouville theorem for Gromov hyperbolic spaces
under uniformly local assumptions.

\begin{prop}   \label{prop-energy+Sob-spc}
Let $\Om \subset X$ be open and $u: \Om \to [-\infty,\infty]$ be measurable.
Then the following are true\/\textup{:}
\begin{enumerate}
\item \label{a-comp-energy}
With $g_u$ and $g_{u,\eps}$ denoting the minimal \p-weak upper gradients
of $u$ with respect to $(d,\mu)$ and $(d_\eps,\mu_\be)$,
respectively, we have 
\begin{equation}  \label{eq-gu-gesp}
g_{u,\eps}(x)=g_u(x) e^{\eps d(x,z_0)}
\end{equation}
and
\begin{equation}   \label{eq-comp-energy}
\int_\Om g_u(x)^p\,d\mu(x) 
= \int_\Om g_{u,\eps}(x)^p  e^{(\be-p\eps)d(x,z_0)}\,d\mu_\be(x).
\end{equation}
\item \label{a-Nploc}
$\Nploc(\Om,d,\mu)= \Nploc(\Om,d_\eps,\mu_\be)$.
\item \label{a-Np}
If\/ $\Om$ is bounded, then $\Np(\Om,d,\mu)= \Np(\Om,d_\eps,\mu_\be)$, 
as sets and with comparable norms\/
\textup{(}depending only on $\eps$, $\be$, $p$ and $\Om$\textup{)}. 
\end{enumerate}
\end{prop}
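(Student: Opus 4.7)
Part~\ref{a-comp-energy}. The key observation is that $ds_\eps = \rho_\eps\,ds$ along rectifiable curves, so for every nonnegative Borel function $g$ and every rectifiable curve $\ga$ in $X$,
\begin{equation*}
  \int_\ga g\,ds = \int_\ga \frac{g}{\rho_\eps}\,ds_\eps.
\end{equation*}
Hence $g$ is a (genuine) upper gradient of $u$ with respect to $d$ if and only if $g/\rho_\eps = g e^{\eps d(\,\cdot\,,z_0)}$ is an upper gradient with respect to $d_\eps$. To pass to minimal \p-weak upper gradients, I would invoke Koskela--MacManus to obtain actual upper gradients $g_j$ of $u$ in $\Om$ with $g_j \to g_u$ in $\Lploc(\Om,\mu)$. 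On any compact set $K\subset\Om$ the weights $e^{-\eps d(\,\cdot\,,z_0)}$ and $e^{-\be d(\,\cdot\,,z_0)}$ are bounded between positive constants, so $\tilde g_j := g_j e^{\eps d(\,\cdot\,,z_0)} \to g_u e^{\eps d(\,\cdot\,,z_0)}$ in $L^p(K,\mu_\be)$. Being an $L^p$-limit of genuine upper gradients of $u$ with respect to $d_\eps$, the limit $g_u e^{\eps d(\,\cdot\,,z_0)}$ is a \p-weak upper gradient of $u$ with respect to $(d_\eps,\mu_\be)$, so $g_{u,\eps} \le g_u e^{\eps d(\,\cdot\,,z_0)}$ a.e. The symmetric argument, starting from approximating $g_{u,\eps}$ by $d_\eps$-upper gradients $\tilde g_j$ and setting $g_j := \tilde g_j \rho_\eps$, yields the reverse inequality. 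Identity~\eqref{eq-comp-energy} is then immediate from $d\mu = e^{\be d(\,\cdot\,,z_0)}\,d\mu_\be$ and~\eqref{eq-gu-gesp}.

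Part~\ref{a-Nploc}. This reduces to a local statement. For each $x\in\Om$ I would choose a neighborhood $U\subset\Om$ of $x$ whose closure in $X$ is compact. On $U$ both weights $e^{-\eps d(\,\cdot\,,z_0)}$ and $e^{-\be d(\,\cdot\,,z_0)}$ are bounded between positive constants, so the measures $\mu|_U$ and $\mu_\be|_U$ are mutually comparable, and the $L^p(U,\mu)$- and $L^p(U,\mu_\be)$-norms of $u$ are equivalent. Combined with \eqref{eq-gu-gesp} from part~\ref{a-comp-energy}, which makes $\|g_u\|_{L^p(U,\mu)}$ and $\|g_{u,\eps}\|_{L^p(U,\mu_\be)}$ equivalent, membership in $\Np(U,d,\mu)$ is equivalent to membership in $\Np(U,d_\eps,\mu_\be)$.

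Part~\ref{a-Np}. If $\Om$ is $d$-bounded then $d(\,\cdot\,,z_0)$ is bounded on $\Om$ by some $R=R(\Om)$, since $X$ is proper. Consequently both weights are now globally bounded between positive constants on $\Om$, with bounds depending only on $\eps$, $\be$ and $R$. The norm equivalence
\begin{equation*}
  \|u\|_{L^p(\Om,\mu)}\simeq \|u\|_{L^p(\Om,\mu_\be)}
  \quad\text{and}\quad
  \|g_u\|_{L^p(\Om,\mu)}\simeq \|g_{u,\eps}\|_{L^p(\Om,\mu_\be)}
\end{equation*}
then follows from part~\ref{a-comp-energy} together with these global weight bounds, yielding comparability of the two $\Np$-norms.

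The only genuine subtlety is in part~\ref{a-comp-energy}: the two metrics have different exceptional families of zero \p-modulus, so an upper-gradient inequality cannot be transported curve-by-curve from the \p-weak level directly. The Koskela--MacManus approximation by genuine upper gradients bypasses this, since for genuine upper gradients the transformation $g\leftrightarrow g e^{\eps d(\,\cdot\,,z_0)}$ is a pointwise bijection.
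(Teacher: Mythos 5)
Your proposal is correct and takes essentially the same route as the paper: the identity $ds_\eps=\rho_\eps\,ds$ gives the pointwise bijection between genuine upper gradients, the Koskela--MacManus approximation transfers this to minimal \p-weak upper gradients, and the local (resp.\ global, for bounded $\Om$) comparability of the weights yields parts~\ref{a-Nploc} and~\ref{a-Np}. The only cosmetic difference is that the paper deduces~\ref{a-Nploc} from~\ref{a-Np} rather than by direct localization.
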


\begin{remark}
At first glance it would seem that the minimal \p-weak upper gradient $g_{u,\eps}$
of $u$ would also depend on the ambient measure $\mu_\be$, but because of the local
nature of minimal weak upper gradients and by the fact that the weight
$x\mapsto e^{-\beta d(x,z_0)}$ is locally bounded away from both $0$ and
$\infty$, it follows that $g_{u,\eps}$ indeed does not depend on 
the choice of $\be$, see the proof below.
\end{remark}

\begin{proof}[Proof of Proposition~\ref{prop-energy+Sob-spc}]
Clearly, \ref{a-Nploc} follows directly from \ref{a-Np}.
To prove \ref{a-comp-energy} and \ref{a-Np},
we conclude from \eqref{eq-ds_eps} that
$g_\eps(x):=g(x) e^{\eps d(x,z_0)}$ is an upper gradient of $u$ 
with respect to $d_\eps$
if and only if
  $g$ is an upper gradient of $u$ 
with respect to $d$.
Since \p-weak upper gradients can be approximated by upper gradients,
both in the $L^p$-norm and also
pointwise almost everywhere with respect to $\mu$ and (equivalently) $\mu_\beta$,
this identity holds also for \p-weak upper
gradients. 
In particular, \eqref{eq-gu-gesp} and \eqref{eq-comp-energy} hold, 
which proves part \ref{a-comp-energy}.

If $\Om$ is bounded, we also have that $\mu$ and $\mu_\be$ are
comparable on $\Om$, 
which implies that 
\[
\int_\Om |u|^p\,d\mu \simeq \int_\Om |u|^p\,d\mu_\be
\]
with comparison constants depending on $\be $ and $\Om$.
Together with \eqref{eq-comp-energy}, this implies that 
$u \in \Np(\Om,d,\mu)$ if and only if $u \in \Np(\Om,d_\eps,\mu_\be)$,
with comparable norms.
\end{proof}

\begin{remark}
The proof of Proposition~\ref{prop-energy+Sob-spc} also shows that
even if $\Om$ is not bounded  then for $\be\ge p\eps$,
\[
\|u\|_{\Np(\Om,d_\eps,\mu_\be)} \le \|u\|_{\Np(\Om,d,\mu)}
\]
and thus
\(
\Np(\Om,d,\mu) \subset \Np(\Om,d_\eps,\mu_\be).
\)
\end{remark}

\begin{prop}   \label{prop-pharm-d-deps}
Let $\Om \subset X$ be open.
If $p=\be/\eps>1$, 
then a function $u: \Om \to \R$ is \p-harmonic in $\Om$ with
respect to $(d,\mu)$ if and only if it is \p-harmonic in $\Om$ with
respect to $(d_\eps,\mu_\be)$.
Moreover, its \p-energy is the same in both cases, i.e.\
\begin{equation} \label{eq-pharm-energy}
\int_\Om g_u^p \,d\mu = \int_\Om g_{u,\eps}^p \,d\mu_\be.
\end{equation}
\end{prop}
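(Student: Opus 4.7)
The plan is to exploit Proposition~\ref{prop-energy+Sob-spc} with the specific relation $\be=p\eps$, which makes the exponential weight in \eqref{eq-comp-energy} disappear. From \eqref{eq-gu-gesp} and the definition $d\mu_\be=e^{-\be d(\cdot,z_0)}\,d\mu$, I would first record the pointwise identity of measures
\[
g_{u,\eps}(x)^p\,d\mu_\be(x)
= g_u(x)^p e^{(p\eps-\be)d(x,z_0)}\,d\mu(x)
= g_u(x)^p\,d\mu(x),
\]
which is valid precisely when $\be=p\eps$. Integrating over $\Om$ gives the energy equality \eqref{eq-pharm-energy}; integrating over an arbitrary measurable set $E\subset\Om$ gives the analogous identity with $E$ in place of $\Om$, which is exactly what is needed to transfer the variational inequality.

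To transfer \p-harmonicity itself, I would verify that each of the three ingredients in Definition~\ref{deff-pharm} is intrinsic to the two metric measure structures. Membership in $\Nploc(\Om)$ is common to both by Proposition~\ref{prop-energy+Sob-spc}\ref{a-Nploc}. Continuity is also common, since $d$ and $d_\eps$ induce the same topology on $X$: by Theorem~\ref{thm-subWhitney-balls} they are biLipschitz equivalent on every sufficiently small ball around any point of $X$. For the minimization inequality, the remark following Definition~\ref{deff-pharm} permits restriction to test functions $\phi\in\Np_0(\Om)$ with bounded support in $X$; such $\phi$ have support compactly contained in $\Om\subset X$, and on any bounded open neighbourhood $U\Subset\Om$ of the support, Proposition~\ref{prop-energy+Sob-spc}\ref{a-Np} identifies $\Np(U,d,\mu)$ with $\Np(U,d_\eps,\mu_\be)$, which yields the same class of admissible test functions in both settings.

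Applying the measurable-set identity above with $E=\{\phi\ne 0\}$ to $u$ and separately to $u+\phi$ then shows that \eqref{eq-def-p-harm} for $(d,\mu)$ is literally the same inequality as \eqref{eq-def-p-harm} for $(d_\eps,\mu_\be)$, completing the equivalence. The only technical item requiring care is that a $d_\eps$-bounded set need not be $d$-bounded, since $\diam_\eps X_\eps\le 2/\eps$ is finite while $X$ itself is unbounded; hence one must reduce to test functions compactly supported in $X$ (not merely bounded in $X_\eps$) before invoking Proposition~\ref{prop-energy+Sob-spc}\ref{a-Np}. Once this reduction is made, everything else follows mechanically from the cancellation $\be-p\eps=0$ in \eqref{eq-comp-energy}.
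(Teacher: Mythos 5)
Your proposal is correct and follows essentially the same route as the paper's proof: exploit the cancellation $\be-p\eps=0$ in \eqref{eq-comp-energy} (applied to $u$ and to $u+\phi$ over $\{\phi\ne0\}$), identify the local Newtonian spaces via Proposition~\ref{prop-energy+Sob-spc}, and reduce to test functions with bounded support using the remark after Definition~\ref{deff-pharm}. Your explicit caveat that "bounded support" must mean $d$-bounded (since every subset of $X_\eps$ is $d_\eps$-bounded) is a point the paper's proof passes over silently, and it is handled correctly here.
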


\begin{proof}
By Proposition~\ref{prop-energy+Sob-spc}\,\ref{a-Nploc},
$u\in\Np\loc(\Om,d,\mu)$ if and only if $u\in\Np\loc(\Om,d_\eps,\mu_\be)$.
Let $\phi$ be a function with bounded support in $X$.
Then $\phi\in \Np_0(\Om,d,\mu)$ if and only if
$\phi \in \Np_0(\Om,d_\eps,\mu_\be)$.
Thus \eqref{eq-comp-energy}, together with a similar identity for
the minimal \p-weak upper gradients of $u+\phi$, shows that
\[
\int_{\phi \ne 0} g_u^p\,d\mu \le \int_{\phi \ne 0} g_{u+\phi}^p\,d\mu
\quad \text{if and only if} \quad
\int_{\phi \ne 0} g_{u,\eps}^p \,d\mu_\be
\le \int_{\phi \ne 0} g_{u+\phi,\eps}^p \,d\mu_\be.
\]
It then follows from the discussion after
Definition~\ref{deff-pharm}
that $u$ is \p-harmonic with respect to $(d,\mu)$
if and only if it is \p-harmonic with respect to $(d_\eps,\mu_\be)$.
Moreover, \eqref{eq-pharm-energy} follows directly from
\eqref{eq-comp-energy}.
\end{proof}

\begin{thm}   \label{thm-nonconst-energy-iff-pos-cap}
Assume that $\mu$ is doubling and
supports a \p-Poincar\'e inequality,
both properties holding for balls of radii at most $R_0$.
Let $\be > \be_0$ and 
assume that $p=\be/\eps>1$.
Then the following are equivalent\/\textup{:}
\begin{enumerate}
\item \label{it-Liouville-fails}
There exists a nonconstant \p-harmonic function on $(X,d,\mu)$
with finite \p-energy, i.e.\ the finite-energy Liouville theorem fails for
$X$.
\item  \label{it-2-pos-cap-sets}
  There are two disjoint compact sets $K_1,K_2 \subset \bdy_\eps X$
  with positive $\CpXeps$ capacity.
\end{enumerate}
\end{thm}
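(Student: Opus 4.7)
The whole argument hinges on passing to the compact uniform space $\clXeps$, where classical potential theory applies cleanly. By Theorems~\ref{thm-comp-largest-Whitney} and~\ref{thm-PI-Xeps}, $(\clXeps,d_\eps,\mu_\be)$ is compact and geodesic with $\mu_\be$ globally doubling and supporting a global \p-Poincar\'e inequality; by Proposition~\ref{prop-pharm-d-deps} (using $p=\be/\eps$), a function is \p-harmonic on $(X,d,\mu)$ with finite \p-energy if and only if it is \p-harmonic on $(X_\eps,d_\eps,\mu_\be)$ with the same \p-energy. Thus the finite-energy Liouville question on $X$ becomes a Dirichlet problem question on $\clXeps$ with boundary data prescribed on $\bdy_\eps X$.

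\textbf{Direction \ref{it-2-pos-cap-sets}$\,\Rightarrow\,$\ref{it-Liouville-fails}.} Given disjoint compact $K_1,K_2\subset\bdy_\eps X$ of positive $\CpXeps$-capacity, their $d_\eps$-distance is strictly positive, so
\[
f(x)=\min\!\left\{1,\frac{\dist_\eps(x,K_1)}{\dist_\eps(K_1,K_2)}\right\}
\]
is a bounded Lipschitz function on $\clXeps$ satisfying $f\equiv 0$ on $K_1$ and $f\equiv 1$ on $K_2$; in particular $f\in\Np(\clXeps,\mu_\be)$. I would then minimize $v\mapsto\int g_v^p\,d\mu_\be$ over the convex admissible class $\mathcal{K}=\{v\in\Np(\clXeps):v-f\in\Np_0(\clXeps\setm(K_1\cup K_2))\}$. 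This functional is weakly lower semicontinuous and coercive on $\mathcal{K}$ (the boundary data is anchored by $\CpXeps(K_j)>0$, and the global \p-PI controls the $L^p$-norm modulo constants), so a minimizer $u\in\mathcal{K}$ exists. It is \p-harmonic on $\clXeps\setm(K_1\cup K_2)\supset X_\eps$, takes the values $0$ and $1$ quasieverywhere on $K_1$ and $K_2$ respectively (hence is nonconstant), and satisfies $\int g_u^p\,d\mu_\be\le\int g_f^p\,d\mu_\be<\infty$. Proposition~\ref{prop-pharm-d-deps} then returns the desired nonconstant \p-harmonic function on $(X,d,\mu)$ of finite \p-energy.

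\textbf{Direction \ref{it-Liouville-fails}$\,\Rightarrow\,$\ref{it-2-pos-cap-sets}.} Let $u$ be a nonconstant \p-harmonic function on $(X,d,\mu)$ with $\int g_u^p\,d\mu<\infty$. By Proposition~\ref{prop-pharm-d-deps}, $u$ is \p-harmonic on $(X_\eps,d_\eps,\mu_\be)$ with finite $\mu_\be$-energy. Using Lipschitz truncations together with the global \p-Poincar\'e inequality on the bounded space $\clXeps$, one sees that, after subtraction of a constant, $u\in\Np(\clXeps,\mu_\be)$, and hence $u$ admits a $\CpXeps$-quasieverywhere defined trace $u^*$ on $\bdy_\eps X$. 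Suppose, for contradiction, that $u^*\equiv a$ q.e.\ on $\bdy_\eps X$ for some $a\in\R$. Then $\phi:=a-u\in\Np_0(X_\eps)$, and substituting $\phi$ in Definition~\ref{deff-pharm} with $u+\phi\equiv a$ gives $\int_{\{\phi\ne 0\}}g_u^p\,d\mu_\be\le 0$; since $g_u=0$ a.e.\ on $\{u=a\}$ as well, connectedness of $X_\eps$ forces $u\equiv a$, contradicting nonconstancy. So $u^*$ is not q.e.\ constant on $\bdy_\eps X$; pick $s<t$ with $E_1:=\{u^*\le s\}$ and $E_2:=\{u^*\ge t\}$ both of positive $\CpXeps$-capacity, and by inner regularity of the Sobolev capacity under our doubling and \p-PI assumptions extract disjoint compact $K_j\subset E_j$ with $\CpXeps(K_j)>0$.

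\textbf{Main obstacle.} The technical heart is the forward direction: justifying that a \p-harmonic function on the unbounded space $X_\eps$ with merely finite \p-energy canonically extends (after constant adjustment) to an element of $\Np(\clXeps,\mu_\be)$ possessing a well-defined $\CpXeps$-q.e.\ trace on $\bdy_\eps X$, and that a q.e.-constant trace forces $u$ to be globally constant. This requires combining truncation arguments, the global \p-Poincar\'e inequality on the compact space $\clXeps$, the local annular quasiconvexity of $\clXeps$ near boundary points (Lemma~\ref{lem-annular}) together with the oscillation estimate of Lemma~\ref{lem-est-osc-energy}, and standard Newton--Sobolev trace theory for uniform domains. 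Once the trace is in hand, the Dirichlet/variational manipulations are routine.
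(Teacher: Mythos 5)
Your direction \ref{it-2-pos-cap-sets}$\,\Rightarrow\,$\ref{it-Liouville-fails} is essentially the paper's argument: both solve a Dirichlet problem on the uniformized space with boundary data separating $K_1$ from $K_2$ and invoke Proposition~\ref{prop-pharm-d-deps} to transfer back to $(X,d,\mu)$. (The paper quotes the existence of $H_pf$ and the Kellogg property rather than rerunning the direct method, and it solves the problem in $X_\eps$ rather than in $\clXeps\setm(K_1\cup K_2)$, but these are cosmetic differences. One small point you should make explicit: knowing $u=0$ q.e.\ on $K_1$ and $u=1$ q.e.\ on $K_2$ rules out $u$ being constant \emph{on $X_\eps$} because a function in $\Np(\clXeps)$ that is constant a.e.\ is constant q.e., and $\mu_\be(\bdy_\eps X)=0$.)

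For the converse your route is genuinely different, and it contains the one real gap. The paper proves the contrapositive: if \ref{it-2-pos-cap-sets} fails, then by Lemma~\ref{lem-Capp-char} the capacity on $\bdy_\eps X$ is carried by at most one point $a$; the zero-capacity part of the boundary is then \emph{removable} for \p-harmonic functions in $\Np(X_\eps)$, and near the single charged point the oscillation estimate of Lemma~\ref{lem-est-osc-energy} (via the annular quasiconvexity of Lemma~\ref{lem-annular}) shows $u$ extends continuously, after which the strong maximum principle on the compact connected set $\clXeps$ forces constancy. You instead pass through a q.e.-defined trace $u^*$ of $u$ on $\bdy_\eps X$ and the variational definition of \p-harmonicity. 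The step you label the ``main obstacle'' is indeed the crux and is not supplied by anything in the paper: the global Poincar\'e inequality only yields $u\in\Np(X_\eps,d_\eps,\mu_\be)$, and promoting this to $u\in\Np(\clXeps,d_\eps,\mu_\be)$ with a $\CpXeps$-q.e.\ defined, quasicontinuous boundary trace is a nontrivial extension theorem for Newtonian functions across a boundary of \emph{possibly positive} capacity (it is not covered by removability of null sets, and ``standard trace theory for uniform domains'' produces Besov-type traces with respect to a boundary measure, not q.e.-defined traces with respect to $\CpXeps$). Such an identification $\Np(X_\eps)=\Np(\clXeps)$ can be extracted from the machinery of \cite{BBnoncomp} (using that $\mu_\be(\bdy_\eps X)=0$, that $\clXeps$ is complete, doubling and supports a global \p-Poincar\'e inequality, and that \p-a.e.\ curve spends zero length in $\bdy_\eps X$), but as written you are assuming precisely the lemma your argument needs; the paper's detour through Lemma~\ref{lem-Capp-char} and the removability theorem exists exactly to avoid it. Once that extension theorem is granted, the rest of your argument (the $\phi=a-u\in\Np_0(X_\eps)$ substitution, the level-set selection of $E_1,E_2$, and the Choquet inner regularity to get compacta) is correct.
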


After proving the theorem we will give some
illustrating examples.
But first, before proving the theorem, we
will provide several useful characterizations of
the second condition~\ref{it-2-pos-cap-sets}.
The characterization \ref{j-pt},
applied  to the restriction of $\CpXeps$ to the boundary $\bdy_\eps X$, will be used 
in the proof of Theorem~\ref{thm-nonconst-energy-iff-pos-cap}.

\begin{lem} \label{lem-Capp-char}
Let $Z$ be a separable metric space,
and  $\Capp(\,\cdot\,)$ be  a monotone, countably subadditive set-function
with values in $[0,\infty)$, defined for all subsets of $Z$.
Assume that for each Borel set $E \subset Z$,
\begin{equation} \label{eq-Capp}
         \Capp(E)=\sup_K \Capp(K),
\end{equation}
where the supremum is taken over all compact subsets $K \subset E$.         

Define the \emph{support} of $\Capp$ as
\[
  \supp \Capp =
  \{x \in Z : \Capp(B(x,r))>0 \text{ for all } r >0\}.
\]

Then the following are equivalent\/\textup{:}
\begin{enumerate}
\item \label{j-cpt}
  There are two disjoint compact sets $K_1,K_2 \subset Z$
  such that $\Capp(K_1)>0$ and $\Capp(K_2)>0$.
\item \label{j-Borel}
  There is a Borel set $E \subset Z$ such that
  $\Capp(E)>0$ and $\Capp(Z \setm E)>0$.
\item \label{j-open}
  There is an open set $G \subset Z$ such that
  $\Capp(G)>0$ and $\Capp(Z \setm G)>0$.
\item \label{j-supp}
  The support $\supp \Capp$ contains at least two points.
\item \label{j-pt}
  $\Capp$ is not concentrated to one point, i.e.\
  $\Capp(Z \setm \{a\})>0$ for each $a \in Z$.
\end{enumerate}
\end{lem}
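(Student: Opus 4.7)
The plan is to establish the five conditions as equivalent by running a short cycle of implications
\ref{j-cpt}$\Rightarrow$\ref{j-Borel}$\Rightarrow$\ref{j-open}$\Rightarrow$\ref{j-supp}$\Rightarrow$\ref{j-cpt}, together with the auxiliary equivalence \ref{j-supp}$\Leftrightarrow$\ref{j-pt}. The two technical tools I would rely on are the inner regularity \eqref{eq-Capp} (used to pass between Borel sets and compact ones) and a Lindel\"of/separability argument, which I would isolate as a single claim.

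The easy implications come first. \ref{j-cpt}$\Rightarrow$\ref{j-Borel} is immediate upon setting $E=K_1$: the set $K_1$ is closed hence Borel, and $K_2\subset Z\setm K_1$ together with monotonicity yields $\Capp(Z\setm E)\ge\Capp(K_2)>0$. For \ref{j-Borel}$\Rightarrow$\ref{j-open}, I would apply \eqref{eq-Capp} to both $E$ and to the Borel set $Z\setm E$ to extract disjoint compact sets $K_1\subset E$ and $K_2\subset Z\setm E$ with $\Capp(K_i)>0$; since metric spaces are normal, there exist disjoint open sets $U_1\supset K_1$ and $U_2\supset K_2$, and $G:=U_1$ satisfies $\Capp(G)\ge\Capp(K_1)>0$ together with $\Capp(Z\setm G)\ge\Capp(K_2)>0$.

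The core of the proof is the claim that \emph{if $A\subset Z$ satisfies $A\cap\supp\Capp=\emptyset$, then $\Capp(A)=0$}. Its proof is that every $x\in A$ admits a radius $r_x>0$ with $\Capp(B(x,r_x))=0$; since $A$, as a subspace of the separable metric space $Z$, is itself separable and hence Lindel\"of, countably many of these balls already cover $A$, and countable subadditivity gives $\Capp(A)=0$. Using this claim, \ref{j-open}$\Rightarrow$\ref{j-supp} follows because neither $G$ nor $Z\setm G$ may be disjoint from $\supp\Capp$, so $\supp\Capp$ must contain points on both sides of $G$, in particular two distinct points. The equivalence \ref{j-supp}$\Leftrightarrow$\ref{j-pt} also drops out: if $\supp\Capp\subset\{a\}$ then the claim applied to $A=Z\setm\{a\}$ gives $\Capp(Z\setm\{a\})=0$, contradicting \ref{j-pt}; conversely, if $a,b\in\supp\Capp$ are distinct and $c\in Z$, then for $r=\tfrac12\min\{d(a,c),d(b,c)\}$ at least one of $B(a,r)$, $B(b,r)$ is disjoint from $\{c\}$, and it has positive $\Capp$ by definition of $\supp\Capp$, so $\Capp(Z\setm\{c\})>0$.

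To close the cycle, I would prove \ref{j-supp}$\Rightarrow$\ref{j-cpt} as follows: pick distinct $a,b\in\supp\Capp$ and let $r=d(a,b)/2$; the disjoint open balls $B(a,r)$ and $B(b,r)$ have positive $\Capp$ directly from the definition of the support, and since they are open hence Borel, applying \eqref{eq-Capp} to each yields compact subsets $K_1\subset B(a,r)$, $K_2\subset B(b,r)$ with $\Capp(K_i)>0$; they are disjoint since the enclosing balls are. I do not expect a serious conceptual obstacle; the main subtle step is the Lindel\"of argument in the claim, where one must remember to use that subspaces of separable metric spaces are themselves separable (and hence Lindel\"of), so that the uncountable cover by zero-capacity balls can be thinned to a countable one.
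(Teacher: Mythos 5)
Your proof is correct and rests on the same two ingredients as the paper's: inner regularity \eqref{eq-Capp} to pass from Borel sets to compact subsets of positive capacity, and the Lindel\"of/countable-subadditivity argument showing that any set disjoint from $\supp\Capp$ is $\Capp$-null (the paper states this as $\Capp(Z\setm\supp\Capp)=0$; your slightly more general claim follows from it by monotonicity, and is proved the same way). The only structural difference is the arrangement of the implications: the paper runs \ref{j-Borel}$\Rightarrow$\ref{j-cpt}$\Rightarrow$\ref{j-open}$\Rightarrow$\ref{j-Borel} together with \ref{j-Borel}$\Rightarrow$\ref{j-pt}$\Rightarrow$\ref{j-supp}$\Rightarrow$\ref{j-open}, whereas you run a single cycle through \ref{j-cpt}, \ref{j-Borel}, \ref{j-open}, \ref{j-supp} and append the equivalence \ref{j-supp}$\Leftrightarrow$\ref{j-pt}; this is a cosmetic difference, and each individual implication you prove is argued essentially as in the paper (e.g.\ your \ref{j-supp}$\Rightarrow$\ref{j-cpt} via the two disjoint balls $B(a,\frac12 d(a,b))$, $B(b,\frac12 d(a,b))$ and \eqref{eq-Capp} is the paper's \ref{j-supp}$\Rightarrow$\ref{j-open} followed by its \ref{j-Borel}$\Rightarrow$\ref{j-cpt}).

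One small slip: in your argument for \ref{j-supp}$\Rightarrow$\ref{j-pt} you set $r=\tfrac12\min\{d(a,c),d(b,c)\}$, which equals $0$ when $c\in\{a,b\}$, so in that case neither ball is a legitimate witness from the definition of $\supp\Capp$. The fix is immediate: since $a\ne b$, at least one of them, say $a$, satisfies $a\ne c$, and then $B(a,\tfrac12 d(a,c))$ has positive radius, has positive capacity because $a\in\supp\Capp$, and is contained in $Z\setm\{c\}$, so $\Capp(Z\setm\{c\})>0$. With this one-line repair the proof is complete.
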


If $Y$ is equipped with a
globally doubling measure $\nu$ supporting a
global \p-Poincar\'e
inequality and $p>1$, then $\CpY$ is a Choquet capacity,
by \cite[Theorem~6.11]{BBbook},
and thus satisfies the assumptions above. 
Hence the assumptions are also satisfied for any 
restriction of $\CpY$ to any closed subset of $Y$ as well.
Example~6.6 in \cite{BBbook} shows that \eqref{eq-Capp}
can fail if $p=1$.

The assumption \eqref{eq-Capp} is only needed
to establish the equivalence of \ref{j-cpt} and \ref{j-Borel}.
On the other hand, separability is only used to deduce
the identity \eqref{eq-supp} below, which in turn
is used to show the equivalence of \ref{j-Borel}--\ref{j-pt}.

\begin{proof}
We start by showing that 
\begin{equation} \label{eq-supp}
    \Capp(Z \setm \supp \Capp)=0.
\end{equation}
To this end, for each $x \in Z \setm \supp \Capp$ there is $r_x>0$ so that
$\Capp(B(x,r_x))=0$.
As $Z$ is Lindel\"of (which for metric spaces is equivalent
to separability, see e.g.\ \cite[Proposition~1.5]{BBbook}),
we can write $Z \setm \supp \Capp$ as a countable union of
such balls, each of which has zero capacity.
Hence the countable subadditivity shows that \eqref{eq-supp} holds.

Now we are ready to prove the equivalences of \ref{j-cpt}--\ref{j-pt}.

  \ref{j-Borel} $\imp$ \ref{j-cpt}
By \eqref{eq-Capp} there are $K_1 \subset E$ and
$K_2 \subset Z \setm E$ such that $\Capp(K_1)>0$ and $\Capp(K_2)>0$.

\ref{j-cpt} $\imp$ \ref{j-open} $\imp$ \ref{j-Borel}
  These implications are trivial.

\ref{j-Borel} $\imp$ \ref{j-pt}
Let $a \in Z$.
If $a \in E$, then
$\Capp(Z \setm \{a\}) \ge \Capp(Z \setm E)>0$.
Similarly, if $a \notin E$, then
$\Capp(Z \setm \{a\}) \ge \Capp(E)>0$.

\ref{j-pt} $\imp$ \ref{j-supp}
As $\Capp(Z \setm \{a\})>0$ for each $a \in Z$,
it follows from \eqref{eq-supp}
that $\supp \Capp$ is nonempty.
Let $a \in \supp \Capp$.
As again $\Capp(Z \setm \{a\})>0$, and \eqref{eq-supp} holds, there
is $b \in \supp \Capp \setm \{a\}$.

\ref{j-supp} $\imp$ \ref{j-open}
Let $a,b \in \supp \Capp$, $a \ne b$,
and then let $G=B(a,\frac12 d(a,b))$.
Thus $\Capp(G)>0$ as $a \in \supp \Capp$,
while $\Capp(Z \setm G) \ge \Capp(B(b,\frac12 d(a,b)))>0$
since $b \in \supp \Capp$.
\end{proof}

\begin{proof}[Proof of Theorem~\ref{thm-nonconst-energy-iff-pos-cap}]
By Theorem~\ref{thm-PI-Xeps}, 
the uniformized space $(X_\eps,d_\eps,\mu_\be)$, as well as its closure $\clXeps$,
supports  a global \p-Poincar\'e inequality and $\mu_\be$ is
globally doubling.
Moreover, $\clXeps$ is complete.
It thus follows from \cite[Theorem~6.11]{BBbook},
that $\CpXeps$ is a Choquet capacity, and in particular
satisfies the assumptions in Lemma~\ref{lem-Capp-char},
and so does its restriction to $\bdy_\eps X$.

\ref{it-2-pos-cap-sets} $\imp$ \ref{it-Liouville-fails}
Let $f(x):= \dist_\eps(x,K_1)$. 
Since $\clXeps$ is bounded, we have $f\in\Np(\clXeps)$ and hence
there exists a \p-harmonic function $u$ in $X_\eps$ such that
$u-f\in\Np_0(X_\eps)$, see Shanmugalingam~\cite[Theorem~5.6]{Sh-harm}
(or \cite[Theorem~8.28 and Definition~8.31]{BBbook}). 
The function $u$ is denoted 
$H_pf$ in Bj\"orn--Bj\"orn--Shanmugalingam~\cite{BBS}
(and $Hf$ in \cite{BBbook}).
By the Kellogg property (\cite[Theorem~3.9]{BBS}
or \cite[Theorem~10.5]{BBbook}),
we have 
$\lim_{X_\eps \ni y\to x} u(y) = f(x)$ on $\bdy_\eps X$, except possibly
for a set of zero $\CpXeps$-capacity. 
Consequently, as $f= 0$ on $K_1$, $f>0$ on $K_2$ and both
$K_1$ and $K_2$ have positive $\CpXeps$-capacity,
$u$ must be nonconstant on $X_\eps$.

Proposition~\ref{prop-pharm-d-deps} implies that
$u\in\Np\loc(X,d,\mu) $ is \p-harmonic in $X$ with respect to
$(d,\mu)$ as well, and from~\eqref{eq-comp-energy} with $\be=p\eps$ 
it follows that
\[ 
\int_{X} g_u^p\,d\mu  = \int_{X_\eps} g_{u,\eps}^p  \,d\mu_\be 
\le \|u\|^p_{\Np(X_\eps,d_\eps,\mu_\be)} < \infty. 
\]

$\neg$ \ref{it-2-pos-cap-sets}    $\imp$ $\neg$ \ref{it-Liouville-fails}
By Lemma~\ref{lem-Capp-char}, there
is $a \in \bdy_\eps X$ such that $\CpXeps(\bdy_\eps X \setm \{a\})=0$.
The capacity of $\{a\}$ can be zero or positive.

Let $u$ be a \p-harmonic function in $(X,d,\mu)$  with finite \p-energy. 
Then $u$ is also \p-harmonic on $(X_\eps,d_\eps,\mu_\be)$
with finite \p-energy, by Proposition~\ref{prop-pharm-d-deps}. 
Applying the global \p-Poincar\'e inequality  to the ball
$B_\eps(x_0,2\diam_\eps X)\cap X_\eps=X_\eps$,
  with an arbitrary $x_0\in X$,
shows that $u\in \Np(X_\eps)$,
cf.\ \cite[Proposition~4.13\,(d)]{BBbook}.

If $\bdy_\eps X$ has zero $\CpXeps$-capacity
then it is removable for \p-harmonic functions in
$\Np(X_\eps)$,
by  Theorem~6.2 in Bj\"orn~\cite{ABremove} (or \cite[Theorem~12.2]{BBbook}).
Hence, an extension of $u$ is \p-harmonic on the
compact connected set $\clXeps$ and is thus constant by the
strong maximum principle. 

Finally, assume that $\CpXeps(\{a\})>0$.
Then $E:=\bdy_\eps X\setm\{a\}$ has zero capacity
and is  thus removable for \p-harmonic functions in $\Np(X_\eps)$, 
by  \cite[Theorem~6.2]{ABremove} (or \cite[Theorem~12.2]{BBbook}).
Since $u\in\Np(X_\eps)$, it follows that
an extension of $u$ is \p-harmonic 
in the open set $\clXeps\setm\{a\}=X_\eps\cup E$. 
By Lemma~\ref{lem-annular}, we know that
$X_\eps\cup E$ is annularly quasiconvex at $a$. 
Since $\CpXeps(\{a\})>0$,
it is also true that
$\cpXeps(\{a\},B_\eps(a,\rho))>0$ if $\rho < \frac{1}{4} \diam_\eps X_\eps$,
by e.g.\ \cite[Proposition~6.16]{BBbook}.
Moreover, $\clXeps$ is connected.
Thus Lemma~\ref{lem-est-osc-energy}, 
together with the remark after it, implies that for sufficiently small $r>0$,
\begin{equation*}   
\osc_{B_\eps(a,2r)\setm \{a\}} u 
 \simle \biggl( \int_{B_\eps(a,2\La r)} g_{u,\eps}^p\, d\mu_\be \biggr)^{1/p}.
\end{equation*}
Since $g_{u,\eps}\in L^p(\clXeps,\mu_\be)$, the last integral tends
to 0 as $r\to0$ and we conclude that 
$\lim_{\clXeps\setm\{a\}\ni y\to a}u(y)$ exists. 
In particular, $u$ is bounded on the compact set $\clXeps$. 
Finally, the strong maximum principle for \p-harmonic
functions on $\clXeps\setm\{a\}$ shows that $u$ must be constant on
$\clXeps$.
\end{proof}

\begin{example} (Continuation of Example~\ref{ex:R-to-I}.)
We have $C_d=2$, and all choices of $R_0$ are acceptable. Hence
any $\eps,\be>0$ are allowed.
Fixing $\eps>0$ and $1<p<\infty$
and choosing
$\be=p\eps$, we see that the weight in \eqref{eq-mu-be-z} becomes
\[
w(z)=\eps^{-1+\beta/\eps}(1/\eps-|z|)^{-1+\beta/\eps}=\eps^{p-1}(1/\eps-|z|)^{p-1}.
\]
By considering the functions
\[
u_j(z)=\begin{cases}
    \displaystyle
   \min\biggl\{1, \frac{1}{j}\log \frac{1}{1-\eps |z|} \biggr\}, &
     \displaystyle
  \text{if } |z| < \frac{1}{\eps}, \\[3mm]
   1, & 
    \displaystyle
   \text{if } |z| = \frac{1}{\eps},
   \end{cases}
\]
for which $\|u_j\|_{\Np(X_\eps,\mu_\be)} \to 0$, as $j \to \infty$,
we see that $\CpXeps(\{\pm 1/\eps\})=0$.

Note that $\R$ does not admit any nonconstant \p-harmonic function with finite
\p-energy.
\end{example}

\begin{example}\label{ex:weight-strip-to-diamond}
Consider $X=\R\times[-1,1]$, which is a Gromov hyperbolic space when equipped with the Euclidean
metric. 
We equip $X$ with a weighted measure 
\[
d\mu(x,y)=w(x,y)\, d\Ltwo(x,y)
\] 
such that
$(X,\mu)$ is uniformly locally doubling and supports a uniformly local \p-Poincar\'e inequality.
Fixing $z_0=(0,0)$,
the uniformization with $\eps=1$ gives a uniform domain
$X_1$ such that $\partial_1X$ consists of two points. 

To understand the potential theory and geometry
of $X_1$ near these two points, consider $z=(x,y)\in X$ such that $x\gg1$. 
Then, with $d_1$ denoting  the uniformized metric on $X_1$, we have
\[
d_1(z,z_0)\approx \int_0^x e^{-t}\, dt=1-e^{-x}
\quad \text{and} \quad
d_1((x,-1),(x,1))\approx e^{-x}.
\]
Here by $d_1(z,z_0)\approx 1-e^{-x}$ we mean that 
$d_1(z,z_0)/(1-e^{-x})\to 1$ as $x\to\infty$.
Thus, near the two boundary points, $X_1$ is (biLipschitz equivalent to)  
the diamond region in $\R^2$ with corners
$(\pm1,0)$ and $(0,\pm 1)$.
The two boundary points of $X_1$ are $\zpm:=(\pm 1,0)$.

Let  $\be =p >1$ and let
$\mu_\beta$ be the weighted measure
on $X_1$, given by  Definition~\ref{def-muh-beta}.
By Theorem~\ref{thm-nonconst-energy-iff-pos-cap},
$X$ supports a 
nonconstant \p-harmonic function with finite \p-energy if and only if
both boundary points $\zpm$ have positive
$\CpXone$-capacity.
By Bj\"orn--Bj\"orn--Lehrb\"ack~\cite[Proposition~5.3]{BBLintgreen},
$\CpXone(\{\zplus\})>0$ if and only if
\[
\int_0^{r_0} \biggl(\frac{r^p}{\mu_\be(B_1(\zplus,r))}\biggr)^{1/(p-1)}
\frac{dr}{r}<\infty
\]
for some (all) sufficiently small $r_0$,
where the balls $B_1(\zplus,r)$ are with respect to the metric $d_1$.
By the global doubling property of $\mu_\be$ we see that
\[
\mu_\be(B_1(\zplus,r))\simeq \mu_\be(B_1(\zplus,r)
\setminus B_1(\zplus,\tfrac12 r)).
\]
In view of~\eqref{eq-BHK-d-rho}, each of these annuli is (roughly) the
image of a rectangular region with fixed size and at distance
approximately $\log(1/r)$ from the base point $z_0$.
Letting $Q(t)=[t-1,t+1]\times[-1,1]$, we therefore have
\[
\mu_\be(B_1(\zplus,r)) \simeq \int_{Q(\log(1/r))} e^{-\beta x}\, w(x,y)\, d\Ltwo(x,y).
\]
Since $e^{-\beta x} \simeq r^\beta$ on $Q(\log(1/r))$,
we therefore conclude that
$\CpXone(\{\zplus\})>0$ if and only if
\[
\int_0^{r_0} \biggl( \int_{Q(\log(1/r))} w\,d\Ltwo \biggr)^{1/(1-p)} 
   \frac{dr}{r}
<\infty, 
\]
or equivalently,
\begin{equation} \label{eq-char-liouv-strip}
\int_{0}^\infty \biggl( \int_{Q(t)} w\,d\Ltwo \biggr)^{1/(1-p)} \,dt 
<\infty.
\end{equation}
An analogous condition holds for $\zminus$.

Note that when $w\equiv1$, both \eqref{eq-char-liouv-strip} and its
analogue for $\zminus$ fail, showing that the unweighted strip
$\R\times[-1,1]$ satisfies the finite-energy Liouville theorem.
This special case was obtained in
Bj\"orn--Bj\"orn--Shanmugalingam~\cite{BBSliouville}
by a more direct method, without the use of uniformization.
\end{example}

\begin{remark}
The weighted Euclidean real line $(\R,\mu)$, where $d\mu=w\,dx$
is uniformly locally doubling and supports a uniformly local
\p-Poincar\'e inequality, can be treated similarly and we obtain that
$(\R,\mu)$ supports nonconstant \p-harmonic functions with finite
energy if and only if a condition similar to
\eqref{eq-char-liouv-strip} holds on it:
\begin{equation}  \label{eq-cond-R}
\int_0^\infty 
\biggl( \int_{t-1}^{t+1} w(x)\,dx \biggr)
^{1/(1-p)} \,dt  <\infty.
\end{equation}
In~\cite{BBSliouville}, this question
was studied by different methods and under local assumptions on $w$.
It follows from the results in 
Bj\"orn--Bj\"orn--Shanmugalingam~\cite{BBSpadm} on local $A_p$
weights, that the condition
\[
\int_0^\infty w(x)^{1/(1-p)}\,dx <\infty,
\]
obtained in~\cite{BBSliouville}, is equivalent to \eqref{eq-cond-R}
under the local assumptions on $w$.
\end{remark}

We end the paper with the following result which is a
direct consequence of
Theorem~\ref{thm-PI-Xeps} together with
Lemmas~\ref{lem-s-on-bdy} and~\ref{lem-hausdim-cap}. 
In combination with  Theorem~\ref{thm-nonconst-energy-iff-pos-cap},
it provides a sufficient condition for the existence of nonconstant
\p-harmonic functions on $(X,d,\mu)$ with finite \p-energy.
Note that the Hausdorff dimension depends only on $\eps$,
$C_d$ and $R_0$, but not on $\be$ or $p$.

\begin{prop} \label{prop-Hausdorff-Xeps}
Assume that $\mu$ is doubling and
supports a \p-Poincar\'e inequality,
both properties holding for balls of radii at most $R_0$.
Let $\be > \be_0$.
Assume that the Borel set $E\subset \bdy_\eps X$
has positive $\kappa$-dimensional Hausdorff
measure for some $\kappa>(\log C_d)/\eps R_0$.
If $p=\be/\eps \ge 1$, then $\CpXeps(E)>0$.
\end{prop}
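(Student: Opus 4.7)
The plan is to simply assemble the three ingredients that the paper itself points to: the global doubling and Poincar\'e structure on $\clXeps$ from Theorem~\ref{thm-PI-Xeps}, the measure lower bound at boundary points from Lemma~\ref{lem-s-on-bdy}, and the Hausdorff-to-capacity criterion from Lemma~\ref{lem-hausdim-cap}. There is essentially no new work; the only thing to be careful about is matching the exponents correctly.

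First, note that under the hypotheses Theorem~\ref{thm-PI-Xeps} applies, so $(\clXeps,d_\eps,\mu_\be)$ is a complete metric space carrying a globally doubling Borel measure that supports a global \p-Poincar\'e inequality. Thus the ambient hypotheses of Lemma~\ref{lem-hausdim-cap} (with $Y=\clXeps$ and $\nu=\mu_\be$) are satisfied.

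Next, I would apply Lemma~\ref{lem-s-on-bdy} with $r'=2\diam_\eps X_\eps$ fixed. Since $\mu_\be(B_\eps(\xi,r'))=\mu_\be(\clXeps)$ is a positive constant independent of $\xi\in\bdy_\eps X$, the lemma yields
\[
\mu_\be(B_\eps(\xi,r)) \simge r^{s_\limplus}
\quad\text{for all } \xi\in\bdy_\eps X,\ 0<r\le 2\diam_\eps X_\eps,
\]
where
\[
s_\limplus = \frac{\be}{\eps} + \frac{\log C_d}{\eps R_0}.
\]
In particular, the pointwise lower bound $\mu_\be(B_\eps(x,r))\ge Cr^{s_\limplus}$ required by the hypothesis~\eqref{eq-assume-s} of Lemma~\ref{lem-hausdim-cap} holds for every $x\in E\subset\bdy_\eps X$ (with $r_0=2\diam_\eps X_\eps$).

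Finally, I would invoke Lemma~\ref{lem-hausdim-cap} with $s=s_\limplus$, and with the given $\kappa$. Since $p=\be/\eps$, the inequality $p>s_\limplus-\kappa$ rearranges to
\[
\kappa > s_\limplus - p = \frac{\log C_d}{\eps R_0},
\]
which is exactly the standing assumption on $\kappa$. Therefore Lemma~\ref{lem-hausdim-cap} gives $\CpXeps(E)>0$, completing the proof. There is no real obstacle; the only care required is in verifying that subtracting $p=\be/\eps$ from $s_\limplus$ leaves precisely the term $(\log C_d)/\eps R_0$ appearing in the hypothesis, which explains why the Hausdorff dimension threshold is independent of $\be$ and $p$ as remarked by the authors.
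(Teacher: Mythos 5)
Your proposal is correct and is exactly the argument the paper intends: the authors state the proposition as "a direct consequence of Theorem~\ref{thm-PI-Xeps} together with Lemmas~\ref{lem-s-on-bdy} and~\ref{lem-hausdim-cap}," and you have filled in the details faithfully. In particular, your use of the lower bound in Lemma~\ref{lem-s-on-bdy} with $r'=2\diam_\eps X_\eps$ to verify hypothesis~\eqref{eq-assume-s} with $s=s_\limplus$, and the exponent check $p>s_\limplus-\kappa \Leftrightarrow \kappa>(\log C_d)/\eps R_0$, are precisely what is needed.
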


\end{document}